\theoremstyle{plain}
\newtheorem{lem}{Lemma}[section]
\newtheorem{theo}[lem]{Theorem}
\newtheorem{prop}[lem]{Proposition}
\newtheorem{cor}[lem]{Corollary}
\newtheorem{remark}[lem]{Remark}
\newtheorem{conjecture}[lem]{Conjecture}
\font\k=cmr7
\font\rm=cmr12
  \newcommand {\di}{\mbox{\k dis}}
  \newcommand {\fin}{{\mbox{\k fin}}}
  \newcommand {\sico}{\mbox{\k sc}}
  \newcommand{\unip}{\operatorname{unip}}
  \newcommand {\reg}{\text{reg}}
  \newcommand {\spec}{\text{spec}}
  \newcommand {\geo}{\mbox{\k geom}}
  \newcommand {\temp}{\mbox{\k temp}}
  \newcommand {\tri}{\mbox{\k tri}}
  \newcommand {\C}{{\mathbb C}}
  \newcommand {\N}{{\mathbb N}}
  \newcommand {\R}{{\mathbb R}}
  \newcommand {\Z}{{\mathbb Z}}
  \newcommand {\Q}{{\mathbb Q}}
  \newcommand {\A}{{\mathbb A}}
  \newcommand{\bG}{G}
  \newcommand{\bK}{{\bf K}}
  \newcommand {\af}{{\mathfrak a}}
  \newcommand {\gf}{{\mathfrak g}}
  \newcommand {\kf}{{\mathfrak k}}
  \newcommand {\of}{{\mathfrak o}}
  \newcommand {\nf}{{\mathfrak n}}
  \newcommand {\lf}{{\mathfrak l}}
  \newcommand {\uf}{{\mathfrak u}}
  \newcommand {\pg}{{\mathfrak p}}
   \newcommand {\pf}{{\mathfrak p}}
\renewcommand {\H}{{\mathcal H}}
  \newcommand {\M}{{\mathcal M}}
  \newcommand {\CmF}{{\mathcal F}}
  \newcommand {\Co}{{\mathcal C}}
 \newcommand {\cO}{{\mathcal O}}
\newcommand {\cF}{{\mathcal F}}
  \newcommand {\G}{G}
\newcommand {\CmL}{{\mathcal L}}
 \newcommand {\cH}{{\mathcal H}}
 \newcommand {\cP}{{\mathcal P}}
 \newcommand {\cL}{{\mathcal L}}
 \newcommand {\cA}{{\mathcal A}}
\newcommand {\cT}{{\mathcal T}}
\newcommand  {\cZ}{{\mathcal Z}}
\newcommand {\bs}{\backslash}
 \newcommand {\ov}{\overline}
\newcommand{\levis}{{\mathcal L}}
\newcommand{\Ai}{A_M(\R)^0}
\renewcommand{\Im}{\operatorname{Im}}
\renewcommand{\Re}{\operatorname{Re}}
\newcommand{\Tr}{\operatorname{Tr}}
\newcommand{\End}{\operatorname{End}}
\newcommand{\tr}{\operatorname{tr}}
\newcommand{\Id}{\operatorname{Id}}
\newcommand{\Hom}{\operatorname{Hom}}
\newcommand{\Ind}{\operatorname{Ind}}
\newcommand{\inj}{\operatorname{inj}}
\newcommand{\proj}{\operatorname{proj}}
\newcommand{\rk}{\operatorname{rank}}
\newcommand{\vol}{\operatorname{vol}}
\newcommand{\SL}{\operatorname{SL}}
\newcommand{\GL}{\operatorname{GL}}
\newcommand{\SO}{\operatorname{SO}}
\newcommand{\Ad}{\operatorname{Ad}}
\newcommand{\rO}{\operatorname{O}}
\newcommand{\supp}{\operatorname{supp}}
\renewcommand{\det}{\operatorname{det}}
\newcommand{\Rep}{\operatorname{Rep}}
\newcommand{\Liesl}{\mathfrak{sl}}
\newcommand{\Liegl}{\mathfrak{gl}}
\newcommand{\norm}[1]{\lVert#1\rVert}
\newcommand{\abs}[1]{\lvert#1\rvert}
\newcommand{\eps}{\epsilon}
\newcommand{\one}{\mathbf 1}
\newcommand{\oP}{\overline P}
\newcommand{\aaa}{\mathfrak{a}}
  \newcommand {\K}{{\bf K}}
  \newcommand {\bU}{{\bf U}}
  \newcommand{\Ht}{H}
\newcommand{\sprod}[2]{\left\langle#1,#2\right\rangle}
\newcommand{\PPP}{\mathcal{P}}
\newcommand{\FFF}{{\mathcal F}}
\newcommand{\rts}{\Sigma}
\newcommand{\disc}{\operatorname{disc}}
\newcommand{\srts}{\Delta}
\newcommand{\modulus}{\delta}
\newcommand{\AF}{{\mathcal A}}
\newcommand{\zzz}{\mathfrak{Z}}
\newcommand{\iii}{{\mathrm i}}
\newcommand{\LieG}{\mathfrak{g}}
\newcommand{\bases}{\mathfrak{B}}
\newcommand{\bss}{\underline{\beta}}
\newcommand{\dtup}{\mathcal{X}}
\newcommand{\card}[1]{\lvert#1\rvert}
\newcommand{\ka}{\mathfrak{a}}
\newcommand{\kg}{\mathfrak{g}}
\newcommand{\kh}{\mathfrak{h}}
\newcommand{\kk}{\mathfrak{k}}
\newcommand{\ks}{\mathfrak{s}}
\newcommand{\kq}{\mathfrak{q}}
\newcommand{\kn}{\mathfrak{n}}
\newcommand{\CmP}{\mathcal{P}}
\newcommand{\CmU}{\mathcal{U}}
\newcommand{\Mid}{\operatorname{id}}
\newcommand{\CmO}{\mathcal{O}}
\newcommand{\CmN}{\mathcal{N}}
\newcommand{\km}{\mathfrak{m}}
\newcommand{\kv}{\mathfrak{v}}
\newcommand{\cpt}{\mathbf{K}}
\newcommand{\One}{\mathbf{1}}
\newcommand{\level}{\operatorname{level}}
\newcommand{\FP}{\operatorname{FP}}
\newcommand{\wX}{\widetilde{X}}
\newcommand{\1}{{\bf 1}}
\begin{document}

\title[]
{Analytic torsion for arithmetic locally symmetric manifolds and approximation of $L^2$-torsion}
\date{\today}

\author{Jasmin Matz}
\thanks{Second author partially supported by the Israel Science Foundation (grant no.\ 1676/17)}
\address{Department of Mathematical Science, University of Copenhagen, Universitetsparken 5, DK-2100 Copenhagen, Denmark}
\email{matz@math.ku.dk}

\author{Werner M\"uller}
\address{Universit\"at Bonn\\
Mathematisches Institut\\
Endenicher Allee 60\\
D -- 53115 Bonn, Germany}
\email{mueller@math.uni-bonn.de}

\keywords{analytic torsion, locally symmetric spaces}
\subjclass{Primary: 58J52, Secondary: 11M36}

\begin{abstract}
In this paper we define a regularized version of the analytic torsion for 
quotients of a symmetric space of non-positive curvature by arithmetic lattices.
The definition is based on
the study of the renormalized trace of the corresponding heat operators, which 
is defined as the geometric side of the Arthur trace formula applied to the 
heat kernel. Then we  study the limiting behavior of the analytic torsion as the
lattices run through a sequence of congruence subgroups of a fixed arithmetic
subgroup. Our main result states that for sequences of principal congruence
subgroups, which converge to 1 at a fixed finite set of places and
strongly acyclic flat bundles, the logarithm of the analytic torsion, divided
by the index of the subgroup, converges to the $L^2$-analytic torsion.

\end{abstract}

\maketitle
\setcounter{tocdepth}{1}
\tableofcontents

\section{Introduction}

The main purpose of this paper is to define a regularized analytic torsion for
locally symmetric manifolds of finite volume which are quotients of a symmetric
space of non-positive curvature by an arithmetic group and to study the
limiting behavior of the renormalized analytic torsion for
sequences of finite coverings of a given locally symmetric space.
 This is motivated 
by the applications of the Ray-Singer analytic torsion \cite{RS} of a compact
manifold to the study of the growth of torsion in the cohomology of cocompact
arithmetic groups by Bergeron and Venkatesh \cite{BV}. Other applications
of a similar nature are given in  \cite{MaM}, \cite{MP4}. Since many important 
arithmetic groups are not cocompact, it is very desirable to extend these 
results to non-cocompact lattices. There exist some results for hyperbolic
manifolds of finite volume \cite{PR}, \cite{Ra1}, \cite{Ra2}, \cite{MR2}.

\subsection{Analytic torsion}
To explain the approach in more detail, we briefly recall the definition of 
the Ray-Singer
analytic torsion for a compact Riemannian manifold $X$ of dimension $n$. Let
$\rho\colon\pi_1(X)\to \GL(V)$ be a finite dimensional complex representation 
of the
fundamental group of $X$ let $E_\rho\to X$ be the flat vector bundle associated
to $\rho$. Choose a Hermitian fiber metric in $E_\rho$. Let $\Delta_p(\rho)$
be the Laplace operator on $E_\rho$-valued $p$-forms with respect to the metrics
on $X$ and in $E_\rho$. It is an elliptic differential operator, which is 
formally self-adjoint and non-nagative. Let 
$h_p(\rho):=\dim\ker\Delta_p(\rho)$. Using the trace of the
heat operator $e^{-t\Delta_p(\rho)}$, the zeta function $\zeta_p(s;\rho)$ 
of $\Delta_p(\rho)$ (see \cite{Sh}) can be defined by
\begin{equation}\label{zeta-fct}
\zeta_p(s;\rho):= \frac{1}{\Gamma(s)}\int_0^\infty 
\left(\Tr\left(e^{-t\Delta_p(\rho)}\right)-h_p(\rho)\right)t^{s-1} \;dt.
\end{equation} 
The integral converges for $\Re(s)>n/2$ and admits a meromorphic extension to
the whole complex plane, which is holomorphic at $s=0$. 
Then the analytic torsion $T_X(\rho)\in\R^+$, introduced by Ray and Singer 
\cite{RS}, is defined by
\begin{equation}\label{analtor0}
\log T_X(\rho)=\frac{1}{2}\sum_{p=1}^d (-1)^p p 
\frac{d}{ds}\zeta_p(s;\rho)\big|_{s=0}.
\end{equation}
Bergeron and Venkatesh \cite{BV} have applied the analytic torsion in the case 
of a locally symmetric space $X:=\Gamma\bs G/K$, where $G$ is a semisimple 
Lie group of non-compact type, $K$ a maximal compact subgroup and 
$\Gamma\subset G$ a torsion free, cocompact discrete subgroup. The 
representations $\rho$ of $\Gamma$ considered in this case, are of the form 
$\rho:=\tau|_\Gamma$, where $\tau$ is an irreducible finite-dimensional complex 
representation of $G$. Denote by $T_X(\tau)$ (resp. $T^{(2)}_X(\tau)$)
the analytic torsion (resp. the $L^2$-torsion \cite{Lo}, \cite{MV}) taken with 
respect to $\tau|_\Gamma$.
 Let $\wX:=G/K$ be the universal covering of $X$. Since the
heat kernels of the corresponding Laplacians on $\wX$ are $G$-invariant, one has
\begin{equation}\label{l2-tor}
\log T^{(2)}_X(\tau)=\vol(X) t^{(2)}_{\widetilde X}(\tau),
\end{equation}
where $t^{(2)}_{\widetilde X}(\tau)$ is a constant that depends only on 
$\widetilde X$ and $\tau$. One of the main results of \cite{BV} is the 
approximation of the $L^2$-torsion by the renormalized analytic torsion of
finite coverings of $X$ under a certain non-degeneracy condition on $\tau$.
Representations which satisfy this condition are called 
{\it strongly acyclic}. Let $X_i\to X$, $i\in\N$, be a sequence of finite coverings of $X$. 
Let $\tau$ be strongly acyclic. Let $\inj(X_i)$ denote the injectivity radius
of $X_i$ and assume that $\inj(X_i)\to\infty$ as $i\to\infty$. Then
by \cite[Theorem 4.5]{BV} one has
\begin{equation}\label{limittor}
\lim_{i\to\infty}\frac{\log T_{X_i}(\tau)}{\vol(X_i)}=t^{(2)}_{\widetilde
X}(\tau).
\end{equation} 
Let $\delta(\widetilde X):=\rk_\C(G)-\rk_\C(K)=1$. The constant 
$t^{(2)}_{\widetilde X}(\rho)$ has been computed in
\cite[Proposition 5.2]{BV}. As a result if follows that 
$t^{(2)}_{\widetilde X}(\rho)\neq0$ if and only if $\delta(\widetilde X)=1$. 
Combined with the equality of analytic torsion and Reidemeister torsion 
\cite{Mu2}, Bergeron and Venkatesh \cite{BV} used this result in the
case $\delta(\widetilde X)=1$ to show that 
torsion in the cohomology of cocompact arithmetic groups growth exponentially.

\subsection{The regularized trace of the heat operator}
The definition of the analytic torsion is based on the compactness of the 
underlying manifold. Without this assumption, the 
heat operator $e^{-t\Delta_p(\rho)}$ is in general not a trace class operator.
So in order to generalize the above method to non-compact manifolds, the first 
problem is to define an appropriate regularized trace of the heat operators.
For hyperbolic manifolds of finite volume a regularized trace has been
defined by means of the renormalized trace of the heat 
kernel. This method goes back to Melrose \cite{Me} and has been used, for
example, in \cite{Pa}, \cite{CV}, \cite{PR}, 
\cite{MP1}, \cite{MP3}, \cite{MP4}. The method is based on the truncation of 
the heat kernel with the help of an appropriate height function. The upshot is
that the renormalized trace of the heat kernel equals the geometric side of 
the Selberg trace formula applied to the heat kernel.

In the higher rank case we proceed in the same way as in the case of 
hyperbolic manifolds. To truncate the heat kernels we use Arthur's truncation
operator \cite{Ar1}. In \cite{MM1} we have dealt with the case  $\bG=\GL(n)$. 
As in the case of hyperbolic manifolds, the regularized trace
of the heat operator equals the
geometric side of the Arthur trace formula applied to the heat kernel. We use
this as the definition of the regularized trace for a general reductive group
$\bG$. Let us explain this approach in more detail.
To this end we pass 
to the adelic framework. For simplicity assume 
that $\bG$ is a connected semisimple algebraic group defined over $\Q$. Assume 
that $\bG(\R)$ is not compact. Let $\K_\infty$ be a maximal compact subgroup of 
$\bG(\R)$. Put $\widetilde X=\bG(\R)/\K_\infty$. Let $\A$ be
the ring of adeles of $\Q$ and $\A_f$ the ring of finite adeles. Let 
$K_f\subset \bG(\A_f)$
be an open compact subgroup. We consider the adelic quotient
\begin{equation}\label{adelic-quot}
X(K_f)=\bG(\Q)\bs (\widetilde X\times \bG(\A_f)/K_f).
\end{equation}
This is the adelic version of a locally symmetric space. In fact, 
$X(K_f)$ is the disjoint union of finitely many locally symmetric spaces, i.e.,
there exists lattices $\Gamma_i\subset \bG(\R)$, $i=1,\dots,l$, such that 
\begin{equation}\label{adelic-quot-1}
X(K_f)=\Gamma_1\bs\wX\sqcup\cdots\sqcup\Gamma_l\bs\wX.
\end{equation}
(see Section \ref{sec-analtor}). 
If $\bG$ is simply connected, then by strong approximation we have
\[
X(K_f)=\Gamma\bs \widetilde X,
\]
where $\Gamma=(\bG(\R)\times K_f)\cap \bG(\Q)$. We will assume that $K_f$ is 
neat
so that $X(K_f)$ is a manifold. Let $\nu\colon \K_\infty\to \GL(V_\nu)$ be a 
finite dimensional unitary representation. It induces a homogeneous 
Hermitian vector bundle $\widetilde E_\nu$ over $\widetilde X$,  which is 
equipped with the canonical connection $\nabla^\nu$. Being homogeneous,
 it can be pushed down to every connected component $\Gamma_i\bs \widetilde X$ 
of $X(K_f)$ resulting in a locally homogeneous vector bundle over $X(K_f)$. 
Let $\widetilde\Delta_\nu$ (resp. $\Delta_\nu$) be the associated
Bochner-Laplace operator acting in the space of smooth section of 
$\widetilde E_\nu$ (resp. $E_\nu$). Using the kernel of the
heat semigroup $e^{-t\widetilde\Delta_\nu}$, $t>0$, at the Archimedean place and
the normalized characteristic function at the finite places, we get a
smooth function $\phi_t^\nu$ on $\bG(\A)$ which belongs to $\Co(\bG(\A);K_f)$, 
the adelic version of the Schwartz space (see Section \ref{sec-prelim} for its 
definition). Let $J_{\geo}$ be the 
geometric side of the Arthur trace formula introduced in \cite{Ar1}. Following
\cite[Definition 11.1]{MM1}, we define the regularized trace of 
$e^{-t\Delta_\nu}$  by
\begin{equation}\label{reg-trace}
\Tr_{\reg}\left(e^{-t\Delta_\nu}\right):=J_{\geo}(\phi^\nu_t)
\end{equation}
 In order to define the zeta function 
by the analogous formula \eqref{zeta-fct} we need to determine the asymptotic
behavior of $\Tr_{\reg}\left(e^{-t\Delta_\nu}\right)$ as $t\to0$ and $t\to\infty$,
respectively. To this end we use the Arthur trace formula. 

Our first main result is concerned with the small time behavior of the 
regularized trace which is described in the following theorem.
\begin{theo}\label{thm:asymptotic:geom}
Let $\bG$ be a reductive algebraic group over $\Q$, $K_f\subset\bG(\A_f)$ an 
open compact subgroup, and $\nu:\K_\infty\longrightarrow \GL(V_\nu)$ a finite 
dimensional unitary representation of $\K_\infty$. Let $\Delta_\nu$ be the 
associated Bochner-Laplace operator on the adelic quotient $X(K_f)$. Suppose 
that $K_f$ is neat, and let $r$ be the split semisimple rank of $\bG$. 
There exist 
constants $a_j, b_{ij}\in\C$, $j\ge0$, $0\le i\le r$, depending on $\nu$ and 
$K_f$ such that
 \begin{equation}\label{asex}
  \Tr_{\text{reg}}(e^{-t \Delta_\nu })
  \sim t^{-d/2}\sum_{j=0}^\infty a_j t^j  
  + t^{-(d-1)/2} \sum_{j=0}^\infty \sum_{i=0}^r b_{ij} t^{j/2} (\log t)^{i}
 \end{equation}
as $t\searrow 0$. 
\end{theo}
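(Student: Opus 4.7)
The plan is to apply Arthur's fine geometric expansion
\[
J_{\geo}(\phi_t^\nu)=\sum_{M\in\levis}\frac{|W_0^M|}{|W_0^G|}\sum_{\gamma}a^M(S,\gamma)\,J_M(\gamma,\phi_t^\nu)
\]
and to analyze separately three families of terms classified by the Jordan decomposition $\gamma=\sigma u$: the identity $(\sigma,u)=(1,1)$; the nontrivial semisimple contributions $\sigma\ne 1$, $u=1$; and the unipotent and mixed classes $u\ne 1$. The first family will produce the $t^{-d/2}\sum_{j\ge 0}a_j t^j$ part of \eqref{asex}, the second will be shown to be $O(e^{-c/t})$ as $t\searrow 0$, and the third will account for the $t^{-(d-1)/2}\sum_{j,i} b_{ij} t^{j/2}(\log t)^i$ part.

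The identity term equals $\vol(\bG(\Q)\bs\bG(\A)^1/K_f)\cdot h_t^\nu(1)$, where $h_t^\nu(1)=\tr H_t^\nu(1)$ is the value at the origin of the convolution kernel of $e^{-t\widetilde\Delta_\nu}$. The classical short-time heat-kernel expansion for the Bochner-Laplacian on the Riemannian symmetric space $\widetilde X$ gives $h_t^\nu(1)\sim t^{-d/2}\sum_{j\ge 0}\alpha_j t^j$ with coefficients determined by the local heat invariants; its product with the finite adelic volume supplies the first series.

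For nontrivial semisimple rational $\sigma$, I would combine Harish-Chandra's Gaussian bound for the heat kernel on $\widetilde X$ with Arthur's polynomial bounds on the weighted orbital integrals $J_M(\sigma,\cdot)$. Since the Riemannian displacement $g\mapsto d(\sigma gK_\infty,gK_\infty)$ has a positive infimum on the symmetric space and only finitely many conjugacy classes of such $\sigma$ intersect $\supp\phi_t^\nu$ for $t\le 1$ (because of the fixed level $K_f$), the resulting contribution is $O(e^{-c/t})$ as $t\searrow 0$, hence is smaller than every term appearing in either series of \eqref{asex}.

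The hard part is the analysis of the unipotent and mixed contributions, which produces the second sum in \eqref{asex}. Using Arthur's parabolic descent, as carried out in \cite{MM1} for $G=\GL(n)$, one expresses each $J_M(\sigma u,\phi_t^\nu)$ with $u\ne 1$ as a weighted unipotent orbital integral on the centralizer $G_\sigma(\R)$ evaluated against the restriction of $h_t^\nu$. On a unipotent radical of dimension $\ell$ the substitution $u\mapsto t^{1/2}u$ converts the Gaussian factor in $h_t^\nu$ into an $L^1$-integrable function of a rescaled variable and introduces a factor $t^{-(d-\ell)/2}$; the smallest nontrivial unipotent orbit has $\ell=1$, which yields the leading exponent $-(d-1)/2$, and further $t^{j/2}$ factors arise from higher-order Shalika germs. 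The logarithmic powers $(\log t)^i$, $0\le i\le r$, appear because Arthur's weight function $v_M(x)$ is a polynomial of total degree at most $r$ in logarithms of parabolic heights, so the same rescaling introduces $\log t$ to degree at most $r$. The main technical obstacle is to carry out this descent-plus-rescaling uniformly in $t$ across all Levi subgroups $M\in\levis$, all mixed Jordan classes, and all unipotent orbits, and to verify that the assembled double series is asymptotic in the required sense; once this is established, summing all contributions completes the proof of \eqref{asex}.
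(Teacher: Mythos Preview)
Your overall strategy---fine geometric expansion, identity term giving the first series, unipotent terms giving the log-series via rescaling and the logarithmic nature of Arthur's weights---matches the paper's. But there are two genuine gaps and one place where the paper's route is cleaner.

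\textbf{Gap 1: compact support.} Arthur's fine geometric expansion, and in particular the unipotent fine expansion you need, is stated for $f\in C_c^\infty(G(\A)^1)$. Your $\phi_t^\nu$ lies only in the adelic Schwartz space, so you cannot invoke the expansion directly. The paper first replaces $\phi_t^\nu$ by a compactly supported truncation $\tilde\phi_t^\nu$ at the cost of an $O(e^{-c/t})$ error (this is \cite[Proposition~12.1]{MM1}, quoted as \eqref{eq:geom:approx}), and only then applies the fine expansion. Your argument that ``only finitely many conjugacy classes of $\sigma$ intersect $\supp\phi_t^\nu$'' fails for the same reason: the support is not compact.

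\textbf{Gap 2: the weight functions.} The weight that appears in a weighted \emph{unipotent} orbital integral is not the familiar $v_M(x)$ (a polynomial in logarithms of parabolic heights) but Arthur's more intricate weight $w_{M,\CmO}^Q$ attached to a unipotent class $\CmO$, constructed in \cite{Ar88} as a limit \eqref{eq:defW}. The paper's main technical input (Sections~\ref{sec:weighted}--\ref{subsec:weights}, culminating in Corollary~\ref{prop:weight:fct}) is precisely to show that $w_{M,\CmO}(\Mid+sZ)$ is a polynomial of degree $\le r$ in $\log s$ with coefficients that are polynomials in $\log|p_j(Z)|$; this is what makes the rescaling argument work. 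Your appeal to ``$v_M$ is a polynomial of degree $\le r$ in logarithms'' is the right intuition but not the right object, and establishing the scaling for $w_{M,\CmO}$ is the core of the proof beyond $\GL(n)$.

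\textbf{A simplification you missed.} The paper does not treat nontrivial semisimple or mixed classes at all: Proposition~\ref{prop:replace:geom:unip} shows that once $K_f$ is neat and the truncation $\tilde\phi_t^\nu$ has small enough support, $J_{\geo}(\tilde\phi_t^\nu)=J_{\unip}(\tilde\phi_t^\nu)$ exactly. This eliminates your second and ``mixed'' families in one stroke and is where the neatness hypothesis is actually used. After that, one applies the fine expansion of $J_{\unip}$ and Arthur's splitting formula to separate the archimedean place, reducing to the asymptotics of $J_M^{L}(\CmO,\psi_{t,Q}^\nu)$ established in Proposition~\ref{prop-asympt-exp}. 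Your remark that ``the smallest nontrivial unipotent orbit has $\ell=1$'' is also off: the relevant exponent is $t^{-d/2+d_\CmO^G/4}$ with $d_\CmO^G$ the (even) dimension of the induced orbit in $G$, so the minimal nontrivial contribution starts at $t^{-(d-1)/2}$ because $d_\CmO^G\ge 2$.
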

For $\bG=\GL(n)$ or $\SL(n)$ this result was proved in \cite{MM1}, and for
hyperbolic manifolds in \cite{Mu4}.  In these cases the arguments are 
simplified considerably.

To study the large time behavior we restrict attention to twisted Laplace
operators, which are relevant for our purpose. 
As before, let $\tau\colon \bG(\R)^1\to \GL(V_\tau)$ be a finite dimensional complex 
representation. Let $\Gamma_i\bs \widetilde X$, $i=1,\dots,l$,  be the
components of $X(K_f)$. The restriction of $\tau$ to $\Gamma_i$ induces a flat 
vector bundle $E_{\tau,i}$ over $\Gamma_i\bs \widetilde X$. The disjoint union is
 a flat vector bundle $E_\tau$ over $X(K_f)$. By \cite{MM} it is isomorphic to 
the locally homogeneous vector bundle associated to $\tau|_{\K_\infty}$. It 
can be equipped with a fiber metric induced from the homogeneous 
bundle. Let $\Delta_p(\tau)$ be the corresponding twisted Laplace operator on
$p$-forms with values in $E_\tau$. Let $\Ad_{\pf}\colon \K_\infty\to \GL(\pf)$ 
be the adjoint representation of $\K_\infty$ on $\pf$, where $\pf=\kf^\perp$, and 
$\nu_p(\tau)=\Lambda^p\Ad_{\pf}^\ast\otimes\tau|_{\K_\infty}$. 
Up to a vector bundle endomorphism, $\Delta_p(\tau)$ equals the 
Bochner-Laplace operator $\Delta_{\nu_p(\tau)}$. So 
$\Tr_{\reg}\left(e^{-t\Delta_p(\tau)}\right)$ is well defined. The large time 
behavior of the regularized trace is described by the following proposition.

\begin{theo}\label{prop-lt}
Let $K_f\subset \bG(\A_f)$ be an open compact subgroup. Assume that $K_f$ is 
neat. Let $\tau\in\Rep(\bG(\R)^1)$ be irreducible. 
Assume that $\tau\not\cong\tau_\theta$. Then we have
\begin{equation}\label{largetime}
\Tr_{\reg}\left(e^{-t\Delta_p(\tau)}\right)=O(e^{-ct})
\end{equation}
as $t\to\infty$ for all $p=0,\dots,d$. 
\end{theo}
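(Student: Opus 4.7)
The plan is to apply the Arthur trace formula to identify $\Tr_{\reg}\bigl(e^{-t\Delta_p(\tau)}\bigr)$ with the spectral side $J_{\spec}$ of the trace formula applied to the heat kernel of $\Delta_p(\tau)$, and then to control each contribution on the spectral side using the strong acyclicity consequences of the assumption $\tau\not\cong\tau_\theta$. The representation-theoretic input is a Bergeron--Venkatesh type lemma \cite[\S 5]{BV}: under $\tau\not\cong\tau_\theta$, there is a constant $\lambda_0=\lambda_0(\tau)>0$ such that for every Levi subgroup $M\subset\bG$ defined over $\Q$ and every irreducible unitary $(\gf,\K_\infty)$-module $\pi$ appearing in $\I_P^G(\sigma_\lambda)$ for some irreducible unitary $\sigma$ of $M(\R)$ and $\lambda\in i\af_M^\ast$, the eigenvalue of $\Delta_p(\tau)$ on the $\nu_p(\tau)$-isotypic part of $\pi$ is bounded below by $\lambda_0+\|\lambda\|^2$. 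The original proof is purely representation-theoretic and extends essentially unchanged to any quasi-split reductive $\bG$.

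On the discrete part of the spectral side the contribution reduces to $\sum_j m_j\, e^{-t\mu_j}$, where the eigenvalues $\mu_j$ of $\Delta_p(\tau)$ on the $K_f$-fixed part of the discrete spectrum satisfy $\mu_j\ge \lambda_0>0$ and the multiplicities $m_j$ grow at most polynomially (Weyl-type counting bound). Hence this contribution is trivially $O(e^{-ct})$ for any $0<c<\lambda_0$ as $t\to\infty$.

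The main obstacle lies in the continuous part of the spectral side, which for each proper $\Q$-Levi $M$ has the schematic shape
\[
\sum_\sigma \frac{1}{|W(M)|} \int_{i\af_M^\ast} \tr\Bigl(\mathcal{N}_{M,P}(\sigma,\lambda)\, \I_P^G\bigl(\sigma_\lambda,\phi_t^{\nu_p(\tau)}\bigr)\Bigr)\,d\lambda,
\]
where $\sigma$ ranges over the discrete automorphic spectrum of $M$ and $\mathcal{N}_{M,P}(\sigma,\lambda)$ assembles the weighted intertwining operators and their logarithmic derivatives appearing in Arthur's spectral expansion \cite{Ar1}. Since $\phi_t^{\nu_p(\tau)}$ arises from the heat kernel, the induced operator $\I_P^G(\sigma_\lambda,\phi_t^{\nu_p(\tau)})$ acts on each Casimir isotypic component of the $\K_\infty\times K_f$-finite vectors by the scalar $e^{-t(\mu(\sigma)+\|\lambda\|^2)}$, with $\mu(\sigma)\ge \lambda_0$ by the Bergeron--Venkatesh lemma applied to the Levi $M$. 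Factoring $e^{-t\lambda_0/2}$ out reduces the problem to showing that, uniformly in $t\ge 1$, the remaining Gaussian integral in $\lambda$ against polynomially growing factors coming from $\mathcal{N}_{M,P}$ is bounded.

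The hard step is exactly this uniform polynomial control: obtaining uniform bounds in $\lambda$ and in the Archimedean parameters of $\sigma$ on the trace norm of $\mathcal{N}_{M,P}(\sigma,\lambda)$ restricted to $\nu_p(\tau)$-isotypic vectors, together with absolute convergence of the resulting double sum-integral over $\sigma$ and $\lambda$. For $\bG=\GL(n)$ this was carried out in \cite{MM1}; for general quasi-split $\bG$ the required uniform normalization of intertwining operators is supplied by the work of Finis--Lapid(--M\"uller). Granting these inputs, summing the resulting $O(e^{-ct})$ bounds over the finitely many conjugacy classes $[M]$ of Levis yields the claim.
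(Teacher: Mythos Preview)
Your proposal is correct and follows essentially the same approach as the paper: identify the regularized trace with $J_{\spec}(\phi_t^{\tau,p})$ via the trace formula, decompose along Levi subgroups using the refined spectral expansion of \cite{FLM1}, use $\tau\not\cong\tau_\theta$ to obtain a uniform spectral gap (the paper invokes \cite[Lemmas~13.1--13.2]{MM1} rather than \cite{BV}, but these are cognate), and control the intertwining contributions by polynomial bounds in the spectral parameter.

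One point worth sharpening: you write that the needed bounds on $\mathcal{N}_{M,P}$ are ``supplied by the work of Finis--Lapid(--M\"uller)''. In fact the paper does not simply cite these; the integrated bound on logarithmic derivatives of local normalized intertwining operators (Proposition~\ref{log-der}) is proved here in Section~\ref{sec-locint}, using the Borwein--Erd\'elyi inequality from \cite{FiLaMu} together with degree bounds for $R_{\overline P|P}(\pi_v,s)$ reduced to the square-integrable case via Langlands quotients. This is combined with the global winding-number bound \eqref{log-deriv} from \cite{Mu2} and the multiplicity estimate \eqref{estim10}. So the ``hard step'' you flag is genuinely part of the paper's content rather than an external input; your sketch is otherwise an accurate summary of the argument.
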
 
\subsection{Regularized analytic torsion}
By Theorems~\ref{thm:asymptotic:geom} and \ref{prop-lt} we can define the zeta
function $\zeta_p(s;\tau)$ of $\Delta_p(\tau)$ as in \eqref{zetafct}, using the regularized
trace of $e^{-t\Delta_p(\tau)}$ in place of the usual trace in \eqref{zeta-fct}. The corresponding 
Mellin transform converges absolutely
and uniformly on compact subsets of the half-plane $\Re(s)>d/2$ and admits
a meromorphic extension to the whole complex plane by Theorems~\ref{thm:asymptotic:geom} and \ref{prop-lt}. Because of the presence
of the log-terms in the expansion \eqref{asex}, the zeta function may have a
pole at $s=0$ so that we need to use the finite part of $\zeta_p(s;\tau)$ in the definition of the analytic torsion of $X(K_f)$. More precisely, for a meromorphic function $f(s)$  on $\C$ and $s_0\in\C$, 
let $f(s)=\sum_{k\ge k_0}a_k(s-s_0)^k$
be the Laurent expansion of $f$ at $s_0$, and put $\FP_{s=s_0}f:=a_0$. Now we 
define
the analytic torsion $T_{X(K_f)}(\tau)\in\C\setminus\{0\}$ by
\begin{equation}\label{analtor}
\log T_{X(K_f)}(\tau)=\frac{1}{2}\sum_{p=0}^d (-1)^p p 
\left(\FP_{s=0}\frac{\zeta_p(s;\tau)}{s}\right).
\end{equation}

\begin{remark}
Let $\delta(\widetilde X)=\rk_\C G(\R)^1-\rk_\C {\bf K}_\infty$. We recall that in
the co-compact case, the analytic torsion is trivial, unless 
$\delta(\widetilde X)=1$. As the example of a hyperbolic manifold of even 
dimension shows \cite{MP1}, this does not need to be so in the non co-compact 
case.
\end{remark}
\subsection{Approximation of $L^2$-torsion}
Let $\{K_j\}_{j\in\N}$ be a sequence of open
compact subgroups of $\bG(\A_f)$. We say that $K_j\rightarrow 1$ as 
$j\to\infty$, if for every open compact subgroup $U$ of $\bG(\A_f)$ 
there exists $N\in\N$ such  that $K_j\subset U$ for all $j\ge N$.
We wish to study the limiting behavior of $\log T_{X(K_j)}(\tau)/\vol(X(K_j))$
as $j\to\infty$. 
The main goal is to generalize the results of 
\cite{MM2} to other reductive groups. For an adelic quotient $X:=X(K_f)$ let 
$T^{(2)}_X(\tau)$
be the $L^2$-torsion \cite{Lo}, \cite{MV}. Since the heat kernels on
$\wX$ are $G(\R)$-invariant, one has
\begin{equation}
\log T^{(2)}_X(\tau)=\vol(X) t^{(2)}_{\wX}(\tau),
\end{equation}
where $t^{(2)}_{\wX}(\tau)$ depends only on $\wX$ and $\tau$. 
 Based on the known results in the compact case \cite{BV}, we make 
the following conjecture.

\begin{conjecture}\label{conj1}
Let $\tau\in\Rep(G(\R)^1)$ be irreducible and assume that 
$\tau\not\cong\tau_\theta$. Let 
$\{K_{j}\}_{j\in\N}$ be a sequence of open compact subgroups of $G(\A_f)$
with $K_{j}\to 1$ as $j\to\infty$. Then
\begin{equation}
\lim_{j\to\infty}\frac{\log T_{X(K_j)}(\tau)}{\vol(X(K_j))}= t^{(2)}_{\wX}(\tau). 
\end{equation}
\end{conjecture}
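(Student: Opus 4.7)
The plan is to apply the Arthur trace formula to $\phi_t^{\nu_p(\tau)}$ and establish a Benjamini--Schramm type limit multiplicity statement on the geometric side: as $K_j\to 1$, only the identity orbital integral survives after normalization by $\vol(X(K_j))$. By definition \eqref{reg-trace}, $\Tr_{\reg}(e^{-t\Delta_p(\tau)})=J_{\geo}(\phi_t^{\nu_p(\tau)})$, and the identity contribution in $J_{\geo}$ is exactly $\vol(X(K_j))\,h_t^{\nu_p(\tau)}(e)$. Since the $L^2$-torsion density $t^{(2)}_{\wX}(\tau)$ is built out of $h_t^{\nu_p(\tau)}(e)$ via the same Mellin-transform-and-finite-part procedure, the conjecture reduces to showing that for each $t>0$,
\[
\frac{J_{\geo}(\phi_t^{\nu_p(\tau)})-\vol(X(K_j))\,h_t^{\nu_p(\tau)}(e)}{\vol(X(K_j))}\longrightarrow 0
\]
as $j\to\infty$, together with bounds that allow one to pass to the limit under the Mellin transform and the operation $\FP_{s=0}(\,\cdot\,/s)$.

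For the pointwise-in-$t$ limit, I would analyze Arthur's fine geometric expansion term by term. A nontrivial semisimple class $\gamma\ne 1$ has a finite-adelic orbital integral of $\chi_{K_j}$ that vanishes once $K_j$ is small enough to exclude $\gamma$, so by the hypothesis $K_j\to 1$ each such class is eventually killed; summing and using the neatness of $K_j$ together with lattice-point estimates as in \cite{MM1}, \cite{MM2} should yield an $o(\vol(X(K_j)))$ bound. The unipotent and weighted terms are more subtle because their orbital integrals involve logarithmic factors and Arthur's truncation parameter, but the machinery developed in the proof of Theorem \ref{thm:asymptotic:geom} should provide estimates of the required $o(\vol(X(K_j)))$ type.

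To upgrade pointwise-in-$t$ convergence to convergence of $\FP_{s=0}\zeta_p(s;\tau)/s$, I would split the Mellin integral at $t=1$. For $t\ge 1$, Theorem \ref{prop-lt} gives exponential decay, which must be made uniform in $K_j$ using a uniform spectral gap or direct spectral-side bounds. For $t\le 1$, Theorem \ref{thm:asymptotic:geom} provides an asymptotic expansion whose coefficients $a_j(K_j),b_{ij}(K_j)$ must satisfy $a_j(K_j)/\vol(X(K_j))\to a_j^{(2)}$ (the $L^2$ coefficients) and $b_{ij}(K_j)=o(\vol(X(K_j)))$; the latter is essential because the $L^2$-heat expansion is a pure power series with no $\log t$ terms, so all log-coefficients must wash out in the limit. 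The main obstacle lies precisely here: tracking the $K_j$-dependence of the weighted orbital integrals uniformly for small $t$ in order to control the log-coefficients. In the $\GL(n)$ case this required in \cite{MM2} a delicate analysis of $(M,\rho)$-orbital integrals at non-Archimedean places, and for general quasi-split $\bG$ one will need an analogous but considerably more intricate refinement of the arguments behind Theorem \ref{thm:asymptotic:geom}. A secondary difficulty is that a uniform spectral gap for arbitrary $K_j\to 1$ is not automatic, which suggests restricting to principal congruence sequences converging to $1$ at a fixed finite set of places, consistent with the main result stated in the abstract.
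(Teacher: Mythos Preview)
The statement you are asked to prove is labelled a \emph{conjecture} in the paper, and the paper explicitly says that it is ``unable to prove Conjecture~\ref{conj1} in all generality.'' There is therefore no proof in the paper to compare your proposal against; the paper only establishes the restricted version, Theorem~\ref{theo-approx}, under the hypotheses that $G$ is admissible and that the $K_j$ are \emph{principal} congruence subgroups converging to $1$ at a \emph{fixed} finite set $S$ of primes. Your proposal is best read as a strategy sketch rather than a proof, and as such it is broadly aligned with the argument the paper carries out for Theorem~\ref{theo-approx} in Section~\ref{sec-towers}: split the Mellin integral, control the large-$t$ piece via the spectral side, and on the geometric side show that after normalising by $\vol(X(K_j))$ only the identity contribution survives.

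There is, however, one genuine obstruction you do not identify, and it is precisely the reason the paper restricts to convergence at a fixed finite set of places. The fine geometric expansion (Proposition~\ref{prop:fine:expansion}) is taken with respect to a finite set $S$ containing all places where $K_{j,v}\neq\K_v$, and it involves the global coefficients $a^M([u]_S,S)$. If $K_j\to 1$ in the unrestricted sense, then $S=S(j)$ must grow with $j$, and one would need uniform control over $a^M([u]_{S(j)},S(j))$ as $S(j)$ varies. Such estimates are only available for $\GL(n)$ (via \cite{Ma1}); for general reductive $G$ they are not known, and this is stated explicitly in the introduction. Your handling of the non-identity terms (``each such class is eventually killed'') is correct in spirit for the local orbital integrals, but it does not touch these global coefficients, and without bounds on them the sum over $(M,[u]_S)$ cannot be shown to be $o(\vol(X(K_j)))$. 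The paper's restriction $K_j\underset{S}{\to}1$ freezes $S$, so that the $a^M([u]_S,S)$ are constants independent of $j$, and then the splitting formula~\eqref{eq:splitting} reduces everything to the vanishing of $J_M^{L_2}([u]_{S_f},\One_{K(N_j)_{S_f}})$, which follows from \cite{Cl}. The other two difficulties you flag (uniform large-$t$ decay and uniform control of the log-coefficients) are real and are handled in the paper only under the additional hypotheses of admissibility (for (TWN) and (BD), see Proposition~\ref{prop-twn-bd}) and principal congruence structure; your proposal correctly anticipates that some such restriction is needed.
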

In \cite{MM2} we established this conjecture for principal congruence subgroups
of $\GL(n)$ and $\SL(n)$. 
The constant $t^{(2)}_{\wX}(\tau)$ has been computed in
\cite[Proposition 5.2]{BV}. It is shown that $t^{(2)}_{\wX}(\tau)\neq0$ if and
only if $\delta(\wX)=1$.

Due to problems related to the geometric side of the trace formula, 
we are unable to prove Conjecture \ref{conj1} in full generality.
The problem is concerned with the global coefficients in
Arthur's fine expansion of the geometric side. For $\GL(n)$ estimations of
the global coefficients are known \cite{Ma1}.  For  groups other than
$\GL(n)$, currently very little is known about these coefficients. 
That is why at moment we can only deal with a restricted class of sequences of 
congruence groups which were considered in \cite[Sect. 2]{Cl}.
Let $S$ be a finite set of primes. Let $\{K_j\}_{j\in\N}$ be a
sequence of open compact subgroups of $G(\A_f)$. We say that $\{K_j\}$ 
converges to $1$ at $S$, denoted by $K_j\underset{S}{\rightarrow} 1$, if
there exists an open compact subgroup $K^S=\prod_{p\not\in S}K_p$ of $G(\A^S)$
such that
\begin{equation}\label{conv-s}
K_j=K_{S,j}\times K^S, \quad\text{with}\quad K_{S,j}=\prod_{p\in S}K_{p,j},\quad
\text{and}\quad K_{S,j}\underset{j\to\infty}{\longrightarrow} 1,
\end{equation}
where the latter condition means that for every open compact subgroup $U$
of $G_S:=\prod_{p\in S}G(\Q_p)$ there exists $N_0\in\N$ such that $K_{S,j}\subset
U$ for $j\ge N_0$. 
An example are the principal congruence subgroups $\Gamma_j:=\Gamma(p^j)$
of $\SL(n,\Z)$ of level $p^j$, where $p$ is a fixed prime. If $S=S_{\fin}$, 
the set of all finite places, we will simply write $K_j\to 1$.  

We need additional assumption which are related to the spectral side of the 
trace formula. Recall that in \cite[Section 5]{FiLaMu}, for a reductive group
$G$ we introduced and studied two natural properties, called (TWN) and (BD).
The abbreviations stand for {\it tempered winding numbers} and 
{\it bounded degree}, respectively. These properties concern the behavior of the
intertwining operators associated to proper parabolic subgroups of $\bG$. They
are trivially satisfied if $\bG$ is anisotropic modulo the center. By
\cite[Prop. 5.5, Theorem 5.15]{FiLaMu} they hold if $\bG$ equals
$\GL(n)$ or $\SL(n)$. In many other cases they have been established in 
\cite{FL2}, \cite{FL4}. 
The main result is the following
theorem.
\begin{theo}\label{theo-approx}
Let $G$ be a reductive algebraic group over $\Q$ which satisfies (TWN) and 
(BD). Let $\tau\in\Rep(G(\R)^1)$ be irreducible and assume that 
$\tau\not\cong\tau_\theta$. Let $S$ be a finite
set of primes. Let $\{K_j\}_{j\in\N}$ be a sequence of open
compact subgroups of $G(\A_f)$ with $K_j\underset{S}{\rightarrow} 1$ as 
$j\to\infty$. Let $X_j:=X(K_j)$. Then
\begin{equation}\label{approx-l2tor}
\lim_{j\to\infty}\frac{\log T_{X_j}(\tau)}{\vol(X_j)}=t^{(2)}_{\wX}(\tau).
\end{equation}
\end{theo}
As mentioned above, we have $t^{(2)}_{\wX}(\tau)\neq0$ if and only if 
$\delta(\wX)=1$.
Examples of groups with $\delta(\wX)=1$ are  $\SL(3)$, $\SL(4)$ and $\SO(p,q)$ 
with $pq$  odd. Examples of reductive groups that satisfy properties (TWN) are 
given in
\cite{FL2} and property (BD) is discussed in \cite{FL4}. Using
\cite[Theorem 3.11]{FL2} and \cite[Corollary 1]{FL4}, we get
\begin{cor}\label{cor-conv1} 
\begin{enumerate}
\item Let $\bG$ be an inner form of $\GL(n)$ or $\SL(n)$ defined over $\Q$.
Let $\tau\in\Rep(G(\R)^1)$ be irreducible satisfying $\tau\not\cong\tau_\theta$.
Let $S$ be a finite set of primes with $2\notin S$.  
Then 
\eqref{approx-l2tor} holds for every sequence $\{K_j\}_{j\in\N}$ of open subgroups
of a given open compact subgroup ${\bf K}_0$ of $\bG(\A_{f})$ satisfying
 $K_j\underset{S}{\rightarrow} 1$ as $j\to\infty$. 
\item Let $\bG$ be a quasi-split classical group defined over $\Q$.
Let $\tau\in\Rep(G(\R)^1)$ be irreducible satisfying $\tau\not\cong\tau_\theta$.
Let $S$ be a finite set of primes with $2\notin S$. 
Then \eqref{approx-l2tor}
holds for every sequence of open subgroups $\{K_j\}_{j\in\N}$ of a given open 
compact subgroup ${\bf K}_0$ of $\bG(\A_f)$ satisfying 
$K_j\underset{S}{\rightarrow} 1$ as $j\to\infty$.
\end{enumerate}
\end{cor}
We note that among the classical groups, the special orthogonal groups 
$\SO(p,q)$ with $pq$ odd satisfy $\delta(\widetilde X)=1$, which implies that
$t^{(2)}_{\widetilde X}(\tau)\neq 0$. Furthermore, $\SO(p,q)$ is quasi-split iff
$|p-q|\le 2$.

Suppose that  $\bG$ satisfies strong approximation. Then Theorem 
\ref{theo-approx} can be restated in terms of arithmetic subgroups 
of $\bG(\R)$. Let $K\subset \bG(\A_f)$ be an open compact subgroup of
$\bG(\A_f)$ and let $\Gamma_K:=G(\Q)\cap K\subset \bG(\R)$. Then $\Gamma_K$
is a discrete subgroup of $\bG(\R)$. By the
strong approximation theorem \cite[Theorem 7.12]{PlRa} it follows that the
$\bG(\R)$-spaces $G(\Q)\bs G(\A)/K$ and $\Gamma_K\bs G(\R)$ are 
canonically isomorphic. A subgroup $\Gamma\subset\bG(\Q)$ is called a
{\it congruence subgroup} if there exists an open compact subgroup $K_\Gamma$
of $\bG(\A_f)$ such that $\Gamma=\bG(\Q)\cap K_\Gamma$.

\begin{cor}\label{corr1}
Suppose that $\bG$ is a simply connected $\Q$-simple group satisfying (TWN) and
(BD) and that $\bG(\R)^1$ is not compact. Let ${\bf K}_0\subset \bG(\A_f)$
be a compact open subgroup and $\Gamma_0:=\bG(F)\cap {\bf K}_0$. Let 
$\{\Gamma_j\}_{j\in\N}$ be a sequence of congruence subgroups of $\Gamma_0$. 
Let $S$ be a finite set of primes and assume that $K_j\underset{S}{\rightarrow} 1$ as $j\to\infty$.
Let $\tau\in\Rep(\bG(\R)^1)$ be irreducible and assume that $\tau\not\cong
\tau_\theta$. Then 
\begin{equation}\label{approx-l2tor-2}
\lim_{j\to\infty}\frac{\log T_{\Gamma_j\bs\widetilde X}(\tau)}
{\vol(\Gamma_j\bs\widetilde X)}=t^{(2)}_{\wX}(\tau).
\end{equation}
\end{cor}
The groups $\SL(n)$ are simple and simply connected and satisfy properties 
(TWN) and (BD) \cite[Prop. 5.5, Theorem 5.15]{FiLaMu}. Moreover, for $n\ge 3$ 
every finite index subgroup of $\SL(n,\Z)$ is a congruence subgroup \cite{BLS}, 
\cite{Mn}. Thus for every finite index subgroup $\Gamma$ of $\SL(n,\Z)$, 
$n\ge 3$, there exists an open compact subgroup $K_\Gamma$ of $\bG(\A_f)$ such 
that $\Gamma=\bG(\Q)\cap K_\Gamma$. By Corollary \ref{corr1} we get
\begin{cor}
Let $\widetilde X_n:=\SL(n,\R)/\SO(n)$. Let $\tau\in\Rep(\SL(n,\R))$ be
irreducible and assume that $\tau\not\cong\tau_\theta$. Let $S$ be a finite set 
of primes. Let $\{\Gamma_j\}_{j\in\N}$ be a sequence of finite index subgroups
of $\SL(n,\Z)$ satisfying $K_{\Gamma_j}\underset{S}{\rightarrow} 1$ as 
$j\to \infty$. Then
\begin{equation}\label{approx-l2tor-3}
\lim_{j\to\infty}\frac{\log T_{\Gamma_j\bs\widetilde X_n}(\tau)}
{\vol(\Gamma_j\bs\widetilde X_n)}=t^{(2)}_{\wX_n}(\tau).
\end{equation}
\end{cor}
\begin{remark}
It is very desirable to remove in the results above the assumption that the
sequence of open compact subgroups $\{K_j\}$ converges to $1$ at $S$. 
This assunption is due to the insufficient knowledge of the coeffcients of
the fine expansion of the geometric side of Arthur's trace formula. For 
$\GL(n)$ there exist appropriate estimations of the coefficients \cite{Ma1}.
This was used in \cite{MM2} to established \eqref{approx-l2tor-3} 
for the sequence of principal congruence subgroups $\{\Gamma(N)\}_{N\in\N}$
of $\SL(n,\Z)$ without any restriction, which was deduced from the 
corresponding result for $\GL(n)$.  Hoffmann \cite{Ho} has described a 
conjectural approach to express the geometric side of the Arthur trace formula in terms of
zeta integrals attached to prehomogeneous vector spaces. A successful
completion of this program would lead to expressions of the coefficients in 
terms zeta functions attached to prehomogeneous vector spaces, which, in turn,
 would imply that the restriction on the sequences of congruence
subgroups can be lifted in general. For certain unipotent classes in $\GL(n)$ results have been proven in \cite{Ch}.
\end{remark}
The approach described by Hoffmann \cite{Ho} has been successfully carried out
 for classical groups of absolute rank up to two, especially for $\SL(3)$,
which is one of the cases where $t^{(2)}_{\wX}(\tau)\neq0$. 
For $\SL(3)$, the global coefficients are determined in \cite{HW} which can be 
used to get appropriate estimates for the global coefficients in the case of 
$\SL(3)$.  Thus for $n=3$  we get
\begin{cor}\label{cor:sl3}
Let $\widetilde X:=\SL(3,\R)/\SO(3)$. Let $\tau\in\Rep(\SL(3,\R))$ be
irreducible and assume that $\tau\not\cong\tau_\theta$. Then for every sequence
$\{\Gamma_j\}_{j\in\N}$ of finite index subgroups of $\SL(3,\Z)$ with 
$\vol(\Gamma_j\bs\widetilde X)\to\infty$ we have
\begin{equation}
\lim_{j\to\infty}\frac{\log T_{\Gamma_j\bs\widetilde X}(\tau)}
{\vol(\Gamma_j\bs\widetilde X)}=t^{(2)}_{\widetilde X}(\tau)>0.
\end{equation}
\end{cor}

\subsection{Method of proof}
Now we briefly explain our method to prove Theorems~\ref{thm:asymptotic:geom}
 and \ref{prop-lt}.  To determine the asymptotic behavior of the regularized 
trace as $t\to +0$, we start with its definition \eqref{reg-trace} as the 
geometric side of trace formula. Let $J_{\unip}(f)$ be the unipotent contribution
to the trace formula \cite{Ar85}. Then we show that $\phi_t^\nu$ 
can be replaced by a  compactly supported function 
$\widetilde \phi_t^\nu\in C_c^\infty(G(\A)^1)$ with support in a sufficiently 
small neighborhood of $1$ such that
\begin{equation}
\Tr_{\reg}\left(e^{-t\Delta_\nu}\right):=J_{\unip}(\widetilde\phi_t^\nu)+
O\left(e^{-c/t}\right)
\end{equation}
as $t\to +0$. To analyze $J_{\unip}(\widetilde\phi_t^\nu)$, we use the fine
geometric expansion \cite{Ar85} which expresses 
$J_{\unip}(\widetilde\phi_t^\nu)$ in terms of weighted orbital integrals. This
reduces the proof of Theorem~\ref{thm:asymptotic:geom} to the study of 
weighted orbital integrals. In \cite{MM1} we dealt with this problem for
the group $\GL(n)$. 
In this case all unipotent orbits are Richardson, which simplifies the 
analysis considerably. To deal with the weighted orbital integrals for an 
arbitrary reductive group $G$ we rely on \cite{Ar88}. Using
the proof of \cite[Corollary~6.2]{Ar88}, we obtain an appropriate integral
expression for the weighted orbital integrals. Again the main issue is the 
analysis of the weight function
and the proof that it has a certain logarithmic scaling behavior. Then,
as in the case of $\GL(n)$ we insert a standard parametrix for the heat kernel
into the weighted integral \eqref{eq:unipotent:combined} and determine its
asymptotic behavior as $t\to 0$. This leads to the proof of Theorem~\ref{thm:asymptotic:geom}. 

To prove Theorem~\ref{prop-lt}, we use the spectral side of the trace formula.
Let $\phi_t^{\tau,p}$ be the function in $\Co(G(\A)^1;K_f)$, which is defined in 
the same way
as $\phi_t^\nu$ in terms of the kernel of the heat operator $e^{-t\widetilde \Delta_p(\tau)}$ on the universal
covering. Then by the trace formula we have
\[
\Tr_{\reg}\left(e^{-t\Delta_p(\tau)}\right)=J_{\spec}(\phi^{\tau,p}_t).
\]
The key tool to deal with the spectral side is the refinement of the spectral
 expansion of the Arthur trace formula established in \cite{FLM1}
(see Theorem~\ref{thm-specexpand}). It states that
\[
J_{\spec}(f)=\sum_{[M]}J_{\spec,M}(f),\quad f\in\Co(G(\A)^1).
\]
Here $[M]$ runs over the conjugacy classes of Levi subgroups of $G$ and 
$J_{\spec,M}$ is a distribution associated to $M$. The term
associated to $G$ is $\Tr R_{\di}(f)$, where
$R_{\di}$ denotes the restriction of the regular representation of $G(\A)^1$ 
in $L^2(G(\Q)\bs G(\A)^1)$ to the discrete subspace. Using
our assumption that $\tau\not\cong\tau_\theta$, we obtain
$\dim\ker\Delta_p(\tau)=0$. Then it follows as in the compact case
that there exists $c>0$ such that 
\[
\Tr R_{\di}(\phi^{\tau,p}_t)=O(e^{-ct}), \quad\text{as}\;\; t\to\infty.
\]
For a proper Levi subgroup $M$ of $G$, $J_{\spec,M}(f)$ is an integral whose
main ingredient are logarithmic derivatives of intertwining operators. 
The determination of the asymptotic behavior of $J_{\spec,M}(\phi^{\tau,p}_t)$ as 
$t\to\infty$ relies on two properties, one global and one local, of the
intertwining operators. The global property is a uniform 
bound on the winding number of the normalizing factors of the intertwining
operators in the co-rank one case. The bound that we need is \eqref{log-deriv},
which was established in \cite[Theorem~5.3]{Mu2}. The local property is 
concerned with the estimation of integrals of logarithmic derivatives of 
normalized local intertwining operators $R_{\overline{P},P}(\pi_v,s)$, 
which are uniform in $\pi_v$. 

To prove Theorem \ref{theo-approx} we follow the approach used in \cite{MM2}
in the case of $\SL(n)$. The new ingredients are the results of \cite{FL2},
\cite{FL3} and \cite{FL4} concerning the spectral side of the trace formula.
These are estimations of the logarithmic derivatives of the global normalizing 
factors of the intertwining operators and bounds on the degrees of coefficients of local intertwining  operators.
At the moment,  we can treat the geometric side only for $\bG=\SL(3)$ and
$\bG=\GL(n)$ in all 
generality. For other reductive groups  estimations of the global 
coefficients are not available. This is why we need to make restrictions on the 
sequences of congruence subgroups. 

The paper is organized as follows. In Section \ref{sec-prelim} we fix notations 
and recall some basic facts. In Section \ref{sec-asyexp} we begin with the 
study of the asymptotic expansion of the regularized trace of the heat operator.
We show that for the derivation of the asymptotic expansion one can replace 
the geometric side of the trace formula by the unipotent contribution. 
Sections \ref{sec:unipotent}, \ref{sec:weighted} and \ref{sec:test:fcts}
contain some preparatory material related to weighted unipotent orbital 
integrals. In Section \ref{sec:asympt:weight:orb} we show that the weighted unipotent 
orbital integrals with respect to test functions derived from the heat kernel
admit an asymptotic expansion as $t\to0$. In Section \ref{sec:global}
we use this result combined with Arthur's fine geometric expansion to prove
Theorem~\ref{thm:asymptotic:geom}. In Section \ref{sec-trform} we recall the
the refined spectral expansion of Arthur's trace formula. Section 
\ref{sec-locint} is concerned with the study of logarithmic derivatives of
local intertwining operators. In Section \ref{sec-analtor} we use
the spectral side of the Arthur trace formula to prove Theorem~\ref{prop-lt}
which concerns the large time asymptotic behavior of the regularized trace of
the heat operator. 
Together with Theorem~\ref{thm:asymptotic:geom} this enables us to define the
regularized analytic torsion.  In the final section \ref{sec-towers} we 
study the limiting behavior of the renormalized logarithm of the analytic
torsion and prove Theorem \ref{theo-approx}.

\section{Preliminaries}\label{sec-prelim}

Let $\bG$ be a reductive algebraic group defined
over $\Q$. We fix a minimal parabolic subgroup $P_0$ of $\bG$ 
defined over $\Q$ and a Levi decomposition $P_0=M_0 U_0$, both defined 
over $\Q$.  Let $\cF$ be the set of parabolic subgroups of $G$ which contain 
$M_0$ and are defined over $\Q$. Let $\cL$ be the set of subgroups of $\bG$ 
which contain $M_0$ and are Levi components of groups in $\cF$. 
For any $P\in\cF$ we write
\[
P=M_PN_P,
\]
where $N_P$ is the unipotent radical of $P$ and $M_P$ belongs to $\cL$. 

Let $M\in\cL$. Denote by $A_M$ the $\Q$-split component of the center of $M$. 
Put $A_P=A_{M_P}$. Let $L\in\cL$ and assume that $L$ contains $M$. Then $L$ is
a reductive group defined over $\Q$ and $M$ is a Levi subgroup of $L$. We 
shall denote the set of Levi subgroups of $L$ which contain $M$ by $\cL^L(M)$.
We also write $\cF^L(M)$ for the set of parabolic subgroups of $L$, defined 
over $\Q$, which contain $M$, and $\cP^L(M)$ for the set of groups in $\cF^L(M)$
for which $M$ is a Levi component. Each of these three sets is finite. If 
$L=\bG$, we shall usually denote these sets by $\cL(M)$, $\cF(M)$ and $\cP(M)$.

Let $X(M)_\Q$ be the group of characters of $M$ which are defined over $\Q$. 
Put
\begin{equation}\label{liealg}
\af_{M}:=\Hom(X(M)_\Q,\R).
\end{equation}
This is a real vector space whose dimension equals that of $A_M$. Its dual 
space is
\[
\af_{M}^\ast=X(M)_\Q\otimes \R.
\]
 We shall write, 
\begin{equation}\label{liealg1}
\af_P=\af_{M_P},\;A_0=A_{M_0}\quad\text{and}\quad \af_0=\af_{M_0}.
\end{equation}
For $M\in\cL$ let $A_M(\R)^0$ be the connected component of the identity of
the group $A_M(\R)$. 
Let $W_0=N_{\bG(\Q)}(A_0)/M_0$ be the Weyl group of $(\bG,A_0)$,
where $N_{\bG(\Q)}(H)$ is the normalizer of $H$ in $\bG(\Q)$.
For any $s\in W_0$ we choose a representative $w_s\in \bG(\Q)$.
Note that $W_0$ acts on $\levis$ by $sM=w_s M w_s^{-1}$. For $M\in\cL$ let
$W(M)=N_{\bG(\Q)}(M)/M$, which can be identified with a subgroup of $W_0$.

For any $L\in\cL(M)$ we identify $\af_L^\ast$ with a subspace of $\af_M^\ast$.
We denote by $\af_M^L$ the annihilator of $\af_L^\ast$ in $\af_M$. Then $r=\dim \af_0^{\bG}$ is the semisimple rank of $\bG$.
We set
\[
\levis_1(M)=\{L\in\levis(M):\dim\aaa_M^L=1\}
\]
and
\begin{equation}\label{f1}
\cF_1(M)=\bigcup_{L\in\levis_1(M)}\cP(L).
\end{equation}
We shall denote the simple roots of $(P,A_P)$ by $\Delta_P$. They are
elements of $X(A_P)_\Q$ and are canonically embedded in $\af_P^\ast$. Let
$\Sigma_P\subset \af_P^\ast$ be the set of reduced roots of $A_P$ on the Lie
algebra of $\bG$. For any $\alpha\in\rts_M$ we denote by $\alpha^\vee\in\aaa_M$
the corresponding co--root. Let $P_1$ and $P_2$ be parabolic subgroups with
$P_1\subset P_2$. Then $\af_{P_2}^\ast$ is embedded into $\af_{P_1}^\ast$, while
$\af_{P_2}$ is a natural quotient vector space of $\af_{P_1}$. The group
$M_{P_2}\cap P_1$ is a parabolic subgroup of $M_{P_2}$. Let $\Delta_{P_1}^{P_2}$
denote the set of simple roots of $(M_{P_2}\cap P_1,A_{P_1})$. It is a subset
of $\Delta_{P_1}$. For a parabolic subgroup $P$ with $P_0\subset P$ we write
$\Delta_0^P:=\Delta_{P_0}^P$. 

Let $\A$ (resp. $\A_f$) be the ring of adeles (resp. finite adeles) of $\Q$. 
We fix a maximal compact subgroup $\K=\prod_v \K_v = \K_\infty\cdot 
\K_{f}$ of $\bG(\A)=\bG(\R)\cdot \bG(\A_{f})$. We assume that the maximal 
compact subgroup $\K \subset \bG(\A)$ is admissible with respect to 
$M_0$ \cite[\S 1]{Ar88}. Let $\Ht_M: M(\A)\rightarrow\aaa_M$ be the 
homomorphism given by
\begin{equation}\label{homo-M}
e^{\sprod{\chi}{\Ht_M(m)}}=\abs{\chi (m)}_\A = \prod_v\abs{\chi(m_v)}_v
\end{equation}
for any $\chi\in X(M)$ and denote by $M(\A)^1 \subset M(\A)$ the kernel 
of $\Ht_M$. 

Let $\gf$ and $\kf$ denote the Lie algebras of $\bG(\R)$ and $\K_\infty$,
respectively. Let $\theta$ be the Cartan involution of $\bG(\R)$ with respect to
$\K_\infty$. It induces a Cartan decomposition $\mathfrak{g}= 
\mathfrak{p} \oplus \mathfrak{k}$. 
We fix an invariant bi-linear form $B$ on $\mathfrak{g}$ which is positive 
definite on $\mathfrak{p}$ and negative definite on $\mathfrak{k}$.
This choice defines a Casimir operator $\Omega$ on $\bG(\R)$,
and we denote the Casimir eigenvalue of any $\pi \in \Pi (\bG(\R))$ by 
$\lambda_\pi$. Similarly, we obtain
a Casimir operator $\Omega_{\K_\infty}$ on $\K_\infty$ and write $\lambda_\tau$ for 
the Casimir eigenvalue of a
representation $\tau \in \Pi (\K_\infty)$ (cf.~\cite[\S 2.3]{BG}).
The form $B$ induces a Euclidean scalar product $(X,Y) = - B (X,\theta(Y))$ on 
$\mathfrak{g}$ and all its subspaces.
For $\tau \in \Pi (\K_\infty)$ we define $\norm{\tau}$ as in 
\cite[\S 2.2]{CD}. Note that the restriction of the scalar product 
$(\cdot,\cdot)$ on $\gf$ to $\af_0$ gives $\af_0$ the structure of a 
Euclidean space. In particular, this fixes Haar measures on the spaces 
$\af_M^L$ and their duals $(\af_M^L)^\ast$. We follow Arthur in the 
corresponding normalization of Haar measures on the groups $M(\A)$ 
(\cite[\S 1]{Ar1}). 

Let $L^2_{\disc}(\Ai M(\Q)\bs M(\A))$ be the discrete part of 
$L^2(\Ai M(\Q)\bs M(\A))$, i.e., the
closure of the sum of all irreducible subrepresentations of the regular 
representation of $M(\A)$.
We denote by $\Pi_{\disc}(M(\A))$ the countable set of equivalence classes of 
irreducible unitary
representations of $M(\A)$ which occur in the decomposition of the discrete 
subspace $L^2_{\disc}(\Ai M(\Q)\bs M(\A))$ into irreducible representations.

Let $H$ be a topological group. We will denote by $\Pi(H)$ the set of equivalence
classes of irreducible unitary representations of $H$.

Next we introduce the space $\Co(G(\A)^1)$ of Schwartz functions. 
For any compact open subgroup
$K_f$ of $G(\A_f)$ the space $G(\A)^1/K_f$ is the countable disjoint union of
copies of $G(\R)^1=G(\R)\cap G(\A)^1$ and therefore, it is a differentiable
manifold. Any element $X\in\mathcal{U}(\gf^1_\infty)$ of the universal 
enveloping algebra of the Lie algebra $\gf_\infty^1$ of $G(\R)^1$ defines a
left invariant differential operator $f\mapsto f\ast X$ on $G(\A)^1/K_f$. Let
$\Co(G(\A)^1;K_f)$ be the space of smooth right $K_f$-invariant functions on
$G(\A)^1$ which belong, together with all their derivatives, to $L^1(G(\A)^1)$.
The space $\Co(G(\A)^1;K_f)$ becomes a Fr\'echet space under the seminorms
\[
\|f\ast X\|_{L^1(G(\A)^1)},\quad X\in\mathcal{U}(\gf^1_\infty).
\]
Denote by $\Co(G(\A)^1)$ the union of the spaces $\Co(G(\A)^1;K_f)$ as $K_f$ 
varies over the compact open subgroups of $G(\A_f)$ and endow 
$\Co(G(\A)^1)$ with the inductive
limit topology.

\section{Asymptotic expansion of the regularized trace}\label{sec-asyexp}
\setcounter{equation}{0}
Let $\bG$ be a reductive quasi-split group over $\Q$. We assume that its center $Z_{\bG}$ is $\Q$-split and let $A_{\bG}$ be the identity component of $Z_{\bG}(\R)$. Then $\bG(\A)= \bG(\A)^1\times A_{\bG}$ and $\bG(\R)=\bG(\R)^1\times A_{\bG}$ with $\bG(\R)^1= \bG(\A)^1\cap \bG(\R)$, a semisimple real Lie group. Let $\theta:\bG(\R)\longrightarrow \bG(\R)$ be a Cartan involution and $\K_\infty= \bG(\R)^{\theta}$ its fixed points. For each prime $p$ fix some maximal compact subgroup $\cpt_p$ of $\bG(\Q_p)$ and let $\cpt_f= \prod_{p}\cpt_p$. Let $K_f$ be a finite index subgroup of $\cpt_f$ and $X(K_f)=\bG(\Q)\backslash \bG(\A)^1/\K_\infty^0\cdot K_f$, where $\K_\infty^0$ is the connected identity component of $\K_\infty$. Let $r$ denote the split semisimple rank of $\bG$ so that $r=\dim\af_0^{\bG}$. 

We recall the definition of  the regularized trace. For that we adopt the 
notation from \cite[\S 11-12]{MM1}.
Let $\nu\colon \K_\infty\to \GL(V_\nu)$ be a finite dimensional unitary 
representation. Let $\tilde\Delta_\nu$ be the Bochner-Laplace operator  
attached to $\nu$ on the universal covering $\tilde X= \bG(\R)^1/\K_\infty^0$ of 
$X(K_f)$. Let $H_t^\nu: \bG(\R)^1\longrightarrow \GL(V_\nu)$ be the convolution 
kernel associated with $\tilde\Delta_\nu$, and let $h_t^\nu=\Tr H_t^\nu$. 
We extend $h_\nu^t$ to $G(\R)$ by $h_t^\nu(ag)=h_t^\nu(g)$ for all $a\in A_{\bG}$, $g\in \bG(\R)^1$. Let $\One_{K_f}: \bG(\A_f)\longrightarrow\C$ be the 
characteristic function of $K_f$. Put 
\begin{equation}\label{char-funct}
\chi_{K_f}:=\frac{\One_{K_f}}{\vol(K_f)}
\end{equation}
and
\[
 \phi^\nu_t(g)
 = h_t^\nu(g_\infty) \chi_{K_f}(g_f)
\]
for $g=g_\infty\cdot g_f\in \bG(\A)= \bG(\R)\cdot \bG(\A_f)$. 
Let $J_{\geo}$ denote the geometric side of Arthur's trace formula. The regularized trace of $e^{-t\Delta_\nu}$ is defined by
\begin{equation}\label{eq:def:reg:trace}
 \Tr_{\text{reg}}(e^{-t\Delta_\nu})
 = J_{\geo}(\phi_t^\nu).
\end{equation}
This is well-defined because $\phi_t^\nu$ and all its derivatives are in $L^1(\bG(\A)^1)$ so that $J_{\geo}(\phi_t^\nu)$ is well-defined by \cite{FLM1}.

\subsection{Reduction to unipotent distributions}
The proof of Theorem~\ref{thm:asymptotic:geom} rests on an asymptotic expansion of certain unipotent distributions $J_M^G(\CmO,\cdot)$, which will be introduced in \S \ref{sec:weighted} and  which are defined only for compactly supported  test functions. To state this result we first need to construct compactly supported test functions from $\phi_t^\nu$.

Let $d(\cdot, \cdot): \tilde X\times\tilde X\longrightarrow [0,\infty)$ be the geodesic distance on $\tilde X$, and put $r(g_\infty)=d(g_\infty x_0, x_0)$ where $x_0=\K_\infty\in \tilde X$ is the base points. Let $0<a<b$ be sufficiently small real numbers and let $\beta:\R\longrightarrow [0,\infty)$ be a smooth function supported in $[-b,b]$ such that $\beta(y)=1$ for $0\le |y|\le a$, and $0\le\beta(y)\le1$ for $|y|>a$. Define 
\begin{equation}\label{eq:def:psi}
 \psi^\nu_t(g_\infty) 
 = \beta(r(g_\infty)) h_t^\nu(g_\infty).
\end{equation}
and
\begin{equation}\label{eq:def:phi:tilde}
 \tilde\phi^\nu_t(g)
 = \psi^\nu_t(g_\infty)\chi_{K_f}(g_f)
\end{equation}
for $g=g_\infty\cdot g_f\in G(\A)=G(\R)\cdot G(\A_f)$.
Then $\tilde\phi^\nu_t\in C_c^\infty(\bG(\A)^1)$ and $\psi^\nu_t\in C^\infty_c(\bG(\R)^1)$.
By \cite[Proposition~12.1]{MM1} there is some $c>0$ such that for every $0<t\le 1$ we have
\begin{equation}\label{eq:geom:approx}
 \left|J_{\geo}(\phi_t^\nu)- J_{\geo}(\tilde\phi_t^\nu)\right|
 \ll e^{-c/t}.
\end{equation}
We note that in \cite[Sect. 12]{MM1} we made the assumption that $\bG=\GL(n)$
or $\bG=\SL(n)$. However, the proof of the  proposition holds without any 
restriction on $\bG$. The next result reduces the considerations to the 
unipotent contribution to the geometric side.
Before we state it, we recall the coarse geometric expansion of Arthur's trace formula \cite[\S 10]{Ar05}: Two elements $\gamma_1, \gamma_2\in G(\Q)$ are called coarsely equivalent if their semisimple parts (in the Jordan decomposition)  are conjugate in $\bG(\Q)$. Then for any $f\in C^\infty_c(\bG(\A)^1)$ we have
\[
 J_{\geo}(f)=\sum_{\of} J_{\of} (f),
\]
where $\of$ runs over the coarse equivalence classes in $\bG(\Q)$, and the distribution $J_{\of} $ is supported in the set of all $g\in \bG(\A)^1$ whose semisimple part is conjugate in $\bG(\A)$ to some semisimple element in $\of$. If $\of\neq\of'$, the supports of $J_{\of}$ and $J_{\of'}$ are disjoint.
Note that the set of unipotent elements in $\bG(\Q)$ constitute a single equivalence class $\of_{\text{unip}}$ and we write $J_{\text{unip}}=J_{\of_{\text{unip}}}$.

\begin{prop}\label{prop:replace:geom:unip}
If $K_f$ is neat and the support of $\beta$ is sufficiently small, then
\begin{equation}\label{eq:geom:unip}
 J_{\geo}(\tilde\phi_t^\nu)
 = J_{\text{unip}}(\tilde\phi_t^\nu).
\end{equation}
\end{prop}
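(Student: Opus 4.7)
The plan is to apply the coarse geometric expansion $J_{\geo}(\tilde\phi_t^\nu)=\sum_{\of}J_{\of}(\tilde\phi_t^\nu)$ and to show that the only surviving term is the unipotent one. Since each distribution $J_{\of}$ is supported on the $G(\A)$-conjugacy class of elements of $\of$, it suffices to prove that, provided $b$ is small enough and $K_f$ is neat, no $\gamma\in G(\Q)$ with non-trivial semisimple part has a $G(\A)$-conjugate inside $\supp\tilde\phi_t^\nu=\{g_\infty\in G(\R)^1:r(g_\infty)\le b\}\times K_f$.

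Suppose for contradiction that $x^{-1}\gamma x\in\supp\tilde\phi_t^\nu$ for some $\gamma\in G(\Q)$ with $\gamma_s\neq 1$ and some $x=x_\infty x_f\in G(\A)$. Fix once and for all a faithful representation $G\hookrightarrow\GL_N$. At finite places, $\gamma$ lies in the neat open compact subgroup $x_f K_f x_f^{-1}$ of $G(\A_f)$: each eigenvalue $\lambda$ of $\gamma$ is then a unit at every non-archimedean place, hence an algebraic unit, and by neatness the multiplicative group generated by the eigenvalues of $\gamma$ in $\overline{\Q}^\times$ is torsion-free. At the archimedean place, bi-$K_\infty$-invariance of $r$ together with the Cartan decomposition $G(\R)^1=K_\infty^0 A K_\infty^0$ forces the singular values of $x_\infty^{-1}\gamma x_\infty$ into $[e^{-b},e^b]$ (up to a fixed constant depending only on the metric normalization), and hence the moduli of its complex eigenvalues as well. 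Since $\gamma\in G(\Q)$ has a rational characteristic polynomial, the set of eigenvalues is Galois-stable, and the bound $e^{-b}\le|\sigma(\lambda)|\le e^b$ therefore propagates to every Galois conjugate $\sigma(\lambda)$ of every eigenvalue $\lambda$.

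This reduces the problem to the algebraic question of classifying algebraic units of degree at most $N$ whose Galois conjugates all lie in a thin annulus around the unit circle. By Kronecker's theorem the case $b=0$ leaves only roots of unity, and a compactness argument on the coefficients of the characteristic polynomial (or Smyth's uniform lower bound on the Mahler measure of non-cyclotomic algebraic integers of bounded degree) produces a threshold $b_0=b_0(N)>0$ such that the same conclusion persists for all $0<b<b_0$. Combined with neatness this forces every eigenvalue of $\gamma$ to equal $1$, so $\gamma_s=1$, contradicting the assumption. Choosing the support of $\beta$ to be contained in $(-b_0,b_0)$ from the outset therefore yields \eqref{eq:geom:unip}.

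The main obstacle is producing a single uniform threshold $b_0$ valid for all $\gamma$, $x_f$, and $t$ at once; the Kronecker-Smyth argument delivers exactly this, since $b_0$ depends only on $N$, neatness is stable under conjugation in $G(\A_f)$, and the product structure $\tilde\phi_t^\nu=\psi_t^\nu\otimes\chi_{K_f}$ cleanly separates the archimedean and non-archimedean constraints. The $t$-dependence of $\tilde\phi_t^\nu$ enters only through the heat kernel factor and is irrelevant to the support analysis.
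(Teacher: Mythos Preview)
Your core argument---bound the eigenvalues of $\rho(\gamma)$ as algebraic units via compactness of each $K_p$, bound their archimedean absolute values via the Cartan decomposition, invoke Kronecker plus a finiteness/Smyth argument to force roots of unity, then use neatness---is correct and is essentially the paper's strategy, arguably stated more cleanly (the paper takes a detour through bounded denominators in $M^{-N}\Z$ where your compactness observation gives algebraic units directly).

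There is, however, a genuine gap in the reduction step. You claim that $J_\of$ is supported on $G(\A)$-conjugates of elements of $\of\subset G(\Q)$, so that it suffices to rule out $x^{-1}\gamma x\in\supp\tilde\phi_t^\nu$ with $\gamma\in G(\Q)$. This is not the correct support: Arthur's truncated kernel contains terms $\int_{N_P(\A)}f(x^{-1}\gamma n x)\,dn$ with $\gamma\in M_P(\Q)\cap\of$ and $n\in N_P(\A)$ genuinely adelic, and $\gamma n$ is typically not conjugate to any rational element. The correct statement (recorded just before the proposition in the paper) is that $J_\of$ is supported on the set of $g\in G(\A)^1$ whose \emph{semisimple part} is $G(\A)$-conjugate to a semisimple element of $\of$.

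The fix is immediate and your eigenvalue analysis survives unchanged: take $g\in\supp\tilde\phi_t^\nu$ with $g_s$ conjugate to a semisimple $\sigma\in G(\Q)$, $\sigma\neq 1$. The characteristic polynomial of $\rho(g)$ equals that of $\rho(\sigma)$, so your $p$-adic unit bound (applied to $g_p\in K_p$), your archimedean annulus bound (applied to $g_\infty$), and the Kronecker step all go through for this common polynomial. For neatness, apply it to $g_f\in K_f$ rather than to a rational $\gamma$; under the standard adelic definition (Pink) this forces the root-of-unity eigenvalues to equal $1$, hence $\sigma$ unipotent, a contradiction.
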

\begin{proof}
Let $\rho:\bG\longrightarrow\GL(N)$ be a faithful representation of $\bG$. For each prime $p$  we can find $\nu_p\ge 0$ such that $\cpt_p\subseteq \rho^{-1}(\GL_N(p^{-\nu_p}\Z_p))$, and $\nu_p=0$ for all but finitely many $p$. Hence $K_f\subseteq \rho^{-1}(\GL_N(M^{-1}\hat{\Z}))$ with $M=\prod p^{\nu_p}$.

Let $\chi:\GL_N(\A)\longrightarrow \A^N$ be defined by mapping elements of $\GL_N(\A)$ onto the sequence of coefficients of their characteristic polynomials (omitting the coefficient $1$ of the highest degree monomial).
Let $f_\infty\in C^\infty_c(\bG(\R)^1)$. 
Suppose that $g\in \bG(\A)^1$ is in  the support of $f_\infty\cdot\One_{K_f}$ and that the semisimple part of $g$ is conjugate in $\bG(\A)$ to some $\sigma\in \bG(\Q)$. Then $\chi(\rho(g))=\chi(\rho(\sigma))\in\Q^N$, and further $\chi(\rho(g))\in p^{-N\nu_p}\Z_p^N$ for every prime $p$. Hence $\chi(\rho(g))\in M^{-N}\Z^N$. 
If we choose $f_\infty$ to be supported in a sufficiently small, bi-$\K_\infty$-invariant neighborhood of the identity of $\bG(\R)^1$ (the support of $f_\infty$ will possibly depend on $M$), we can arrange that $\chi(\rho(g))\in \Z^N$ so that the eigenvalues of $\rho(g)$ are all algebraic integers. Shrinking the support of $f_\infty$ even further if necessary, we can conclude the eigenvalues of $\rho(g)$ must all be roots of unity. Otherwise, the group generated by the matrix
$\rho(g)$ would not be contained in a compact set. 
By assumption, $K_f$ is neat so that these eigenvalues in fact need to be 
equal to $1$. The semisimple part of $\rho(g)$ therefore equals $I_N$ (the identity matrix in $\GL_N(\Q)$), that is, the semisimple part of $g$ equals the identity in $\bG(\A)^1$ so that $g$ is unipotent.

By the discussion above on the coarse geometric expansion, we can now find a bi-$\K_\infty$-invariant neighborhood $\Omega$ of the identity in $\bG(\R)^1$ such that whenever $f_\infty\in C^\infty_c(\bG(\R)^1)$ is supported in $\Omega$ we have $J_{\geo}(f)= J_{\text{unip}}(f)$ where $f=f_\infty\cdot\One_{K_f}\in C^\infty_c(G(\A)^1)$. Hence if we choose the support of $\beta$ in the definition of $\tilde\phi_t^\nu$ sufficiently small, we obtain the proposition.
\end{proof}

In light of \eqref{eq:geom:approx} and \eqref{eq:geom:unip} we therefore only need to study the asymptotic expansion of $J_{\text{unip}}(\tilde\phi_t^\nu)$ as $t\searrow0$ to prove Theorem~\ref{thm:asymptotic:geom}. This will be done in the following sections by using the fine geometric expansion of $J_{\text{unip}}$ involving weighted orbital integrals over the unipotent conjugacy classes in $\bG(\R)$.

\section{Preliminaries on unipotent conjugacy classes and integrals}
\label{sec:unipotent}
  Until \S \ref{sec:global} we will be concerned only with the real Lie group $\bG(\R)^1$ so that we write $G_\infty$ for $\bG(\R)^1$.
  
  \subsection{Notation} In abuse of our previous notation, we write $P_0=M_0U_0\subseteq G_\infty$ for the minimal parabolic $P_0(\R)\cap G_\infty= (M_0(\R)\cap G_\infty) (U_0(\R)\cap G_\infty)$ in $G_\infty$ until \S \ref{sec:global}.
  
  A parabolic subgroup $P$ of $G_\infty$ is called standard if it contains $P_0$, and semistandard if it contains $M_0$. A Levi subgroup $M$ in $G_\infty$ is called semistandard if it equals the Levi component containing $M_0$ of some semistandard parabolic subgroup. We write $\CmL$ for the set of semistandard Levi subgroups of $G_\infty$. If $M\in \CmL$, we write $\CmL(M)$ for the set of all $L\in\CmL$ with $M\subseteq L$ so that $\CmL=\CmL(M_0)$. 
  
  If $L\in\CmL$, then $P_0^L:=P_0\cap L = M_0 (U_0\cap L)$ is a minimal parabolic subgroup in $L$, and $P\mapsto P\cap L$ defines a surjective map from standard parabolic subgroups in $G_\infty$ to standard parabolic subgroups in $L$ (with respect to $P_0^L$). Similarly, we get surjective maps from semistandard parabolic and semistandard Levi subgroups in $G_\infty$ to such subgroups in $L$ (with respect to $M_0$).  If $M, L\in\CmL$, $M\subseteq L$, we write $\CmL^L(M)$ for the semistandard Levi subgroups in $L$ containing $M$. Further, we write $\CmF^L(M)$ for the set of all semistandard parabolic subgroups in $L$ containing $M$. If $L=G_\infty$, we write $\CmF(M)=\CmF^{G_\infty}(M)$. Though this clashes with our global notation previously used, we hope that it will not lead to any confusion.

 \subsection{Unipotent conjugacy classes}\label{sec:unip:cl}  
 We recall some basic facts on unipotent conjugacy classes, which can for example be found in \cite{Carter} or \cite{CoMc}.
 Let $M\in \CmL$ and let $\CmO\subseteq M$ be a unipotent $M$-conjugacy class in $M$. If $L\in\CmL(M)$, we write $\CmO^L$ for the unipotent conjugacy class induced from $\CmO$ in $M$ to $L$ along some semistandard parabolic subgroup in $L$ (the induced class is independent of that choice of parabolic).

Let $L\in\CmL(M)$.  Let $P^L\subseteq L$ be a Jacobson--Morozov parabolic associated with $\CmO^{L}$ in $L$ (see \cite[Remark 3.8.5]{CoMc}). We can choose $P^L$ to be standard, and we write $P^L= M^L U^L$ for its Levi decomposition with $M^L\supseteq M_0$.  
 
Let $\lf$ denote the Lie algebra of $L$, and let 
\[
 \lf=\bigoplus_{i\in \Z}\lf_{i}
\]
be the grading attached to the standard triple corresponding to our choice of Jacobson--Morozov parabolic $P^L$. Put $\uf^L_{j}:=\bigoplus_{i>j} \lf_{i}$. Let $X_0\in \lf_{2}$ such that $u_0:=e^{X_0}\in \CmO^{L}$.

 \subsection{Measures on unipotent classes} 
 We keep the notation from \S \ref{sec:unip:cl}. 
 To define distributions on the unipotent conjugacy classes, we need to fix measures. We fix once and for all some normalization of measures on $G_\infty$, on 
$\K_\infty$, on the semistandard Levi subgroups in $G_\infty$, and on the unipotent radicals of the semistandard parabolic subgroups. We choose those measures such that they are compatible with respect to Iwasawa decomposition. We also fix a normalization of the measures on the vector spaces $\lf_i$.

Let $L_{u_0}$ be the centralizer of $u_0$ in $L$. Then $L_{u_0}\backslash L$ is diffeomorphic to $\CmO^L$ and $L_{u_0}$ is unimodular being a unipotent group. The quotient measure on $L_{u_0}\backslash L$ (denoted by $d^*g$) defines an $L$-invariant measure on $\CmO^L$ which in fact is a Radon measure on $\CmO^L$ and has an explicit description: 
 \begin{prop}[\cite{Rao}]
There exists $c>0$ and a polynomial $\varphi: \lf_2\longrightarrow \C$ of degree $\dim\lf_1$ such that if $f\in C_c^\infty(L)$, then 
 \begin{equation}\label{eq:unipotent:orbital}
  \int_{L_{u_0}\backslash L} f(g^{-1} u_0 g)\, d^*g
  = c\int_{V_0}\int_{\uf^L_2} f_{K^L_\infty}(e^{X+Z}) |\varphi(X)|^{1/2}\, dZ\, dX
 \end{equation}
where $V_0\subseteq \lf_2$ is the orbit of $X_0$ under $M^L$ (a dense 
subset of $\lf_2$), and $f_{K^L_\infty}$ is defined by $f_{K^L_\infty}(g):
=\int_{K^L_\infty} f(k^{-1} g k)\, dk$, $g\in L$.
\end{prop}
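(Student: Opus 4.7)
The plan is to parametrize the orbital integral using the Jacobson--Morozov decomposition associated with $u_0 = e^{X_0}$ together with an Iwasawa-type decomposition of $L$ relative to the parabolic $P^L = M^L U^L$, and then to compute the Jacobian of the resulting change of variables.

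First, I would decompose $L = U^L M^L K^L_\infty$ and use this to unfold the invariant measure $d^*g$ on $L_{u_0}\backslash L$. Since $K^L_\infty$ acts on $f$ only by $k \mapsto k^{-1}(\cdot) k$, integrating this factor out produces the $K^L_\infty$-average $f_{K^L_\infty}$ appearing on the right-hand side of \eqref{eq:unipotent:orbital}. It remains to analyze the contribution from $U^L M^L$, which (modulo $L_{u_0}\cap U^L M^L$) covers a dense subset of the orbit.

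Next comes the key geometric input from Kostant--Jacobson--Morozov theory applied to the standard triple through $X_0$: the $M^L$-orbit $V_0 = \Ad(M^L) X_0$ is Zariski open and dense in $\lf_2$, and for every $X \in V_0$ the affine slice $X + \uf^L_2$ lies entirely inside the $L$-orbit of $X_0$ in $\lf$. Via the exponential map the assignment
\[
 V_0 \times \uf^L_2 \longrightarrow \CmO^L, \qquad (X,Z) \longmapsto e^{X+Z},
\]
therefore covers $\CmO^L$ up to a set of measure zero with generically finite fibers. This rests on Kostant's description of the centralizer (giving $L_{u_0} = L_{X_0}$) and of the transverse slice $X_0 + \uf^L_2$, together with the fact that $\Ad(U^L)$ moves $X \in \lf_2$ only inside $X + \uf^L_2$.

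Under the subsequent change of variables $m M^L_{X_0} \mapsto X = \Ad(m^{-1}) X_0$, the invariant measure on $M^L_{X_0}\backslash M^L$ pushes forward to a smooth measure on $V_0$ whose density, computed via the invariant form $B$, is proportional to $\lvert\varphi(X)\rvert^{1/2}\, dX$, where
\[
 \varphi(X) := \det\bigl(\ad(X)\colon \lf_{-1} \longrightarrow \lf_1\bigr)
\]
is a polynomial of degree $\dim \lf_1 = \dim \lf_{-1}$, nonvanishing precisely on $V_0$ by $\mathfrak{sl}_2$-representation theory. The square root reflects that only half of the $\lf_{\pm 1}$ directions contribute transversally: the bilinear form $(Y_1, Y_2) \mapsto B(X,[Y_1,Y_2])$ restricts to a symplectic form on $\lf_1$ (and on $\lf_{-1}$) for $X \in V_0$, and its Pfaffian gives the square root of $\varphi(X)$. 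Assembling this with the $\uf^L_2$-integration and the $K^L_\infty$-averaging yields \eqref{eq:unipotent:orbital} for some explicit $c > 0$.

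The hard part is the Jacobian bookkeeping behind the factor $\lvert\varphi(X)\rvert^{1/2}$: one must compare the orbital measure $d^*g$ to the product measure $dX\, dZ$ on $\lf_2 \oplus \uf^L_2$, simultaneously tracking the Jacobian of $m \mapsto \Ad(m^{-1})X_0$ on $M^L_{X_0}\backslash M^L$ and the twisting of $X$ by $\Ad(U^L)$ into $X + Z$. Both effects are governed by the action of $\ad(X)$ between the graded pieces $\lf_i$, and the cancellation producing a Pfaffian rather than a full determinant is precisely the content of Rao's original computation in \cite{Rao}.
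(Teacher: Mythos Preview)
The paper does not supply its own proof of this proposition; it is simply quoted from Rao \cite{Rao}, so there is no in-paper argument to compare against. Your sketch is a faithful outline of Rao's original computation: the Iwasawa factorization $L = P^L K^L_\infty$ to produce $f_{K^L_\infty}$, the parametrization of the orbit via $(m,u)\mapsto e^{\Ad(u^{-1})\Ad(m^{-1})X_0}$, the identification of $V_0\times\uf^L_2$ with a dense open piece of $\CmO^L$ using the surjectivity of $\ad X\colon\lf_j\to\lf_{j+2}$ for $j\ge -1$, and the appearance of $|\varphi(X)|^{1/2}$ as a Pfaffian coming from the symplectic form $B(X,[\,\cdot\,,\,\cdot\,])$ on $\lf_{-1}$ (or equivalently $\lf_1$). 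The degree count $\deg\varphi=\dim\lf_1$ is correct since each matrix entry of $\ad X\colon\lf_{-1}\to\lf_1$ is linear in $X\in\lf_2$.

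One small point worth tightening: you should check that the full centralizer $L_{u_0}$ is contained in $P^L$ (not just its identity component), so that the $K^L_\infty$-integration really factors out cleanly; over $\R$ this uses that $L_{X_0}$ is the product of a reductive piece in $M^L$ and a unipotent piece in $U^L$, which is part of Kostant's description but deserves a sentence. Beyond that, your final paragraph is honest: the Jacobian bookkeeping that produces exactly $|\varphi(X)|^{1/2}$ rather than some other power is the genuine content of \cite{Rao}, and a complete proof would have to reproduce that calculation rather than cite it.
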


If we want to emphasize the dependence on $L$, we write $V_0^L$, $\lf^L$, $\varphi^L$ etc. If $Q=LV$ is a semistandard parabolic subgroup, we might also write $V_0^Q:=V_0^{L}$, $\lf^Q:=\lf^L$, $\varphi^Q:=\varphi^L$ etc.

\subsection{Behavior under induction}
Let $\CmO\subseteq M$, $L\in \CmF(M)$ be as before. We can induce in stages, that is, $\CmO^{G_\infty} = \left(\CmO^L\right)^{G_\infty}$.
The invariant measure on $\CmO^{G_\infty}$ is then given by a constant multiple of 
\begin{equation}\label{eq:measure:induced}
 \int_{N_Q}\int_{V_0^L}\int_{\uf^L_2} f_{\K_\infty}(e^{X+Z} n) 
|\varphi^L(X)|^{1/2}\, dZ\, dX\, dn
\end{equation}
for any $f\in C_c^\infty(G_\infty)$ where $Q\in \CmF(L)$ is such that $L=L_Q$ is the Levi component of $Q$, $N_Q$ the unipotent radical of $Q$, and 
$f_{\K_\infty}(g)=\int_{\K_\infty}(k^{-1}gk)\, dk$.

\begin{remark}\label{rem:dimensions}
The dimension of a unipotent orbit can be computed in terms of the dimensions of the grading coming from the attached standard triple. 
More precisely, $\dim \CmO^L= 2 \dim \uf_1^L + \dim \lf_1$, see \cite[Lemma~4.1.3]{CoMc}. Taking into account that $\dim V_0^L=\dim \lf_2$, and 
$\deg \varphi^L = \dim \lf_1$  we get
\[
 \dim \CmO^L = 2\left(\dim \uf_2^L + \dim V_0^L \right) + \deg\varphi^L,
\]
in particular, the dimension is even.
Suppose that $Q\in\CmF(M)$ is any semistandard parabolic subgroup with Levi component $L$ and unipotent radical $N$, $Q=LN$.
Then $\dim \CmO^{G_\infty}= 2\dim N + \dim\CmO^L$ so that
\[
\dim\CmO^{G_\infty} = 2\left(\dim N + \dim \uf_2^L + \dim V_0^L\right)  + \deg\varphi^L.
\]
\end{remark}

\section{Weighted unipotent integrals}\label{sec:weighted}
\subsection{Introduction} Arthur's fine geometric expansion and his splitting formula (see \S\S\ref{sec:fine:exp}-\ref{sec:splitting}) describe $J_{\text{unip}}$ as a linear combination of certain products of real and $p$-adic weighted unipotent integrals. For our purposes we only need to be concerned with the archimedean case for which we follow \cite{Ar88}: Let $f\in C^\infty_c(G_\infty)$ and let $\CmO$ be a unipotent conjugacy class in $M$. The archimedean weighted orbital integrals $J_M^{G_\infty}(f,\CmO)$ can be defined as sum of integrals over $\CmO^{G_\infty}$ against certain non--invariant measure.
Those non--invariant measures can be described as follows: Using the proof of \cite[Corollary~6.2]{Ar88} we have
\begin{equation}\label{eq:unipotent:weighted}
 J_M^{G_\infty}(f, \CmO)
 = \sum_{Q\in\CmF(M)} c(Q,\CmO) \int_{N_Q} \int_{V_0^{Q}} \int_{\uf^{Q}_2} 
f_{\K_\infty}(e^{X+Z} n) w_{M,\CmO}^Q(e^{X+Z}) |\varphi^{Q}(X)|^{1/2}\, dZ\, dX \, dn\, dk,
\end{equation}
where the notation is as follows:
\begin{itemize}
\item $f_{\K_\infty}(g)=\int_{\K_\infty} f(k^{-1}gk)\, dk$   
\item $w_{M,\CmO}^Q$ is a certain weight function discussed in \cite{Ar88}. We will study this weight function in more detail below, 
\item $c(Q,\CmO)>0$ are suitable constants coming from the normalization of measures in \eqref{eq:unipotent:orbital} and \eqref{eq:measure:induced}, 
\item for $Q\in\CmF(M)$, $Q=L_QN_Q$  denotes its  Levi 
 decomposition with $L_Q$ its Levi component, $L_Q\supseteq M$, and $N_Q$ its 
unipotent radical.
\end{itemize}
Set $\uf_{Q,1}= \uf_1^{L_Q}\oplus \nf_Q$ with $\nf_Q$ the Lie algebra of $N_Q$. We  extend $\varphi^{L_Q}$ and $w_{M,\CmO}^Q(\exp(\cdot))$ to all of $\uf_{Q,1}$ by projecting to $\uf_1^{L_Q}$ along $\nf_Q$. Then we can write the integral above also as
\begin{equation}\label{eq:unipotent:combined}
 J_M^{G_\infty}(f, \CmO)
 = \sum_{Q\in\CmF(M)} c(Q,\CmO) \int_{\uf_{Q,1}} f_{\K_\infty}(e^Y) w_{M,\CmO}^Q(e^Y) |\varphi^{L_Q}(Y)|^{1/2} \, dY.
\end{equation}
Note that for each $Q$, the integral is over the same unipotent orbit $\CmO^{G_\infty}$ but with different weight functions.

\subsection{An asymptotic expansion of the weights}\label{sec:setup}
 We now study the functions $w_{M,\CmO}^Q(\cdot)$ from \eqref{eq:unipotent:weighted} in more detail. For convenience of notation, we only consider the case $Q=G_\infty$. We write $w_{M,\CmO}= w_{M,\CmO}^{G_\infty}$.

We fix an embedding $\iota : G_\infty\hookrightarrow \GL_n(\R)^1$ that satisfies certain properties. Write $H=\iota(G_\infty)$, $S=\iota(T(\R))$, $N_0= \iota(U(\R))$, where $T(\R)$ is a maximal split torus in $G_\infty$, and $N_0(\R)$ the unipotent radical of our fixed minimal parabolic subgroup of $G_\infty$. Then we assume $\iota$ to satisfy the following:
\begin{itemize}
\item $H$ is self-adjoint;
 \item $S$ is contained in the group of diagonal matrices $T_0\subseteq \GL_n(\R)^1$;
 \item $N_0$ is contained in the group of unipotent upper triangular matrices $U_0\subseteq \GL_n(\R)^1$;
 \item The restriction of the positive roots of $(T_0,U_0)$ to $S$ are positive roots of $(S,N_0)$; every root of $(S,N_0)$ is obtained this way.
\end{itemize}
The existence of such an embedding follows from \cite[Proposition 3.13]{PlRa}. In the following we will write $\kh$ for the Lie algebra of $H$, $G_n=\GL_n(\R)^1$ and $\kg_n$  for the Lie algebra of $G_n$.

Let $P=MN$ be a semi-standard parabolic subgroup in $H$ and let $\CmO\subseteq M$ be a unipotent conjugacy class. Let $P_1=MN_1$ be another semi-standard parabolic subgroup with the same Levi component $M$.
Let $\varpi$ be a weight on $\ka_M/\ka_H$ that is an extremal weight for an 
irreducible representation $\Lambda_\varpi$ of $H$ on a finite dimensional vector space $V_{\varpi}$, defined over $\R$, which 
is also $P$-dominant. Let $H_P:H\longrightarrow\ka_M$ be the Iwasawa 
projection.

Then for any $h\in H$,  Arthur defines  a weight function 
$v_P(\varpi, h)$ by $v_P(\varpi, h)= e^{-\varpi(H_P(h))}$ and as shown in 
\cite[(3.3)]{Ar88}, it satisfies
\[
v_P(\varpi, h)= e^{-\varpi(H_P(h))}=\|\Lambda_\varpi(h^{-1})\phi_\varpi\|,\quad h\in H,
\]
for $\phi_\varpi$ a unit vector in the representation space $V_\varpi$ of $\Lambda_\varpi$  with respect to a fixed norm $\|\cdot\|$ on $V_\varpi$.

Let $\pi=u \nu\in P_1$ with $u\in \CmO$ and $\nu\in N_1$. Let $a\in A_{M}$ be regular. Then there is a unique $n\in N_1$ such that
\begin{equation}\label{eq:conjugation}
 a\pi = n^{-1} a u n.
\end{equation}
Therefore,  $a\mapsto \Lambda_\varpi(n^{-1})\phi_\varpi$ is a rational function on a dense subset of, and hence on all of $A_M/A_H$.

By Arthur's construction \cite[p.\ 238--239]{Ar88} there exist unique integers $k_\beta\ge0$ such that 
\begin{equation}\label{eq:defW}
\lim_{a\rightarrow 1} \prod_{\beta\in \Sigma_P\cap \Sigma_{\overline{P_1}}} (a^{\beta}-a^{-\beta})^{k_\beta} v_P(\varpi, n) 
\end{equation}
exists and is non-zero on a dense subset of $\CmO N_1$. Here $\Sigma_P$ denotes the set of reduced roots of $A_M$ on $N$, and similarly, $\Sigma_{\overline{P_1}}$ is the set of reduced roots on the opposite parabolic. The limit is in fact of the form  $\|W_\varpi(1, \pi)\|$ with $W_\varpi(1,\pi)\in V_\varpi$ a polynomial on $\CmO N_1$.

The weight functions $w_M(\pi)$ appearing in the weighted unipotent integrals are then of the form
\begin{equation}\label{eq:weightw}
w_{M,\CmO}(\pi) = \sum_{\Omega} c_\Omega \prod_{\varpi\in \Omega} \log\|W_\varpi(1, \pi)\|
\end{equation}
where $\Omega$ runs over all finite subsets of extremal weights of $\ka_M$, and $c_\Omega\in\C$ are coefficients which vanish for all but finitely many of the $\Omega$s. Note that $w_{M,\CmO}(\pi)$ attains a finite value on a dense subset of $\pi$s. 

\subsection{Extending polynomials}\label{sec:extpolynomials}
We recall the notion of a Jacobson--Morozov parabolic subalgebra. Recall that $\CmO\subseteq M$ denotes a unipotent conjugacy class, and let $\CmN \subseteq \km$ be the corresponding nilpotent orbit.  By the Jacobson--Morozov Theorem 
\cite[3.3]{CoMc} we can find an $\mathfrak{sl}_2$-triple $(h_0, x_0, y_0)$ for $\CmN$ in $\km$ with $h_0$ semisimple and $x_0, y_0$ nilpotent. We decompose $\km$ into eigenspaces under $h_0$, that is $\km=\bigoplus_{k\in\Z} \km_k$ with $\km_k=\{X\in\km\mid [h_0, X] = k X\}$. 
Let $\tilde M$ denote the centralizer of $A_M$ in $G_n$ and let $\tilde \km\subseteq \kg$ be its Lie algebra. Then $\tilde M$ is a semi--standard Levi subgroup of $G_n$ with $M\subseteq \tilde M$, $\km\subseteq \tilde\km$.
 Then $h_0$ also defines a grading on $\tilde \km$, $\tilde\km=\bigoplus_{k\in\Z} \tilde\km_k$. 

Let $\kq^M=\bigoplus_{k\ge0} \km_k \subseteq \km$ be the corresponding Jacobson--Morozov parabolic subalgebra. Let $\kv^M=\bigoplus_{k>0}\km_k$ and $\kv_1^M=\bigoplus_{k>1}\km_k$.
We also define $\tilde\kv^M=\oplus_{k>0} \tilde\km_k$ and $\tilde\kq^M=\oplus_{k\ge0}\tilde\km_k$ so that $\kv^M$ is a sub--vectorspace of $\tilde\kv^M$. 

Then each $u\in \CmO$ can be written as $k^{-1} e^X k$ for $k\in K\cap M$ and a unique $X\in \kv_1^M$. Note that $W_\varpi(1, \pi)$ is invariant under the conjugation by elements of $K\cap M$ by \cite[(3.7)]{Ar88} so that we can assume that $\pi=e^{X + Y}$ with $X\in \kv_1^M$ and $Y\in \kn_1$ where $\kn_1$ is the Lie algebra of $N_1$. By \cite[p.\ 253]{Ar88} $W_\varpi(1,\pi)$ is a polynomial in $X+Y$. Similarly, $v_P(\varpi, e^Y)$ is a polynomial in $Y\in \kn_1$. 

There are semistandard Levi subgroups $\tilde P= \tilde M \tilde N$ and $\tilde P_1=\tilde M\tilde N_1$ of $G_n$ such that $N\subseteq \tilde N$ and $N_1\subseteq \tilde N_1$. Let $\tilde\kn_1$ be the Lie algebra of $N_1$ so that $\kn_1$ is a sub--vectorspace of $\tilde\kn_1$. We have a canonical isomorphism $\tilde\kn_1\simeq\R^{\dim\tilde\kn_1}$ via the coordinates given by the matrix entries. This also gives a canonical inner product on $\tilde\kn_1$ so that we can find the orthogonal complement of $\kn_1$ in $\tilde\kn_1$. We can therefore extend any polynomial on $\kn_1$ trivially to a polynomial on $\tilde\kn_1$ along that complement. 
A similar construction holds for polynomials on $\kv_1^M$ so that they can be extended trivially to polynomials on $\tilde\kv_1^M$ as well.

In particular, we can extend $X+Y\mapsto W_\varpi(1, e^{X+Y})$ to a polynomial on all of $\tilde\kv_1^M+ \tilde\kn_1$, and $Y\mapsto v_P(\varpi, e^Y)$ to a polynomial on all of $\tilde\kn_1$. Since $X+Y$ is nilpotent,  $\log(\Mid+X+Y)$ and 
$e^{X+Y}$ are finite series. Hence we can also consider $W_\varpi(1, \Mid + X+Y)
=W_\varpi(1, e^{\log(\Mid + X+Y)})$ and $v_P(\varpi, \Mid + Y)
=v_P(\varpi, e^{\log(\Mid + Y)})$ with 
$X\in\tilde\kv_1^{\tilde M}$ and $Y\in \tilde\kn_1$ which are again both 
polynomials. 

\subsection{In which the group is $GL(n)$}
In this section we first prove a slightly more general version of \cite[Lemma 7.2]{MM1}. We change the notation for this section slightly: We assume that $H=G_n$ so that in particular, $P=MN$ and $P_1=MN_1$ be semi--standard parabolic of $G_n$ with the same Levi component $M$. Let $a\in A_M$ be regular. Suppose that $\pi\in P_1$ is a unipotent element and write $\pi= u\nu$ with $u\in M$ and $\nu\in N_1$ unipotent. Then there is a unique $n\in N_1$ such that
\begin{equation}\label{eq:relation}
a\pi= n^{-1} au n,
\end{equation}
that is, we have a well-defined polynomial map $\mathcal{U}_MN_1 \ni\pi\mapsto n\in N_1$ depending on $a$ where $\mathcal{U}_M$ denotes the set of all unipotent elements in $M$. 

 Let $\Phi$ denote the set of all roots of $T_0$ on  
$\kg_n$. 
Let $\Phi_M\subseteq \Phi$ denote the subset of roots which are not trivial when restricted to $A_M$, and let $\Phi_1\subseteq \Phi_M$ be the subset of roots that are positive with respect to $N_1$. Let $\Phi^+$ denote a choice of positive roots in $\Phi$ such that $\Phi_1\subseteq \Phi^+$. Let $\kn'\subseteq \km$ denote the nilpotent subalgebra corresponding to $\Phi^+\smallsetminus \Phi_1$. Then $\kn'\oplus \kn_1$ is the nilpotent radical of the minimal parabolic subalgebra of  $\kg_n$ corresponding to $\Phi^+$.

If $Z\in \kg_n$ is any matrix and $\beta\in\Phi$ a root, we write $Z_\beta$ for the matrix entry of $Z$ corresponding to $\beta$.
Write $u=\Mid +X_0$, $\nu= \Mid + X$, and $n= \Mid + Y$ with $X_0, X, Y$ suitable nilpotent matrices. Up to conjugation with $K\cap M$ we can assume that $X_0\in \kn'$ which we will do from now on.

 We show the following:
\begin{prop}\label{prop:coordinates}
 Let $\beta\in \Phi_1$. 
Then for each subset $\underline{\alpha}\subseteq \Phi_1$ there is a polynomial $Q_{\beta,\underline{\alpha}}(Z; a^{\alpha}, \alpha\in\underline{\alpha})$, $Z:= \pi-\Mid= X_0+X +X_0X$ , such that:
\begin{itemize}
\item\noindent
\[
Y_\beta = \sum_{\underline{\alpha}\subseteq \Phi_1} \frac{Q_{\beta, \underline{\alpha}}(aZa^{-1}; a^\alpha, \alpha\in\underline{\alpha})}{\prod_{\alpha\in\underline{\alpha}} (a^{\alpha}-1)},
\]
where $\underline{\alpha}$ runs over all subsets of $\Phi_1$;
\item  as a function in the matrix entries of $Z$, $Q_{\beta, \underline{\alpha}}$ is a homogeneous polynomial of degree $\#\underline{\alpha}$; 
\item  if $Q_{\beta,\underline{\alpha}}(aZa^{-1}; a^{\alpha}, \alpha\in\underline{\alpha})$ does not vanish identically, then for $X$ in general position, its limit as $a\rightarrow 1$ is non-zero.
\end{itemize}
\end{prop}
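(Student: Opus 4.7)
The plan is to set up the conjugation equation \eqref{eq:relation} in explicit coordinates and then solve for $Y$ iteratively by induction on the height of the root $\beta$. Writing $n=\Mid+Y$, $u=\Mid+X_0$ and $\pi=\Mid+Z$, the equivalent relation $na\pi=aun$ expands to $(1+Y)(a+aZ)=(a+aX_0)(1+Y)$. Cancelling the common $a$ on both sides gives
\begin{equation*}
aY-Ya+aX_0Y-YaZ=aX+aX_0X.
\end{equation*}
Multiplying on the right by $a^{-1}$ and using that $a\in A_M$ commutes with $X_0\in\km$, this becomes
\begin{equation*}
\tilde Y-Y=\tilde X+X_0\tilde X-X_0\tilde Y+Y\tilde Z,\qquad \tilde W:=aWa^{-1}.
\end{equation*}
Extracting the $\beta$-entry for $\beta\in\Phi_1$ and using that $\tilde Y_\beta=a^\beta Y_\beta$, that $(X_0)_\beta=0$, and that $a^\gamma=1$ for every $\gamma\in\Phi^+\smallsetminus\Phi_1$ (such roots are trivial on $A_M$), one finds $\tilde Z_\beta=a^\beta Z_\beta$ and $(X_0\tilde X)_\beta=a^\beta(X_0X)_\beta$, so that $\tilde X_\beta+(X_0\tilde X)_\beta=\tilde Z_\beta$. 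The recursion takes the form
\begin{equation*}
(a^\beta-1)\,Y_\beta=\tilde Z_\beta-a^\beta\sum_\gamma(X_0)_\gamma\,Y_{\beta-\gamma}+\sum_{\beta'}Y_{\beta'}\,\tilde Z_{\beta-\beta'},
\end{equation*}
where $\gamma$ runs over $\Phi^+\smallsetminus\Phi_1$ with $\beta-\gamma\in\Phi_1$, and $\beta'$ runs over $\Phi_1$ with $\beta-\beta'\in\Phi^+$. Crucially, in both sums every index has strictly smaller height than $\beta$.

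Before running the induction I would record two ancillary observations. First, the Levi projection $MN_1\to M$, $\pi=u\nu\mapsto u$, is a polynomial map (the block-diagonal of $\pi$ in suitable coordinates), and since $(X_0X)_\gamma=0$ for $\gamma\in\Phi^+\smallsetminus\Phi_1$ (the sum of a root trivial on $A_M$ and one in $\Phi_1$ always lies in $\Phi_1$), one has the identity $(X_0)_\gamma=Z_\gamma$ for every such $\gamma$. Hence the entries $(X_0)_\gamma$ are \emph{linear} polynomials in the entries of $Z$. Second, each $\tilde Z_{\beta-\beta'}$ entering the second sum is also linear in the entries of $Z$ (equal to $a^{\beta-\beta'}Z_{\beta-\beta'}$).

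The main argument is then a direct induction on the height of $\beta\in\Phi_1$. For $\beta$ of minimum height, both correction sums on the right are empty, giving
\begin{equation*}
Y_\beta=\frac{\tilde Z_\beta}{a^\beta-1},
\end{equation*}
which is the claimed expansion with $\underline{\alpha}=\{\beta\}$ and $Q_{\beta,\{\beta\}}(\tilde Z;a^\beta)=\tilde Z_\beta$, homogeneous of degree $1=\#\underline{\alpha}$ in the entries of $Z$. For the inductive step one substitutes the expansions for the $Y_{\beta-\gamma}$ and $Y_{\beta'}$ furnished by the inductive hypothesis and brings the result to the common denominator $(a^\beta-1)\prod_{\alpha\in\underline{\alpha}}(a^\alpha-1)=\prod_{\alpha\in\underline{\alpha}'}(a^\alpha-1)$ with $\underline{\alpha}'=\underline{\alpha}\cup\{\beta\}$. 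Each substitution multiplies the numerator by either $(X_0)_\gamma$ or $\tilde Z_{\beta-\beta'}$, both linear in $Z$, so the numerator degree rises from $\#\underline{\alpha}$ to $\#\underline{\alpha}+1=\#\underline{\alpha}'$. Collecting contributions with the same $\underline{\alpha}'$ produces the polynomial $Q_{\beta,\underline{\alpha}'}$ and yields the first and second bullet points.

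The third bullet follows from the fact that $Q_{\beta,\underline{\alpha}}(\tilde Z;a^\alpha,\alpha\in\underline{\alpha})$ is a polynomial in the matrix entries of $Z$ and in the quantities $a^\alpha$; specializing $a\to 1$ replaces $\tilde Z$ by $Z$ and each $a^\alpha$ by $1$, giving a polynomial in the entries of $Z$ alone, and a nonzero polynomial is automatically nonzero on a Zariski-dense open subset, hence on an $X$ in general position. The main obstacle I expect is the bookkeeping through the induction: one must simultaneously track the evolving index set $\underline{\alpha}$, the homogeneity degree in $Z$, and the denominator structure, and verify that multiple pairs $(\gamma,\underline{\alpha})$ which collect into the same $\underline{\alpha}'$ assemble into a single well-defined coefficient $Q_{\beta,\underline{\alpha}'}$. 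Once the height ordering and the linear-in-$Z$ character of the coefficients $(X_0)_\gamma$ and $\tilde Z_\gamma$ are in place, the induction is essentially formal.
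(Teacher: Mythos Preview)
Your approach is correct for the first two bullet points and constitutes a genuine alternative to the paper's argument. Both proofs set up the conjugation identity in coordinates and solve a triangular system for the $Y_\beta$ inductively, but the implementations differ. The paper rewrites the relation as $\Mid+X_0+a(Z-X_0)a^{-1}=n^{-1}(ana^{-1})+n^{-1}X_0(ana^{-1})$ and expands $n^{-1}=\sum_k(-Y)^k$, which brings in higher powers of $Y$; it then inducts on a ``nilpotency degree'' on $\Phi_1$ (the largest $k$ for which the $\beta$-entry of a generic $A^k$, $A\in\kn_1$, is nonzero). You instead multiply through by $n$ to obtain a relation linear in $Y$, and you induct on ordinary root height in $\Phi^+$. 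Your recursion $(a^\beta-1)Y_\beta=\tilde Z_\beta-a^\beta\sum_\gamma(X_0)_\gamma Y_{\beta-\gamma}+\sum_{\beta'}Y_{\beta'}\tilde Z_{\beta-\beta'}$ is cleaner, and your observation $(X_0)_\gamma=Z_\gamma=\tilde Z_\gamma$ for $\gamma\in\Phi^+\setminus\Phi_1$ is exactly what is needed to feed the induction. One point you leave implicit but should state: the inductive hypothesis guarantees that every $\alpha\in\underline\alpha$ occurring for $Y_{\beta-\gamma}$ or $Y_{\beta'}$ has height strictly less than that of $\beta$, so $\beta\notin\underline\alpha$ and $\#(\underline\alpha\cup\{\beta\})=\#\underline\alpha+1$ really holds.

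There is, however, a gap in your argument for the third bullet. You write that specializing $a\to1$ yields ``a polynomial in the entries of $Z$ alone, and a nonzero polynomial is automatically nonzero on a Zariski-dense open subset''. But the hypothesis is only that $Q_{\beta,\underline\alpha}$ is nonzero as a polynomial in $Z$ \emph{and} the $a^\alpha$; it does not follow that its specialization at $a^\alpha=1$ is a nonzero polynomial in $Z$. For instance, a combination of terms with different $a^\alpha$-monomials could cancel at $a=1$. The paper addresses exactly this point through its structural remark that in its rearrangement the correction terms in $\sum_k(-1)^{k-1}Y^{k-1}[X_0aYa^{-1}-YX_0]$ are never divisible by any factor $a^\alpha-1$, which is what prevents such cancellation. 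In your setup you would need an analogous statement, e.g.\ by tracking that the only $a$-dependence introduced at each step is the single monomial factor $a^\beta$ (never $a^\beta-1$), and then arguing that distinct contributions to a given $\underline\alpha'$ produce distinct $Z$-monomials so no cancellation occurs. Without this, the third bullet is asserted rather than proved.
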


\begin{proof}
We introduce a grading on the set $\Phi_1$: We say that $\beta\in\Phi_1$ has degree $k$, $k\ge1$, if the $\beta$-coordinate of $A^k$ is non-zero, but that of $A^{k+1}$ is zero for a general matrix $A\in \kn_1$.
We write $\Phi_1^{(k)}$ for the set of $\beta\in\Phi_1$ of degree $k$. Note that $\Phi_1^{(k)}=\emptyset$ when $k\ge n$.

We rearrange the relation \eqref{eq:relation} as follows:
\begin{multline*}
\Mid + X_0+ a (Z-X_0) a^{-1} 
= n^{-1}ana^{-1} + n^{-1} X_0 a n a^{-1}\\
= \Mid +\sum_{k=1}^{n-1} (-1)^{k-1}Y^{k-1} \left[a Y a^{-1} -Y\right]
+  X_0 + \sum_{k=1}^{n-1} (-1)^{k-1}Y^{k-1} \left[X_0a Y a^{-1} -YX_0\right]\\
\end{multline*}
 For $k\ge1$, the non-zero entries in the matrices $Y^{k-1} \left[a Y a^{-1} -Y\right]$ all correspond to $\beta$ in $\Phi_1^{(l)}$, $l\ge k$. Moreover, the matrix entry in
\begin{equation}\label{eq:conjugatedsum}
\sum_{k=1}^{n-1} (-1)^{k-1}Y^{k-1} \left[a Y a^{-1} -Y\right]
\end{equation}
 corresponding to a root $\beta$ of degree $k$ is of the form
\[
(a^\beta-1) Y_\beta + \sum_{\underline{\alpha}} (a^{\alpha_1}-1) C_{\underline{\alpha}} \prod_{\alpha\in\underline{\alpha}} Y_{\alpha}
\]
where the sum runs over all tuples $\underline\alpha=(\alpha_1,\ldots)$ of pairwise different elements in  $\bigcup_{l<k} \Phi_1^{(l)}$ such that $2\le\sum \deg\alpha_i\le k$, and $C_{\underline{\alpha}}\in\R$ are suitable coefficients. Note that this in particular means that the sum over the $\underline{\alpha}$ in \eqref{eq:conjugatedsum} contains only monomials of degree $\ge2$, and is in fact empty if $k=1$. 

The sum $\sum_{k=1}^{n-1} (-1)^{k-1}Y^{k-1} \left[X_0 a Y a^{-1} -YX_0\right]$ has a similar structure as \eqref{eq:conjugatedsum}, except that each monomial has exactly one linear factor consisting of a matrix entry of $X_0$.  In particular, as a polynomial in $Y$ and $X_0$, there is no linear factor, and the matrix entry corresponding to a root of degree $k$ has only factors consisting of $Y_\beta$ with $\deg \beta<k$ and $X_0$. Moreover, no matrix entry  of 
\[
\sum_{k=1}^{n-1} (-1)^{k-1}Y^{k-1} \left[X_0a Y a^{-1} -YX_0\right]
\]
is divisible by any of the factors $a^{\alpha}-1$ (unless it is identically $0$), since in $\left[X_0a Y a^{-1} -YX_0\right]$ the terms $a^\beta Y_\beta$ and $Y_\beta$ cannot occur with non--vanishing coefficient in the same matrix entry.

We can now argue inductively in the degree of $\beta$. If $\beta\in\Phi_1^{(1)}$, then 
\[
(a^{\beta}-1) Y_\beta = a^{\beta} Z_\beta.
\]
The assertion then follows by induction from the above description of the matrix entries. 
\end{proof}

\subsection{Back to $H$}\label{subsec:weights}
We return to  the notation of \S\ref{sec:setup} and \eqref{eq:conjugation}. Write $u=\Mid+X_0$, $\pi= \Mid +Z$ and $n=\Mid +Y$ with $X_0$, $Z$, and $Y$ nilpotent matrices. 

\begin{prop}

There exists $\nu$ such that 
\begin{equation}
\|W_\varpi(1, \Mid + sZ)\| 
= s^\nu  \|W_\varpi(1, \pi)\|
\end{equation}
for all  $s>0$.
\end{prop}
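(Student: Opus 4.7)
The identity is equivalent to showing that the (extended) polynomial $P(Z):=W_\varpi(1,\Mid+Z)$ is homogeneous of some integer degree $\nu$ in $Z$. Given homogeneity, $W_\varpi(1,\Mid+sZ)=s^\nu W_\varpi(1,\Mid+Z)$ in $V_\varpi$ and the norm identity follows. The converse is elementary: comparing coefficients of $s$ in the polynomial identity $\|P(sZ)\|^2=s^{2\nu}\|P(Z)\|^2$ forces each homogeneous component $P_d$ of $P$ with $d\neq\nu$ to vanish identically.

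My plan for proving homogeneity is to unwind the defining limit $W_\varpi(1,\pi)=\lim_{a\to 1}D(a)\Lambda_\varpi(n^{-1})\phi_\varpi$, where $D(a):=\prod_\beta(a^\beta-a^{-\beta})^{k_\beta}\sim 2^{|k|}\prod_\beta(a^\beta-1)^{k_\beta}$ as $a\to 1$, with $|k|:=\sum_\beta k_\beta$. By Proposition~\ref{prop:coordinates}, applied in the ambient $\GL_n$ via the embedding $\iota$, each matrix entry $Y_\beta$ of $n-\Mid$ admits the expansion
\[
Y_\beta=\sum_{\underline\alpha\subseteq\Phi_1}\frac{Q_{\beta,\underline\alpha}(aZa^{-1};a^\alpha,\alpha\in\underline\alpha)}{\prod_{\alpha\in\underline\alpha}(a^\alpha-1)},
\]
with $Q_{\beta,\underline\alpha}$ homogeneous of degree $\#\underline\alpha$ in the entries of $Z$. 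Substituting this into the finite polynomial expansion of $\Lambda_\varpi(n^{-1})\phi_\varpi$ in the $Y_\beta$'s and regrouping by $Z$-homogeneous degree gives $D(a)\Lambda_\varpi(n^{-1})\phi_\varpi=\sum_d D(a)R_d(a,Z)$, where $R_d$ is a sum of monomials of $Z$-degree $d$, each with a denominator $\prod_\gamma(a^\gamma-1)^{m_\gamma}$ satisfying $\sum_\gamma m_\gamma=d$. Since distinct $d$'s involve disjoint $Z$-monomials, the limit $a\to 1$ commutes with the sum: $P_d(Z):=\lim_{a\to 1}D(a)R_d(a,Z)$ exists for every $d$, and $P=\sum_d P_d$.

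Taking $a=e^{tH}$ along a generic one-parameter ray as $t\to 0$, each monomial in $D(a)R_d(a,Z)$ scales like $t^{|k|-d}$. For $d<|k|$ every monomial carries a positive power of $t$, so $P_d=0$; for $d=|k|$ one obtains an (in general nonzero) polynomial of degree $|k|$ in $Z$. The main technical obstacle is the case $d>|k|$, in which individual summands diverge as $t^{-(d-|k|)}$ and a nonzero $P_d$ could in principle arise from cancellations among divergent contributions. My plan for ruling this out is a degree-by-degree analysis exploiting the uniqueness of the exponents $k_\beta$ from \cite[pp.\ 238--239]{Ar88}, together with the fact that $\|P\|$ is finite on a dense subset of $\CmO N_1$ (which propagates to each $P_d$). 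A cleaner alternative, which I would fall back on if the cancellation analysis proves too delicate, is to realize the scaling $Z\mapsto sZ$ as conjugation by a suitable one-parameter subgroup of the ambient diagonal torus of $\GL_n$ and deduce the homogeneity of $P$ directly from the equivariance of $v_P(\varpi,\cdot)$ under that torus action together with the extremal-weight property $\Lambda_\varpi(c)\phi_\varpi=c^\varpi\phi_\varpi$. Once homogeneity of some degree $\nu=|k|$ is established, the proposition follows.
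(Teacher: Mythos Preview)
Your approach is essentially the paper's: both arguments feed the scaling formula for $Y_\beta$ from Proposition~\ref{prop:coordinates} into the limit defining $W_\varpi(1,\pi)$. The paper's own proof is extremely terse here---after recording that $Z\mapsto sZ$ sends $Y_\beta$ to $\sum_{\underline\alpha}s^{\#\underline\alpha}Q_{\beta,\underline\alpha}/\prod(a^\alpha-1)$, it simply asserts ``hence by definition of $\|W_\varpi(1,\pi)\|$ we can find $\nu\ge 0$'' without isolating a single homogeneous degree. Your write-up is in fact more careful: the decomposition $P=\sum_d P_d$ by $Z$-degree, together with the observation that each monomial in $R_d$ carries a denominator of total order exactly $d$, is the right mechanism, and the paper does not spell it out.

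Where your proposal is still incomplete is precisely the point you flag yourself. Your argument that $P_d=0$ for $d<|k|$ is fine, and $P_{|k|}$ is the generic survivor. For $d>|k|$ you note that individual summands diverge like $t^{|k|-d}$ and that cancellation must occur for $P_d$ to be finite---but finiteness does not give vanishing, and neither of your proposed fixes closes this. The ``uniqueness of the $k_\beta$'' only tells you that $D(a)$ is the minimal clearing factor for the \emph{total} expression $\Lambda_\varpi(n^{-1})\phi_\varpi$; it says nothing about the pole order of the individual $Z$-homogeneous pieces $R_d$, which is what you would need. Your fallback is more problematic: the dilation $Z\mapsto sZ$ on a generic nilpotent upper-triangular matrix is \emph{not} conjugation by any element of the diagonal torus of $\GL_n$, since $\Ad(d)$ rescales the $(i,j)$ entry by $d_i/d_j$, and these ratios cannot all equal $s$ simultaneously once entries with different $j-i$ are present. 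So the equivariance argument as stated does not go through.

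In short, you have reproduced the paper's line of reasoning and have been more explicit about its structure, but the step you single out as ``the main technical obstacle'' is genuinely not resolved by what you have written---and the paper's proof does not resolve it either, it simply does not raise it.
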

\begin{remark}
Note that $\Mid + sZ$ is not necessarily contained in $\CmO$ (it does not even have to be contained in $H$), but as we discussed above, we can extend $W_\varpi(1, \pi)$ to a polynomial on all of $\Mid+ \tilde\kv_1^M+ \tilde \kn_1$.
\end{remark}
\begin{proof}
We want to use Proposition \ref{prop:coordinates}. The element $a$ in \eqref{eq:conjugation} defines a semistandard Levi subgroup $\tilde M$ in $G_n$ by  taking its centralizer in $G_n$. Then $a$ is a regular element of the center of $\tilde M$ and $M\subseteq \tilde M$. Moreover, as before, there are semistandard parabolic subgroups $\tilde P= \tilde M \tilde N$ and $\tilde P_1= \tilde M \tilde N_1$ of $G_n$ such that $N\subseteq \tilde N$ and $N_1\subseteq \tilde N_1$.  As explained above, we can extend $v_P(\varpi, n)$ to a polynomial in $Y\in \tilde\kn_1$, $n=\Mid +Y$. Each coordinate $Y_\beta$ of $Y$ can be described according to Proposition \ref{prop:coordinates} so that under the change $Z\mapsto sZ$  the coordinate becomes
\[
Y_\beta = \sum_{\underline{\alpha}\subseteq \Phi_1} s^{\#\underline{\alpha}} \frac{Q_{\beta, \underline{\alpha}}(aZa^{-1}; a^\alpha, \alpha\in\underline{\alpha})}{\prod_{\alpha\in\underline{\alpha}} (a^{\alpha}-1)}.
\]
Hence by definition of $\|W_\varpi(1,\pi)\|$ in \eqref{eq:defW} we can find $\nu\ge0$ such that
\[
\|W_\varpi(1, \Mid + sZ)\|  
= s^{\nu}  \|W_\varpi(1, \Mid + Z)\|.
\]
\end{proof}

Together with \eqref{eq:weightw} this immediately implies the following:
 \begin{cor}\label{prop:weight:fct}
 There exist 
 \begin{itemize}
  \item constants $r,  q\ge0$,
  \item polynomials $p_1, \ldots,p_q:\tilde\kv^M\oplus\tilde\kn_1\longrightarrow \R$ which do not vanish on an open dense subset of $Z$ with $\Mid+Z\in\CmO N_1$, and
  \item  complex polynomials $Q_j$ in $q$-many variables, $j=0,\ldots, r$,
 \end{itemize}
  such that for all $Z\in \kv^M\oplus\kn_1$ and $s>0$ we have
\[
 w_{M,\CmO}(\Mid + sZ)
 = \sum_{i=0}^r (\log s)^i Q_i(\log |p_1(Z)|, \ldots, \log |p_q(Z)|).
\]
 \end{cor}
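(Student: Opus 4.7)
The plan is to combine the scaling relation from the preceding Proposition with the explicit expression \eqref{eq:weightw} for $w_{M,\CmO}$, and then read off the claimed polynomial structure by straightforward expansion.

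Concretely, for each extremal weight $\varpi$ appearing with nonzero coefficient in \eqref{eq:weightw}, the preceding Proposition supplies a non-negative integer $\nu_\varpi$ such that
\[
\|W_\varpi(1, \Mid + sZ)\| = s^{\nu_\varpi}\,\|W_\varpi(1, \Mid+Z)\|
\]
for all $s>0$ and all $Z$ in the relevant domain. Taking logarithms gives
\[
\log\|W_\varpi(1, \Mid + sZ)\| = \nu_\varpi\log s + \log\|W_\varpi(1, \Mid+Z)\|,
\]
and substituting this into \eqref{eq:weightw} yields
\[
w_{M,\CmO}(\Mid + sZ) = \sum_{\Omega} c_\Omega \prod_{\varpi\in\Omega}\bigl(\nu_\varpi\log s + \log\|W_\varpi(1,\Mid+Z)\|\bigr).
\]
Since only finitely many coefficients $c_\Omega$ are nonzero, only finitely many distinct weights $\varpi_1,\ldots,\varpi_q$ occur in this sum. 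Setting $r := \max\{\,|\Omega| : c_\Omega \neq 0\,\}$ and expanding each product as a polynomial in $\log s$ of degree $|\Omega|$, I collect terms to obtain
\[
w_{M,\CmO}(\Mid + sZ) = \sum_{i=0}^{r}(\log s)^{i}\,\tilde Q_i\bigl(\log\|W_{\varpi_1}(1,\Mid+Z)\|,\ldots,\log\|W_{\varpi_q}(1,\Mid+Z)\|\bigr)
\]
for suitable complex polynomials $\tilde Q_0,\ldots,\tilde Q_r$ in $q$ variables.

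It remains to convert the norms of the vector-valued polynomials $W_{\varpi_j}(1,\Mid+\cdot)$ into logarithms of scalar polynomials in $Z$. As established in \S\ref{sec:extpolynomials}, each $W_{\varpi_j}(1,\Mid+\,\cdot\,)$ extends to a polynomial map with values in the real representation space $V_{\varpi_j}$, so that
\[
p_j(Z) := \|W_{\varpi_j}(1,\Mid+Z)\|^{2}
\]
is a real-valued polynomial on $\tilde\kv^M\oplus\tilde\kn_1$. Then $\log\|W_{\varpi_j}(1,\Mid+Z)\| = \tfrac{1}{2}\log|p_j(Z)|$, and absorbing these factors of $\tfrac{1}{2}$ into the coefficients of the $\tilde Q_i$ produces the desired polynomials $Q_i$. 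The nonvanishing of each $p_j$ on a dense open subset of $\{Z : \Mid+Z\in\CmO N_1\}$ is ensured by Arthur's construction \eqref{eq:defW}, where the limit defining $W_{\varpi_j}(1,\pi)$ is nonzero on a dense subset of $\CmO N_1$; since $p_j$ is a polynomial, its nonvanishing on a dense subset is equivalent to nonvanishing on a dense open subset.

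I expect no genuine obstacle: the substantive content is entirely absorbed into the preceding Proposition (the scaling relation) and into the polynomial extension carried out in \S\ref{sec:extpolynomials}. What remains is bookkeeping — expanding a finite product and renaming coefficients. The only point requiring mild care is verifying that $V_{\varpi_j}$ is defined over $\R$ (stated at the start of \S\ref{sec:setup}), which guarantees that the squared norm $p_j$ is a polynomial with real values rather than a more complicated real-analytic object.
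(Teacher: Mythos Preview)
Your proof is correct and follows exactly the approach the paper intends: the paper's own proof is the single sentence ``Together with \eqref{eq:weightw} this immediately implies the following,'' and you have carefully unpacked that implication by substituting the scaling relation into \eqref{eq:weightw}, expanding the finite products, and identifying $p_j(Z)=\|W_{\varpi_j}(1,\Mid+Z)\|^2$ as the required real polynomials. Your observation that the squared norm is a genuine polynomial (because $V_{\varpi_j}$ is a real vector space and the norm is Euclidean) and your appeal to \eqref{eq:defW} for the generic nonvanishing are the only points where any justification is needed, and you handle both correctly.
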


\section{Test functions}\label{sec:test:fcts}

\subsection{Linearizing the metric}
 Let 
\begin{equation}\label{dist-fct}
r(g)=d(g\K_\infty, \K_\infty),
\end{equation}
where $d(\cdot, \cdot)$ denotes the geodesic distance function on $\tilde X= G_\infty/\K_\infty$.  We continue to assume that $G_\infty$ is a real semisimple Lie group, and we fix an embedding of $G_\infty$ into $\GL_n(\R)^1$ for some $n\ge1$ as at the beginning of \S\ref{sec:setup}. From now on, we will identify $G_\infty$ and all its subgroups with their image in $\GL_n(\R)^1$ instead of writing $H$ etc., and the Lie algebra $\kg$ will be identified with its corresponding image in $\Liesl_n(\R)$. In addition to the properties satisfied by $G_\infty$ in \S\ref{sec:setup}, we can choose the embedding furthermore such that  
 \begin{itemize}
\item  $\K_\infty\subseteq \rO(n)$.
\item $\af_0^G$ is contained in the subalgebra of diagonal matrices in $\Liesl_n(\R)$. 
\item If $\kg = \kk\oplus\ks$ is the Cartan decomposition of $\kg$, then $\ks$ is contained in the symmetric matrices in $\Liesl_n(\R)$, and $\kk$ in the skew-symmetric matrices.
\item  For $X=(X_{ij})_{i,j=1,\ldots, n}\in\kg$, put $\|X\|^2=\sum_{i,j} |X_{ij}|^2$. Then $\|\cdot\|$ coincides with the norm on $\ks$ obtained from the Killing form on $\kg$. (More generally, we define $\|\cdot\|$ similarly on all of $\Liesl_n(\R)$.)
\item The unipotent radical $U_0$ of the minimal parabolic subgroup $P_0$ is contained in the group of  unipotent upper triangular matrices of $\GL_n(\R)^1$. 
 \end{itemize}
Let $d_n(\cdot, \cdot)$ denote the geodesic distance function on $\GL_n(\R)^1/\rO(n)$, and let $r_n(g)=d_n(g\rO(n), \rO(n))$. By our choice of embedding, the Cartan decomposition on $G_\infty$ with respect to $K_\infty$ and on  $\GL_n(\R)^1$ with respect to $\rO(n)$ are compatible, that is, if $g = k_1ak_2\in G_\infty=K_\infty A_0^G K_\infty$, then $k_1ak_2$ is also the Cartan decomposition of $g$ in $\GL_n(\R)^1$. Hence $r(g)=r(a)$ and $r_n(g)=r_n(a)$. By \cite[(4.6.25)]{GaVa} we have  $r(a)=\|\log a\|$ and $r_n(a)=\|\log a\|$ so that $r$ and $r_n$ coincide on $G_\infty\subseteq \GL_n(\R)^1$. This allows us to use results on the geodesic distance $r_n$ from \cite{MM1} also for $r$. Recall from \cite[Lemma 12.2]{MM1} that if $g=I_n+X\in\GL_n(\R)^1$ with $X$ a nilpotent upper triangular matrix, then
\begin{equation}\label{eq:distance:small:X}
 r_n(I_n+X) = \frac{1}{4} \|X\|^2 + O(\|X\|^3)
\end{equation}
as $X\rightarrow 0$. Here $I_n\in\GL_n(\R)^1$ denotes the identity matrix. Hence if $X$ varies over matrices such that $I_n+X\in G_\infty$, then the same is true for $r_n(I_n+X)$ by our above remark.

We also need to understand how $r(g)$ behaves as $g$ varies over unipotent matrices in $G_\infty$ that become unbounded:
\begin{lem}\label{lem:lower:bound:global}
 We have
 \[
  r(g)\ge \frac{1}{2} \log\left(1+\frac{1}{n}\|g-I_n\|^2\right)
 \]
for all unipotent $g\in G_\infty$. 
\end{lem}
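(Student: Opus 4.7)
The plan is to reduce to the Cartan decomposition and then carry out an elementary calculation using the unipotency of $g$.

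First, pass to $\GL_n(\R)^1$: since $r$ and $r_n$ agree on $G_\infty$, it is enough to bound $r_n(g)$ for a unipotent $g \in \GL_n(\R)^1$. Write the Cartan decomposition $g = k_1 a k_2$ with $k_1,k_2 \in \rO(n)$ and $a = \operatorname{diag}(a_1,\dots,a_n)$, $a_i > 0$. Then $r_n(g) = \|\log a\|$ by \cite[(4.6.25)]{GaVa}, which was already used to obtain \eqref{eq:distance:small:X}.

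Second, compute $\|g - I_n\|^2$ in terms of the $a_i$. Because the Frobenius norm is invariant under left and right multiplication by elements of $\rO(n)$, one has $\operatorname{tr}(g^T g) = \operatorname{tr}(a^2) = \sum_i a_i^2$. Since $g$ is unipotent, every eigenvalue of $g$ equals $1$, so $\operatorname{tr}(g) = n$. Expanding,
\[
\|g - I_n\|^2 = \operatorname{tr}(g^T g) - 2\operatorname{tr}(g) + n = \sum_{i=1}^n a_i^2 - n.
\]
Thus the desired inequality is equivalent to
\[
r(g) \;\ge\; \tfrac{1}{2}\log\!\Bigl(\tfrac{1}{n}\sum_{i=1}^n a_i^2\Bigr).
\]

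Third, estimate the right-hand side. For each index $i$,
\[
a_i^2 \;=\; e^{2\log a_i} \;\le\; e^{2|\log a_i|} \;\le\; e^{2\|\log a\|} \;=\; e^{2r(g)},
\]
since $|\log a_i|\le \sqrt{\sum_j (\log a_j)^2} = \|\log a\|$. Averaging over $i$ gives $\tfrac{1}{n}\sum_i a_i^2 \le e^{2r(g)}$, and taking logarithms finishes the proof.

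I do not expect any genuine obstacle here: once the identity $\|g-I_n\|^2 = \sum a_i^2 - n$ (valid because $g$ is unipotent so $\operatorname{tr}(g)=n$) is isolated, the inequality follows from the trivial bound $|\log a_i|\le \|\log a\|$. The only point to be attentive to is that the unipotency hypothesis is used twice — once so that $\operatorname{tr}(g) = n$, and implicitly through $\det g = 1$ which keeps $a\in A_0^G$ — and that the bi-invariance of the Frobenius norm under $\rO(n)$ is what allows us to pass from $g$ to $a$ in the computation of $\operatorname{tr}(g^T g)$.
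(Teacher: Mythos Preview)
Your proof is correct and follows essentially the same route as the paper's. The only cosmetic difference is that the paper first conjugates $g$ by an element of $\K_\infty$ into upper-triangular form $I_n+N$ so as to read off $\tr(g^tg)=n+\|N\|^2$ from $\tr N=0$, whereas you obtain the same identity more directly from $\tr(g)=n$; the Cartan-decomposition estimate $a_i^2\le e^{2\|\log a\|}=e^{2r(g)}$ and the conclusion are identical.
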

\begin{proof}
Let $g\in G_\infty$ be unipotent and write $g=e^{Y_0}$ with $Y_0\in\kg$ nilpotent. There exists $k\in \K_\infty$ such that $Y:=\Ad(k) Y_0$ is an upper triangular nilpotent matrix. 
We can therefore write $e^Y=I_n+N$ for some nilpotent upper triangular $N\in \Liesl_n(\R)$. Let $X\in\af_0^G$ and $k_1, k_2\in \K_\infty$ such that $g=k_1 e^X k_2$. Then $\|X\|= r(g) = r(k e^{Y_0} k^{-1})= r(e^Y)$, where the first equality follows from \cite[(4.6.25)]{GaVa}.
Then
\[
 \tr e^{2X} = \tr(g^tg) = \tr e^{Y_0^t}e^{Y_0} =\tr e^{Y^t} e^Y 
 = \tr (I_n+N)^t(I_n+N)= n + \tr N^tN
 = n + \|N\|^2.
\]
Let $X_1,\ldots, X_n$ be the diagonal entries of $X$. Then 
\[
 \tr e^{2X}= \sum_{j=1}^n e^{2X_j}
 \le \sum_{j=1}^n e^{2|X_j|}
 \le n e^{2\|X\|}.
\]
Hence
\[
 e^{\|X\|} \ge (1+\frac{1}{n}\|N\|)^{1/2}
\]
so that 
\[
 r(g)=\|X\|\ge \frac{1}{2}\log(1+\frac{1}{n}\|N\|^2)
\]
as asserted.
\end{proof}

\begin{lem}
Let $\tilde\uf_0$ denote the vector space of all nilpotent upper triangular matrices in $\Liegl_n(\R)$. Then there exists $x_0>0$ such that for all $X\in\tilde\uf_0$ with $\|X\|\ge x_0$  we have
\[
\|e^X-I_n\|\ge \frac{1}{\sqrt{2}}\|X\|.
\]
\end{lem}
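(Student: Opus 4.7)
The plan is to use a scaling/compactness argument on the unit sphere $S \subset \tilde\uf_0$. Since every $X \in \tilde\uf_0$ is nilpotent with $X^n = 0$, the map $X \mapsto e^X - I_n = \sum_{k=1}^{n-1} X^k/k!$ is polynomial, and writing $X = tY$ with $Y \in S$ and $t = \|X\|$ reduces the claim to showing
\[
\varphi_Y(t) := \|e^{tY} - I_n\|^2 \geq \frac{t^2}{2}
\]
for all $Y \in S$ and all $t \geq x_0$, with $x_0$ independent of $Y$.

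For fixed $Y$, the polynomial $\varphi_Y(t)$ has degree exactly $2 k_0(Y)$, where $k_0(Y) := \max\{k \geq 1 : Y^k \neq 0\}$, with positive leading coefficient $\|Y^{k_0(Y)}\|^2/(k_0(Y)!)^2$. I would split into two pointwise cases. If $k_0(Y) = 1$, then $Y^2 = 0$, so $e^{tY} - I_n = tY$ and $\varphi_Y(t)/t^2 = \|Y\|^2 = 1$ identically. If $k_0(Y) \geq 2$, then $\varphi_Y(t)/t^2 \sim t^{2(k_0(Y)-1)}\|Y^{k_0(Y)}\|^2/(k_0(Y)!)^2$ as $t\to\infty$, which tends to $+\infty$. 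In either case there exists $T_Y > 0$ with $\varphi_Y(t)/t^2 \geq 1/2$ for all $t \geq T_Y$.

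The main difficulty, and likely the hardest step, is the uniformity of $T_Y$ over the compact sphere $S$. A naive pointwise argument fails because the index function $k_0 : S \to \{1,\ldots,n-1\}$ is only upper semi-continuous: as $Y$ approaches the closed stratum $\{Y^2 = 0\}$ from a higher-nilpotency stratum, the leading coefficient of $\varphi_Y(t)$ shrinks, and the interplay between the linear term $tY$ and higher-order terms $t^k Y^k/k!$ can temporarily depress $\varphi_Y(t)/t^2$ below $1/2$. I plan to stratify $S$ by the value of $k_0$ and argue stratum by stratum, using the polynomial (hence continuous) dependence of the coefficients of $\varphi_Y(t)$ on $Y$ together with compactness to extract a stratum-wise constant. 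The transition behaviour near lower strata will be handled by the explicit observation that the $(i,i+1)$-entries of $e^X - I_n$ coincide exactly with those of $X$, so that $\sum_i |X_{i,i+1}|^2$ provides a non-cancellable contribution to $\varphi_Y(t)$ which dominates the lower-order cross terms on a neighbourhood of the lower stratum. Taking $x_0$ to be the maximum of the stratum-wise constants should then yield the claimed uniform bound.
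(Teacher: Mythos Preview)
Your strategy---rescale to the unit sphere, analyse ray by ray, then invoke compactness---is precisely the paper's approach. The paper abstracts to any non-negative polynomial $P$ on $\R^N$ with $P(x)=\|x\|^2+(\text{terms of degree}\ge 3)$ and $P(x)\to\infty$, observes that for each unit vector $x$ the one-variable polynomial $Q_x(t)=P(tx)-t^2$ is either identically zero or of degree $\ge 3$ with positive leading coefficient, and then asserts without further argument that compactness of the sphere yields a uniform threshold $t_0$. You have correctly identified that uniformity step as the crux; the paper simply glosses over it.

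That step cannot be salvaged, because the lemma is false. For $n=3$ set
\[
X_t=\begin{pmatrix} 0 & t & -t^2/2\\ 0 & 0 & t\\ 0 & 0 & 0\end{pmatrix},\qquad t>0.
\]
Then $(X_t^2)_{13}=t^2$ is the only nonzero entry of $X_t^2$, so $e^{X_t}-I_3=X_t+\tfrac12 X_t^2$ has above-diagonal entries $t,0,t$, giving $\|e^{X_t}-I_3\|^2=2t^2$, while $\|X_t\|^2=2t^2+t^4/4$. Hence
\[
\frac{\|e^{X_t}-I_3\|}{\|X_t\|}=\Bigl(\frac{8}{8+t^2}\Bigr)^{1/2}\longrightarrow 0.
\]
This same example defeats your proposed rescue via the superdiagonal: the contribution $\sum_i|(X_t)_{i,i+1}|^2=2t^2$ is indeed non-cancellable and in fact equals all of $\|e^{X_t}-I_3\|^2$, yet it is $o(\|X_t\|^2)$. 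After normalising, $Y_t=X_t/\|X_t\|$ approaches the stratum $\{Y^2=0\}$ at the point $-E_{13}$, and the superdiagonal entries of $Y_t$ tend to zero. (For the paper's abstract formulation, $P(x,y)=(y^2-x)^2+y^2$ satisfies all three hypotheses but $P(y^2,y)/\|(y^2,y)\|^2\to 0$.)

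What does hold, and would serve the same purpose downstream, is the weaker bound $\|e^X-I_n\|\ge c_n\|X\|^{1/(n-1)}$ for large $\|X\|$: apply the trivial estimate $\|\log(I+N)\|\le C_n\|N\|^{n-1}$ (valid for $\|N\|\ge 1$) to $N=e^X-I_n$.
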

\begin{proof}
We consider a more abstract situation: Let $P:\R^N\longrightarrow\R$ be a non-negative polynomial satisfying the following two properties: 
\begin{itemize}
\item $P(x)\longrightarrow \infty$ as $\|x\|\longrightarrow\infty$, where $\|x\|$ denotes the usual Euclidean norm on $\R^N$, and 
\item $P(x)$ can be written as $P(x) = \|x\|^2 + \sum_{\alpha} a_\alpha x^\alpha$ with $\alpha$ running over all multiindices $\alpha=(\alpha_1,\ldots, \alpha_N)$ of degree $3\le |\alpha| = \sum_i\alpha_i\le \deg P$.
\end{itemize}
The function $\tilde\uf_0\ni X\mapsto\|e^X-I_n\|^2$ is a polynomial exactly of this form. It will therefore suffice to show that for such polynomials we have $P(x)\ge \|x\|^2/2$ for $\|x\|\ge x_0$ for some $x_0>0$. 

Fix $x\in\R^N$ with $\|x\|=1$ and set $Q_x(t) = P(tx)-t^2$ for $t\in\R$. If $Q_x$ vanishes identically in $t$, we are done. If $Q_x$ does not vanish identically, then $Q_x$ is a non-trivial polynomial of degree $\ge3$ in $t$. Since $P(tx)\longrightarrow +\infty$ as $t\rightarrow\infty$, we must also have $Q_x(t)\longrightarrow+\infty$ for $t\rightarrow\infty$. Hence there exists $t_x>0$ such that $Q_x(t)\ge0$ for all $t\ge t_x$, that is, $P(tx)\ge t^2$ for $t\ge t_x$. Since the set of all $x\in\R^N$ with $\|x\|=1$ is compact, we can find $t_0>0$ such that $P(x)\ge \|x\|^2/2$ for all $x\in\R^N$ with $\|x\|\ge t_0$. This finishes the  proof.
\end{proof}

\subsection{Asymptotic expansion of the heat kernel}\label{sec:asymp:heat}

We recall the following asymptotic expansion of $h_t^\nu$ which, for example, can be found in \cite[Corollary~10.4]{MM1}:
If $\psi\in C^\infty(\R)$ is non-negative, equals $1$ around $0$ and has sufficiently small support, 
then for any sufficiently large $N$ and any $0<t\le 1$ we have
\begin{equation}\label{eq:asympt:kernel}
 h_t^\nu(g)= (4\pi t)^{-d/2} \psi(r(g)) \exp\left(-r(g)^2/(4t)\right) \sum_{n=0}^N a_n^\nu(g) t^n + O\left(t^{N+1-d/2}\right)
\end{equation}
where $a_n^\nu\in C^\infty(G_\infty)$ are suitable functions and the implied constant depends on the function $\psi$ but not on $N$. 
We further note a uniform upper bound for $h_t^\nu$ (see, for example, \cite[Corollary~10.2]{MM1}): There exists $C>0$ such that
\begin{equation}\label{eq:uniform:upper:heat}
 \left|h_t^\nu(g)\right|
 \le C t^{-d/2} e^{-r(g)^2/(4t)}
\end{equation}
for all $g\in G_\infty$ and all $0<t\le 1$.

\section{Proof of Proposition \ref{prop:asymp:unipotent:distr}}\label{sec:asympt:weight:orb}
The unipotent distribution $J_{\text{unip}}(\tilde\phi_t^\nu)$ can be written as a linear combination of certain weighted unipotent integrals $J_M^G(\CmO, \tilde\phi_t^\nu)$, see \S\ref{sec:fine:exp}. It will turn out (see \S\ref{sec:splitting}) that for us only the integrals $J_M^G(\CmO, \psi_t^\nu)$ will be relevant. For these we have the following:

\begin{prop}\label{prop:asymp:unipotent:distr}
Let $P=MU\in\CmF$, and let $\CmO\subseteq M(\R)$ be a unipotent conjugacy class in $M(\R)$. 
Let $\CmO^G\subseteq \bG(\R)$ be the unipotent conjugacy class in $\bG(\R)$ induced from $M(\R)$ along $P(\R)$.
Let $d_\CmO^G =\dim\CmO^G$ and $r_M=\dim\ka_M^G$.
Then there exist constants $b_{ij}=b_{ij}(M,\CmO)\in\C$, $j\ge0$, $0\le i\le r_M$ such that for every $0<t<1$ 
\[
 J_M^G(\CmO, \psi_t^\nu)
 \sim t^{-d/2  + d_\CmO^G/4} \sum_{j=0}^\infty \sum_{i=0}^{r_M} b_{ij} t^{j/2} (\log t)^i.
\]
Moreover, the coefficients $b_{ij}$ are uniformly bounded.
\end{prop}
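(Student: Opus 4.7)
The plan is to analyse each summand of the decomposition \eqref{eq:unipotent:combined} separately. Fix $Q = L_Q N_Q \in \CmF(M)$ and consider
\[
I_Q(t) = c(Q,\CmO)\int_{\uf_{Q,1}} (\psi_t^\nu)_{\K_\infty}(e^Y)\, w_{M,\CmO}^Q(e^Y)\, |\varphi^{L_Q}(Y)|^{1/2}\, dY.
\]
Since $h_t^\nu$ is $\K_\infty$-conjugation invariant and $\beta\equiv 1$ near the identity, I would first argue that $I_Q(t)$ agrees with the same integral with $h_t^\nu$ in place of $(\psi_t^\nu)_{\K_\infty}$ up to an $O(e^{-c/t})$ error, controlled via \eqref{eq:uniform:upper:heat} and Lemma \ref{lem:lower:bound:global}. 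This reduction lets me work with the full heat kernel on $\uf_{Q,1}$.

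The key step is the rescaling $Y = \sqrt{t}\,Y'$. Using the distance estimates of \S 6.1 one expands $r(e^{\sqrt{t}Y'})^2/(4t) = Q_{L_Q}(Y') + \sqrt{t}\,\rho_1(Y') + t\,\rho_2(Y') + \ldots$ for some positive definite quadratic form $Q_{L_Q}$ on $\uf_{Q,1}$, so that the Gaussian factor of the heat kernel becomes a fixed Gaussian on $\uf_{Q,1}$ modulated by an analytic series in $\sqrt{t}$ with polynomial coefficients in $Y'$. The Jacobian contributes $t^{\dim\uf_{Q,1}/2}$, while expanding $\varphi^{L_Q}$ into homogeneous components produces a top-degree factor $t^{\deg\varphi^{L_Q}/4}$ together with lower order $\sqrt{t}$-corrections. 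Using $\dim\CmO^G = 2\dim\uf_{Q,1} + \deg\varphi^{L_Q}$ (which follows from Remark \ref{rem:dimensions} together with $\dim\uf_{Q,1}=\dim N_Q+\dim V_0^{L_Q}+\dim\uf_2^{L_Q}$) together with the prefactor $t^{-d/2}$ from \eqref{eq:asympt:kernel} gives precisely the leading power $t^{-d/2+d_\CmO^G/4}$ stated in the proposition.

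For the weight function, write $e^{\sqrt{t}Y'} = \Mid + \sqrt{t}\,\tilde Y(t,Y')$ where $\tilde Y(t,Y') := t^{-1/2}(e^{\sqrt{t}Y'}-\Mid) = Y' + \sqrt{t}\,Y'^2/2 + \ldots$ is polynomial in $\sqrt{t}$ and $Y'$ with $\tilde Y(0,Y') = Y'$. Corollary \ref{prop:weight:fct} applied with $s = \sqrt{t}$ yields
\[
w_{M,\CmO}^Q(e^{\sqrt{t}Y'}) = \sum_{i=0}^{r_M}\bigl(\tfrac{1}{2}\log t\bigr)^i Q_i^Q\bigl(\log|p_1^Q(\tilde Y(t,Y'))|,\ldots,\log|p_q^Q(\tilde Y(t,Y'))|\bigr),
\]
and Taylor-expanding each $Q_i^Q$ around $\tilde Y(0,Y') = Y'$ contributes further $t^{j/2}$ corrections. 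Multiplying this with the expansions of the heat kernel Gaussian, of the heat kernel coefficients $a_n^\nu(e^{\sqrt{t}Y'})$, and of $|\varphi^{L_Q}(\sqrt{t}Y')|^{1/2}$ produces a formal double series in $t^{j/2}$ and $(\log t)^i$ with $0\le i\le r_M$; integrating against the fixed Gaussian and summing over $Q\in\CmF(M)$ gives the coefficients $b_{ij}$. Uniform boundedness should follow from the universal structure of these expansions, which depend only on $\nu$, the heat kernel coefficients, and the Jacobson--Morozov data of $\CmO$.

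The hard part will be the rigorous justification of termwise integration and the control of the weight function against Gaussian measure. The polynomials $p_j^Q$ vanish on hypersurfaces of $\uf_{Q,1}$, producing logarithmic singularities in $w_{M,\CmO}^Q$ that must be absorbed into the Gaussian integration; this is standard using local integrability of $\log|p|$ for any non-trivial polynomial $p$, but it needs to be verified that the $Q_i^Q$-polynomials combined with the polynomial $A_j$ from the heat-kernel expansion remain integrable against $e^{-Q_{L_Q}(Y')}|\varphi^{L_Q}|^{1/2}$. A secondary subtlety is confirming that no powers of $\log t$ beyond degree $r_M$ appear in the combined expansion; this follows because the only source of $\log t$ is Corollary \ref{prop:weight:fct}. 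Finally, the uniform upper bound \eqref{eq:uniform:upper:heat} and Lemma \ref{lem:lower:bound:global} allow one to absorb the contribution from $\|Y'\|\gg 1$ into an $O(e^{-c/t})$ error, while the remainder $O(t^{N+1-d/2})$ in \eqref{eq:asympt:kernel} translates into an $O(t^{N+1-d/2+d_\CmO^G/4})$ remainder for $I_Q(t)$, giving a true asymptotic expansion to arbitrary order.
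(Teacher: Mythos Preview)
Your overall strategy---decompose via \eqref{eq:unipotent:combined}, separate a neighbourhood of the origin from the complement, rescale by $\sqrt{t}$, and invoke Corollary~\ref{prop:weight:fct} for the weight---is exactly the paper's approach, and your dimension count leading to the exponent $-d/2+d_\CmO^G/4$ is correct.

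There is, however, a genuine technical gap in your handling of the weight after rescaling. You write $e^{\sqrt{t}Y'}=\Mid+\sqrt{t}\,\tilde Y(t,Y')$, apply Corollary~\ref{prop:weight:fct}, and then propose to ``Taylor-expand each $Q_i^Q$ around $\tilde Y(0,Y')=Y'$'' to extract further $t^{j/2}$-corrections. But the functions $\log|p_j^Q(Z)|$ are singular along the zero sets of the $p_j^Q$, so the $t$-derivatives of $\log|p_j^Q(\tilde Y(t,Y'))|$ at $t=0$ are rational functions of $Y'$ with poles along $\{p_j^Q(Y')=0\}$, not polynomials. Such terms are in general \emph{not} integrable against the Gaussian (already $1/p$ fails to be locally $L^1$ near a smooth hypersurface $\{p=0\}$), so the termwise integration you need does not go through.

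The paper avoids this by changing variables to $N=e^Y-\Mid$ \emph{before} rescaling. One then rescales $N\mapsto\sqrt{t}\,N$, and Corollary~\ref{prop:weight:fct} yields
\[
w_{M,\CmO}^Q(\Mid+\sqrt{t}\,N)=\sum_{i}(\tfrac12\log t)^i\,Q_i^Q\bigl(\log|p_1^Q(N)|,\ldots,\log|p_q^Q(N)|\bigr),
\]
in which the logarithmic factors are \emph{independent of $t$}. All residual $t$-dependence sits in smooth factors---the heat-kernel amplitudes $a_n^\nu$, the distance function, $|\varphi^{L_Q}|^{1/2}$, and the Jacobian $dY/dN$---which can legitimately be Taylor-expanded in $\sqrt{t}$ with polynomial coefficients; the resulting integrals against products of $\log|p_j^Q(N)|$ then converge by the local integrability of logarithms. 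Your variable $\tilde Y(t,Y')$ is in fact precisely the paper's rescaled $N$, so your argument becomes correct as soon as you change the integration variable from $Y'$ to $\tilde Y$ (the Jacobian of this change is polynomial in $\sqrt{t}$ and $\tilde Y$ and causes no trouble) rather than attempting to expand the weight in $t$ while integrating in $Y'$.
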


We prove a slightly more general version of Proposition~\ref{prop:asymp:unipotent:distr} which will make for a cleaner proof of Theorem~\ref{thm:asymptotic:geom} in the next section.
Let $M\in\CmL$ and let $P_1=M_1U_1\in\CmF(M)$. 
If $f\in C^\infty(G_\infty)$ define $f_{P_1}\in C^\infty(M_1)$ by
\[
 f_{P_1}(m)
 =\delta_{P_1}(m)^{1/2} \int_{\K_\infty} \int_{U_1} f(k^{-1} muk)\, du\, dk
\]
provided the right hand side is finite for any $m\in M_1$.
If $f$ has compact support, $f_{P_1}$ is compactly supported as well. 

If $\CmO$ is a unipotent conjugacy class in $M$, we can define $J_M^{M_1}(\CmO, f_{P_1})$ as before with $G_\infty$ replaced by $M_1$.

\begin{prop}\label{prop-asympt-exp}
Let $M\in \CmL$, $\CmO\subseteq M$ a unipotent conjugacy class in $M$, and $P_1=M_1U_1$ a semistandard parabolic subgroup of $G$ with $M\subseteq M_1$. Let $d_\CmO^{G_\infty}=\dim\CmO^{G_\infty}$ be the dimension of the unipotent orbit in $G_\infty$ induced from $M$, and let $r_M^{M_1}=\dim \ka_M^{M_1}$. Then there exist constants $b_{ij}=b_{ij}(M,\CmO)\in\C$, $j\ge0$, $0\le i\le r_M^{M_1}$ such that for every $0<t\le1$ 
\begin{equation}\label{asympt-exp7}
 J_M^{M_1}(\CmO, \left(\psi_t^\nu\right)_{P_1})
 \sim t^{-d/2  + d_\CmO^G/4} \sum_{j=0}^\infty \sum_{i=0}^{r_M^{M_1}} b_{ij} t^{j/2} (\log t)^i.
\end{equation}
Moreover, the coefficients $b_{ij}$ are uniformly bounded.
\end{prop}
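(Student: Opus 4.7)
The plan is to apply the integral representation~\eqref{eq:unipotent:combined} (adapted from $G_\infty$ to $M_1$) to the test function $f = (\psi_t^\nu)_{P_1}$ and extract the asymptotic via a rescaling. Substituting expresses $J_M^{M_1}(\CmO, (\psi_t^\nu)_{P_1})$ as a finite sum over $Q \in \CmF^{M_1}(M)$ of integrals
\[
 c(Q,\CmO) \int_{\uf_{Q,1}^{M_1}} \left((\psi_t^\nu)_{P_1}\right)_{\K_\infty^{M_1}}(e^Y) \, w_{M,\CmO}^Q(e^Y) \, |\varphi^{L_Q}(Y)|^{1/2}\, dY.
\]
Because $\delta_{P_1}(e^Y)=1$ for unipotent $e^Y$, the constant term unfolds to an integral of $\psi_t^\nu$ over $U_1$; parametrising $U_1 = \exp(\nf_{P_1})$ and composing via Baker--Campbell--Hausdorff the pair $(Y, V)\in \uf_{Q,1}^{M_1}\oplus \nf_{P_1}$ into a single nilpotent element of $\uf_{\tilde Q, 1}$, where $\tilde Q\in \CmF(M)$ is the parabolic with $\tilde Q\cap M_1 = Q$ and $\tilde Q\supseteq P_1$ (so $L_{\tilde Q}=L_Q$ and $N_{\tilde Q}=N_Q^{M_1}N_{P_1}$), one recasts the integrand as an integral over $\uf_{\tilde Q, 1}$.

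The key step is the rescaling $Y = \sqrt{t}\, Y'$, $V = \sqrt{t}\, V'$. The Jacobian contributes $t^{\dim \uf_{\tilde Q, 1}/2}$; the leading term of the heat-kernel expansion~\eqref{eq:asympt:kernel} contributes $t^{-d/2}$ times a Gaussian-type factor in $Y'+V'$; and the homogeneity of $\varphi^{L_Q}$ of degree $\deg \varphi^{L_Q}=\dim \lf_1^{L_Q}$ produces $t^{\deg \varphi^{L_Q}/4}|\varphi^{L_Q}(Y')|^{1/2}$. Using the decomposition $\dim \uf_{\tilde Q, 1}=\dim N_{\tilde Q}+\dim V_0^{L_Q}+\dim\uf_2^{L_Q}$ and the dimension identity of Remark~\ref{rem:dimensions}, these powers of $t$ combine to give the leading exponent
\[
 -\frac{d}{2}+\frac{\dim \uf_{\tilde Q, 1}}{2}+\frac{\deg \varphi^{L_Q}}{4} \;=\; -\frac{d}{2}+\frac{d_{\CmO}^{G_\infty}}{4},
\]
which is the same for every $Q\in \CmF^{M_1}(M)$.

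The weight $w_{M,\CmO}^Q(e^{\sqrt{t}\,Y'})$ is handled by changing to the polynomial coordinate $Z=e^Y-\Mid$ on the nilpotent orbit (a polynomial diffeomorphism with polynomial Jacobian) and invoking Corollary~\ref{prop:weight:fct} with $s=\sqrt{t}$: this produces a polynomial in $\log t$ of degree at most $r_M^{M_1}=\dim \af_M^{M_1}$ whose coefficients are polynomials in $\log|p_l(Z')|$. The subleading coefficients $a_n^\nu\,t^n$ in~\eqref{eq:asympt:kernel}, together with the polynomial $\sqrt{t}$-corrections arising from the non-linearity of $e^Y-\Mid$ and from the BCH product $e^Ye^V=e^{\text{BCH}(Y,V)}$, contribute the additional $t^{j/2}$ powers with $j\ge 0$. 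Each resulting integrand is integrable: the Gaussian factor and the cutoff $\beta$ give rapid decay in $Y'$ and $V'$, while $\log|p_l(Z')|$ is locally integrable. The coefficients $b_{ij}$ are thus absolutely convergent integrals of fixed integrable data, and summing over $Q\in \CmF^{M_1}(M)$ yields~\eqref{asympt-exp7}.

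The principal obstacle is the bookkeeping required to track the $\sqrt{t}$-corrections from different sources (the non-linear relation between $Y$ and $Z=e^Y-\Mid$, the BCH composition of $Y$ and $V$, and the subleading terms in~\eqref{eq:asympt:kernel}) and to verify that they combine with the logarithmic expansion from Corollary~\ref{prop:weight:fct} into a bona fide polynomial-in-$\log t$ series with absolutely integrable coefficients. Once this is carried out, the leading power of $t$ falls out from the elementary dimension identity of Remark~\ref{rem:dimensions}, and the uniform boundedness of the $b_{ij}$ is automatic since each $b_{ij}$ is a fixed integrable expression not depending on $t$.
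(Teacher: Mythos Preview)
Your proposal is correct and follows essentially the same strategy as the paper: unfold the constant term to rewrite $J_M^{M_1}(\CmO,(\psi_t^\nu)_{P_1})$ as a sum of integrals over the $G_\infty$-level spaces $\uf_{\tilde Q,1}$ (this is the paper's \eqref{eq:weighted:levi}, and your BCH step is exactly the Jacobian-$1$ change of variables implicit there), rescale by $\sqrt{t}$, and combine the heat-kernel parametrix \eqref{eq:asympt:kernel} with the scaling behaviour of the weight from Corollary~\ref{prop:weight:fct}.

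Two points of execution differ. First, the paper changes to the coordinate $N=e^Y-\Mid$ \emph{before} rescaling, so that Corollary~\ref{prop:weight:fct} applies directly with $s=\sqrt{t}$ and fixed $Z$; your order (rescale $Y$ first, then pass to $Z$) forces you to track the extra $\sqrt{t}$-corrections from the nonlinearity $Y\mapsto e^Y-\Mid$ that you flag in the last paragraph. Second, and more substantively, the paper explicitly splits each integral into a small ball $B(\eps)$ and its complement $U(\eps)$ and proves via Lemma~\ref{lem:lower:bound:global} together with \cite[Lemma~7.7]{MM1} that the $U(\eps)$-contribution is $O(e^{-c/t})$; you assert rapid decay without supplying this tail estimate. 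Note that after your rescaling the cutoff $\beta$ sits at scale $t^{-1/2}$ and therefore does \emph{not} provide decay in $Y'$; it is the Gaussian from the heat kernel that must dominate the logarithmic weights $\log|p_l(Z')|$ at infinity, and this is precisely what the $B(\eps)/U(\eps)$ argument secures.
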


For the proof of this proposition we follow \cite[\S 12]{MM1} taking into account our results above.
Suppose $\tilde Q\subseteq M_1$ is a semistandard parabolic which contains $M$. Let $\tilde Q= L_{\tilde Q} N_{\tilde Q}$ with $M\subseteq L_{\tilde Q}$, and let $Q= \tilde Q U_1$. Then $Q$ is a semistandard parabolic subgroup of $G$ with Iwasawa decomposition $Q=L_{\tilde Q}  N_Q$, $ N_Q= N_{\tilde Q} U_1$, that is, $Q$ and $\tilde Q$ have the same Levi component $L_Q=L_{\tilde Q}$ containing $M$. Hence $\varphi^Q=\varphi^{\tilde Q}$ and $w^Q_{\CmO, M}= w^{\tilde Q}_{\CmO, M}$.  Applying \eqref{eq:unipotent:combined} to $J_M^{M_1}(\CmO, \left(\psi_t^\nu\right)_{P_1})$ and unfolding the definition of $\left(\psi_t^\nu\right)_{P_1}$, we obtained
\begin{equation}\label{eq:weighted:levi}                                       
J_M^{M_1}(\CmO, \left(\psi_t^\nu\right)_{P_1})
   = \sum_{Q} \tilde c(Q, \CmO) \int_{\uf_{Q,1}} \psi^\nu_t(e^Y) w_{M,\CmO}^Q(e^Y) |\varphi^{L_Q}(Y)|^{1/2} \, dY
\end{equation}
where the sum runs over all $Q\in \CmF(M)$ in the image of the map $\tilde Q\mapsto Q$, and the constants depend on the normalization of measures. We therefore need to analyze integrals that are of the form as those on the right hand side of \eqref{eq:weighted:levi}.

 Fix $Q\in\CmF(M)$. We equip $\uf_{Q,1}$ with a Euclidean norm by fixing some isomorphism $\uf_{Q,1}\simeq \R^{\dim\uf_{Q,1}}$ which respects the direct sum decomposition $\uf_{Q,1}= \uf_1^{L_Q}\oplus\nf_Q$. 
 Let $\eps>0$ and let $B(\eps)\subseteq \uf_{Q,1}$ be the ball around $0$ of radius $\eps$. 
 Put $U(\eps)=\uf_{Q,1}\smallsetminus B(\eps)$. We split the integral on the right hand side of \eqref{eq:weighted:levi} into integrals over $B(\eps)$ and $U(\eps)$. 
 
 Recall the function $\beta$ from the definition \eqref{eq:def:psi} of $\psi_t^\nu$. We assume that $\beta$ has sufficiently small support. Let $\eps>0$ be such that  $\beta(r(\exp(\cdot)))$ restricted to $B(\eps)$ is identically equal to $1$.   Fix $\psi\in C^\infty(\R)$ as in \S\ref{sec:asymp:heat}
 (for the expansion \eqref{eq:asympt:kernel}) such that the support of the restriction of $\psi(r(\exp(\cdot)))$ to $\uf_{Q,1}$ is contained in  $B(\eps)$.

 We first show that the integral over $U(\eps)$ decays exponentially in $t^{-1}$ as $t\searrow 0$ and therefore does not contribute to the asymptotic expansion. Indeed, it follows from \S\ref{subsec:weights} and \eqref{eq:uniform:upper:heat} that
 \begin{equation}\label{eq:int:bigger:eps}
\left|  \int_{U(\eps)} h_t^\nu(e^Y) |\varphi^Q(Y)|^{1/2} w_{M,\CmO}^Q(e^Y)\, dY \right|
\end{equation}
can be bounded by a constant multiple of a finite sum of integrals of the form
\begin{equation}\label{eq:integration:u(eps)}
t^{-d/2}\int_{U(\eps)} e^{-r(e^Y)^2/(4 t)}  |\varphi^Q(Y)|^{1/2} \prod_{j=1,\ldots, J} \log \|p_j(e^Y-I)\|\, dY  
 \end{equation}
where $p_j$, $j=1,\ldots, J$ are suitable polynomials on the Lie algebra as the ones appearing in Corollary \ref{prop:weight:fct}. In particular, $p_j(e^Y-I)$ does not vanish identically in $Y\in U(\eps)$.

The polynomial $\varphi^Q$ has degree $\dim \lf_1^Q$ so that $|\varphi^Q(Y)|^{1/2}\ll_\eps  \|Y\|^{\dim\lf_1^Q/2}$. 
It follows from Lemma~\ref{lem:lower:bound:global}, that there are constants $c_\eps, c_1>0$ such that \eqref{eq:integration:u(eps)} is bounded by a constant multiple of
\[
 e^{-c_{\eps}/t}\int_{\uf_{Q,1}} \|Y\|^{\frac{\dim\lf_1^Q}{2}} e^{-c_1\log^2(1+\|Y\|)/t} \prod_{j=1,\ldots,J} \log\|p_j(e^Y-I)\|\, dY
\]
provided that $\eps$ is sufficiently small. Note that $p_j(e^Y-I)$ is again a polynomial in $Y$.
By \cite[Lemma~7.7]{MM1} this integral converges. (In fact, \cite[Lemma~7.7]{MM1} only treats the case without the power of $\|Y\|$ in the integral, but it is immediate from the proof that a polynomial in $\|Y\|$ does not change the validity of the assertion.)
Therefore, \eqref{eq:integration:u(eps)} is bounded by a constant multiple of $e^{-c_\eps/t}$, that is, it decays exponentially in $t^{-1}$ as $t\searrow0$.

For the integral over $B(\eps)$ we use \eqref{eq:asympt:kernel} and the expansion of $w_{M,\CmO}^Q$ from Corollary~\ref{prop:weight:fct}. 
Using \eqref{eq:asympt:kernel} we get
\begin{multline}\label{eq:int:over:ball}
   \int_{B(\eps)} h_t^\nu(e^Y) |\varphi^Q(Y)|^{1/2} w_{M,\CmO}^Q(e^Y) \, dY\\
      = (4\pi t)^{-d/2} \sum_{n=0}^N t^n \int_{B(\eps)}\exp\left(-r(e^Y)^2/(4t)\right) \psi(r(e^Y)) a_n^\nu(e^Y) |\varphi^Q(Y)|^{1/2} w_{M,\CmO}^Q(e^Y)\, dY\\ + O\left(t^{N+1-d/2}\right).
\end{multline}
We now first proceed as in \cite[\S12.2]{MM1} and use Taylor expansion of the functions  $\exp\left(-r(e^Y)^2/(4t)\right)$ and 
$f(Y):=\psi(r(e^Y)) a_n^\nu(e^Y) |\varphi^Q(Y)|^{1/2}$ in $N=e^Y-I$ around $0$. 
Note that except for $\varphi^Q(Y)$ all involved functions depend only on $r(e^Y)$, and can therefore be continued to smooth function on all of $\Liegl_n(\R)$. Since $\varphi^Q(Y)$ is a polynomial, we can extend it to a polynomial on all of $\Liegl_n(\R)$ as well.    
Then, using \eqref{eq:distance:small:X}, we can write for any $K\ge1$ and any $0<t\le 1$,
\[
\exp\left(-r(I+ t^{1/2}N)^2/(4t)\right)
= \exp\left(-\|N\|^2/4\right)\left(\sum_{k=0}^K t^{k/2} q_k(N) + R_K(t, N)\right)
\]
where $q_k$ are suitable polynomials of degree $\le 3nk$, $q_0(N)=1$,  and $R_K(t, N)$ is a remainder term satisfying 
\[
 |R_K(t, N)| \le c_1 t^{(K-1)/2} (1+ \|N\|)^{3 n K}
\]
for every $0<t\le 1$ with $c_1>0$ some suitable constant.

Similarly, 
\[
 f(\log( I+ t^{1/2} N)) 
 = \sum_{l\le L} b_l(N) t^{l/2}  + Q_L(t, N)
\]
where $b_l$ is a polynomial in $N$ of degree $\le l$,
and $Q_L(t, N)$ is a remainder term satisfying 
\[
|Q_L(t, N)|\le  c_2 t^{(L+1)/2} (1+\|N\|)^{L+1}
\]
for all $0<t\le 1$ and $N$ with $Y= \log (I+N) \in B(\eps)$ with $c_2>0$ some absolute constant.  
Note that $b_l(N)=0$ whenever $l< \dim\lf_1^Q/2$ since $|\varphi^Q|^{1/2}$ is homogeneous in $Y$ of degree $\dim\lf_1^Q/2$.

Hence the left hand side of \eqref{eq:int:over:ball} equals after a change of variables
\[
 \sum_{k\le K}\sum_{l\le L} t^{ (k+l)/2}t^{\dim\uf_{Q,1}/2} \int_{t^{-1/2} \mathcal{B}(\eps)} \exp\left(-\|N\|^2/4\right) q_k(N) b_l(N) w_M^Q(I+ t^{1/2}N)\, dN
 + \Phi_{K, L}(t)
\]
where $\mathcal{B}(\eps)$ is the image of $B(\eps)$ in $\Liegl_n(\R)$ under $Y\mapsto N=e^Y-I$, and
with the remainder $\Phi_{K, L}(t)$ satisfying
\[
 \left|\Phi_{K, L}(t)\right|
 \le c_3 t^{(L+K+1)/2} .
\]
Note that the Jacobian of the change from $Y$ to $N$ is  a polynomial in $N$ with non-vanishing constant term that we absorb into the asymptotic expansion.
Using the asymptotic expansion for the weight $w_M^Q(I+ t^{1/2}N)$, we can find coefficients $c_{m, j,\Xi}^\nu$ such that the left hand side of \eqref{eq:int:over:ball} equals 
\[
t^{-d/2} t^{\dim\uf_{Q,1}/2}\sum_{\dim\lf_1^Q\le m\le N} \sum_{j=0}^r t^{m/2} (\log t)^j \sum_{\Xi} \int_{t^{-1/2}\mathcal{B}(\eps)} c_{m, j, \Xi}^\nu(Y) \prod_{k\in\Xi} \log\|p_k(Y)\|\, d Y
+ \tilde\Phi_N(t)
\]
with the polynomials $p_k$, $k=1,\ldots, q$,  as in Corollary \ref{prop:weight:fct}, $\Xi$ running over (multi-)subsets of $\{1,\ldots, q\}$ whose size is bounded by the polynomials $Q_j$ appearing in Corollary \ref{prop:weight:fct}, and  $\tilde\Phi_N(t)$ satisfying
\[
 \left|\tilde\Phi_N(t)\right|
 \le c_4 t^{(N-d+\dim\lf_1^Q+\dim\uf_{Q,1}+ 1)/2}.
\]
Now for $0<t\le1$ we have
\[
 \sum_{\Xi} \int_{t^{-1/2}\mathcal{B}(\eps)} c_{m, j, \Xi}^\nu(Y) \prod_{k\in\Xi} \log\|p_k(Y)\|\, d Y
 = C_{m, j} + O\left(e^{-c_4/t}\right)
\]
which follows as in \cite[\S 12]{MM1}.
Hence  \eqref{eq:int:over:ball} equals
\[
t^{(-d+\dim\lf_1^Q +\dim\uf_{Q,1})/2} \sum_{m\le N} \sum_{j=0}^r \tilde C_{m,j} t^{m/2} (\log t)^j + O\left(t^{(N-d+\dim\lf_1^Q+1)/2}\right)
\]
for $0<t\le1$ where $\tilde C_{m,j}=C_{m+\dim\lf_1^Q,j}$. Since $d_\CmO^{G_\infty}= 2(\dim\uf_{Q,1}+\dim\lf_1^Q)$, the assertion of the proposition follows.

\section{Proof of Theorem \ref{thm:asymptotic:geom}}\label{sec:global}
The proof of Theorem~\ref{thm:asymptotic:geom} is global so that we return to our global notation. In particular, $\bG$ denotes again a reductive algebraic group defined over $\Q$. We fix a minimal parabolic subgroup $P_0$ of $\bG$ (as an algebraic group over $\Q$), and write $P_0=M_0U_0$ with $U_0$ the unipotent radical of $P_0$ and $M_0$ the Levi subgroup of $P_0$. We call a parabolic subgroup of $\bG$ standard if it contains $P_0$, and semistandard if it contains $M_0$. Let $\CmL$ denote the set of all semistandard Levi subgroups of $\bG$, that is, all $M\subseteq \bG$ which are Levi components of semistandard parabolic subgroups.

\subsection{Arthur's fine geometric expansion}\label{sec:fine:exp}
\setcounter{equation}{0}
 Let $S$ be a finite set of places of $\Q$, which includes the archimedean place, such that $K_v=\cpt_v$ for $v\not\in S$. Let $G(\Q_S)^1=G(\Q_S)\cap G(\A)^1$. 

Let $M\in\CmL$. Following Arthur, we introduce an equivalence relation on the set of unipotent elements in $M(\Q)$  that depends on the set $S$: Two unipotent elements $u,v\in M(\Q)$ are equivalent if and only if $u$ and $v$ are conjugate in $M(\Q_S)$. We denote the equivalence class of $u$ by $[u]_S\subseteq M(\Q)$ and let $\CmU_S^M$ denote the set of all such equivalence classes. 

Note that two equivalent unipotent elements define the same unipotent conjugacy class in $M(\Q_S)$, so we can view $\CmU^M_S$ also as the set of unipotent conjugacy classes in $M(\Q_S)$ which have at least one $\Q$-rational representative, and we denote the corresponding conjugacy class by $[u]_S$ as well. 
This differs from our notation for unipotent conjugacy classes in $\bG(\R)^1$ from the previous sections, but we now need to keep track of the dependence on~$S$.

\begin{remark}
 \begin{enumerate}[label=(\roman{*})]
  \item If $T\subseteq S$, then we get a well-defined map $\CmU^M_S\ni[u]_S\mapsto [u]_T\in\CmU^M_T$. 

  \item If $G=\GL(n)$, the equivalence relation is independent of $S$ and is the same as conjugation in $M(\Q)$.
 \end{enumerate}
\end{remark}

For $[u]_S\in \CmU^M_S$ and $f_S\in C^\infty_c(\bG(\Q_S)^1)$, Arthur associates a weighted orbital integral $J_M^G([u]_S, f_S)$ \cite{Ar88} which is a distribution supported on  the $\bG(\Q_S)$-conjugacy class induced from $[u]_S\subseteq M(\Q_S)$. 
If $S=\{\infty\}$, these distributions were discussed in \S \ref{sec:weighted}. 
 Let $\One_{\cpt^S}\in C_c^\infty(\bG(\A^S))$ be the characteristic function of $\cpt^S$, if $f_S\in C_c^\infty(\bG(\Q_S)^1)$, then we write $f=f_S\One_{\cpt^S}\in C^\infty_c(\bG(\A)^1)$. 

\begin{prop}[\cite{Ar85}, Corollary 8.3]\label{prop:fine:expansion}
 There exist unique constants $a^M([u]_S, S)\in\C$, $[u]_S\in \CmU^M_S$, such that for all $f_S\in C_c^{\infty}(\bG(\Q_S)^1)$ we have
 \begin{equation}\label{fine-geom5}
  J_{\text{unip}}(f)
  = \sum_{M\in \CmL}\sum_{[u]_S\in\CmU^M_S} a^M([u]_S, S) J_M^{\bG}([u]_S, f_S).
 \end{equation}
\end{prop}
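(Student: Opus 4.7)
The plan is to derive this as a specialization of Arthur's general fine geometric expansion of $J_{\geo}$, which decomposes $J_{\of}(f)$ for any semisimple class $\of$ into a sum over Levi subgroups of weighted orbital integrals. Since the unipotent class corresponds to the trivial semisimple element $1$, one applies the general machinery to $\of = \of_{\text{unip}}$. The first task is therefore to recall Arthur's modified kernel construction: $J_{\unip}^T(f)$ is defined by integrating a suitable truncated kernel $k_{\unip}^T(x,f)$ over $\bG(\Q)\backslash \bG(\A)^1$, is a polynomial in $T$, and its value at the canonical point $T_0$ (determined by the maximal compact $\cpt$) gives the distribution $J_{\unip}(f)$ used in the coarse expansion.

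Next, I would establish a localization statement: for $f=f_S\One_{\cpt^S}$ with $\cpt^S$ hyperspecial outside $S$, the distribution $J_{\unip}(f)$ depends on $f$ only through $f_S$, because only unipotent rational elements contribute and such elements lie in $\cpt^S$ at places outside $S$. This reduces matters to producing an expansion in terms of distributions on $\bG(\Q_S)^1$ alone. The core step is then to prove that for each semistandard $M\in\CmL$ and each equivalence class $[u]_S\in\CmU^M_S$, the weighted orbital integrals $J_M^{\bG}([u]_S,\cdot)$ span, together with those for smaller Levis and other classes, the space containing $f_S\mapsto J_{\unip}(f_S\One_{\cpt^S})$. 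Following Arthur, this is carried out by descending induction on $M$: for $M=\bG$ the only class is the trivial one and the corresponding weighted integral is just $f_S(1)$, while for proper $M$ one uses the splitting/descent formulas for weighted orbital integrals together with Arthur's analysis of the asymptotic behavior of truncated unipotent orbital integrals on $\overline{\bG(\Q)U(\Q)\bs \bG(\A)^1}$. Matching the polynomial-in-$T$ expansion of $J_{\unip}^T$ against the polynomial expansion of $\sum_M\sum_{[u]_S} a^M([u]_S,S) J_M^{\bG}([u]_S,f_S)$ forces the existence of the constants $a^M([u]_S,S)$.

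Uniqueness of the coefficients is the cleaner part: it rests on the linear independence of the functionals $f_S\mapsto J_M^{\bG}([u]_S,f_S)$ as $(M,[u]_S)$ varies. For fixed $M$, the distribution $J_M^{\bG}([u]_S,\cdot)$ is supported on the induced conjugacy class of $[u]_S$ in $\bG(\Q_S)$, and one distinguishes different $M$ by the dimension of the support and different $[u]_S$ inside a fixed $M$ by their Zariski closures. A standard argument produces, for any given $(M_0,[u_0]_S)$, test functions $f_S$ on which $J_{M_0}^{\bG}([u_0]_S,f_S)\neq 0$ while all $J_M^{\bG}([u]_S,f_S)$ with $(M,[u]_S)\neq (M_0,[u_0]_S)$ vanish; combined with the already-proved existence this pins down $a^{M_0}([u_0]_S,S)$ uniquely.

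The main obstacle is the existence part, specifically the descent step identifying the contribution of each $M$ inside $J_{\unip}^T(f)$. This requires controlling the truncated integrals of $f$ along unipotent orbits of arbitrary Levi subgroups and showing, via a delicate analysis of Arthur's $(G,M)$-family $v_M(\pi,\cdot)$ coming from the truncation, that the resulting expansion is precisely a combination of the weighted orbital integrals $J_M^{\bG}([u]_S,f_S)$ plus a polynomial remainder in $T$ that cancels by construction. This is exactly the content of \cite{Ar85}, and in our setting we would invoke it directly rather than reproduce the entire argument; the only check needed is that Arthur's hypotheses apply to our reductive $\bG$ over $\Q$ and to the particular choice of $S$ (containing the archimedean place, with $K_v=\cpt_v$ for $v\notin S$), both of which are satisfied.
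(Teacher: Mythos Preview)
The paper does not give its own proof of this statement: it is quoted directly from Arthur with the attribution \cite[Corollary~8.3]{Ar85} in the proposition header, and no further argument is supplied. Your proposal is essentially a sketch of Arthur's original proof (truncated kernel, descent on Levi subgroups, linear independence of the $J_M^{\bG}([u]_S,\cdot)$ for uniqueness), which is reasonable as background but goes well beyond what the paper itself does---namely, cite the result and move on.
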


\subsection{The splitting formula}\label{sec:splitting}
To understand the behavior of the distributions $J_M^{\bG}([u]_S,f_S)$ for our test functions, we want to apply our asymptotic expansion for the archimedean weighted integral. To that end we need to separate $\infty$ from the other places in $S$ which we will do by using Arthur's splitting formula \cite[(18.7)]{Ar05}: 
Suppose that $S=S_1\cup S_2$ with $S_1$, $S_2$ non-empty and disjoint, and that $f_S$ is the restriction of a product $f_{S_1}f_{S_2}$ to $G(\Q_S)^1$ with $f_{S_j}\in C^\infty(G(\Q_{S_j}))$, $j=1,2$. Then
\begin{equation}\label{eq:splitting}
 J_M^{\bG}([u]_S, f_S) 
 = \sum_{L_1,\,L_2\in\CmL(M)} d_M^{\bG}(L_1,L_2) J_{M}^{L_1}([u]_{S_1}, f_{S_1,Q_1}) J_M^{L_2}([u]_{S_2}, f_{S_2,Q_2}),
\end{equation}
where the notation is as follows: The $d_M^{\bG}(L_1,L_2)\in\R$ are certain constants which depend only on $M, L_1, L_2, \bG$ but not on $S$. In fact, $d_M^{\bG}(L_1, L_2)$ is non-zero only if the natural map $\af_M^{L_1}\oplus \af_M^{L_2}\longrightarrow\af_M^{\bG}$ is an isomorphism. The $Q_j$ are arbitrary elements in $\CmP(L_j)$, and  
\begin{equation}\label{auxil-fct}
f_{S_j, Q_j}(m)= \delta_{Q_j}(m)^{1/2}\int_{\cpt_{S_j}}\int_{N_j(\Q_{S_j})} f_{S_j}(k^{-1} m n k)\, dn \, dk
\end{equation}
where $N_j$ is the unipotent radical of $Q_j$. Finally, $J_M^{L_j}([u]_{S_j}, \cdot)$ denotes the $S_j$-adic distribution supported on the $L_j(\Q_{S_j})$-conjugacy class induced from $[u]_{S_j}\subseteq M(\Q_{S_j})$ and defined as in \cite{Ar88}.

\subsection{Completion of the proof of Theorem \ref{thm:asymptotic:geom}}
Let $S$ be as in \S\ref{sec:fine:exp} and write $S=\{\infty\}\sqcup S_0$. Then $K_f= K_{S_0} \cpt^S$. Recall the definition of $\tilde\phi^\nu_t$ and $\psi^\nu_t$ from \eqref{eq:def:phi:tilde} and \eqref{eq:def:psi}, respectively, so that 
\[
\tilde\phi^{\nu}_t
= \psi_t^\nu \cdot \One_{K_{S_0}}\cdot \One_{\cpt^S} .
\]
By Proposition~\ref{prop:fine:expansion} we have
\[
 J_{\text{unip}}(\tilde\phi^\nu_t)
  = \sum_{M\in \CmL}\sum_{[u]_S\in\CmU^M_S} a^M([u]_S, S) J_M^G([u]_S, \psi_t^\nu \cdot \One_{K_{S_0}}).
\]
This is a finite sum and the number of summands is independent of $t$ because the support of $ \psi_t^\nu \cdot \One_{K_{S_0}}$ is independent of $t$. 
To prove Theorem~\ref{thm:asymptotic:geom} it therefore suffices to establish an asymptotic expansion of $J_M^{\bG}([u]_S, \psi_t^\nu \cdot \One_{K_{S_0}})$ as $t\searrow 0$.
We first apply the splitting formula \eqref{eq:splitting} to this integral with $S_1=\{\infty\}$ and $S_2=S_0$. We get
\[
 J_M^{\bG}([u]_S, \psi_t^\nu \cdot \One_{K_{S_0}})
 = \sum_{L_1,\,L_2\in\CmL(M)} d_M^{\bG}(L_1,L_2) J_{M}^{L_1}([u]_{\infty}, \psi^\nu_{t,Q_1}) J_M^{L_2}([u]_{S_0}, \One_{K_{S_0},Q_2}).
\]
Again, this is a finite sum with number of summands independent of $t$, and $d_M^{\bG}(L_1,L_2)$ and $J_M^{L_2}([u]_{S_0}, \One_{K_{S_0},Q_2})$ constant in $t$. In combination with the asymptotic expansion of $J_{M}^{L_1}([u]_{\infty}, \psi^\nu_{t,Q_1})$ as $t\searrow0$ from Proposition~\ref{prop:asymp:unipotent:distr} we obtain Theorem~\ref{thm:asymptotic:geom}

\begin{remark}
In the rank one case, the short time asymptotic expansion of the 
regularized trace of the heat operators can also be obtained by using methods of microlocal analysis
\cite[Theorem~A.1]{AR}. In fact, this method works for the more general class
of manifolds with cusps. It is a
challenging problem to see if in the higher rank case the asymptotic expansion
\eqref{asex} can also be derived by methods of microlocal analysis.
\end{remark}

\section{The spectral side of the non-invariant trace formula}\label{sec-trform}
\setcounter{equation}{0}

For the convenience of the reader we summarize in this section some basic facts about 
Arthur's non-invariant trace formula. The trace formula is the equality 
\begin{equation}\label{tracef1}
J_{\geo}(f)=J_{\spec}(f),\quad f\in C_c^\infty(G(\A)^1),
\end{equation}
of the geometric side $J_{\geo}(f)$ and the spectral side $J_{\spec}(f)$ of the
trace formula. The geome\-tric side has been described in the previous section.
In this section we recall the definition of the spectral side, and in particular
the refinement of the spectral expansion obtained in \cite{FLM1}. 

The main ingredient of the spectral side  are logarithmic derivatives of 
intertwining operators. We briefly recall the structure of the intertwining 
operators.

Let $P=MU_P\in\cP(M)$. 
Recall that we denote  by $\rts_P\subset\af_P^*$ the set of reduced roots of 
$A_M$ of the Lie algebra $\mathfrak{u}_P$ of $U_P$.
Let $\srts_P$ be the subset of simple roots of $P$, which is a basis for 
$(\af_P^G)^*$.
Write $\af_{P,+}^*$ for the closure of the Weyl chamber of $P$, i.e.
\[
\aaa_{P,+}^*=\{\lambda\in\aaa_M^*:\sprod{\lambda}{\alpha^\vee}\ge0
\text{ for all }\alpha\in\rts_P\}
=\{\lambda\in\aaa_M^*:\sprod{\lambda}{\alpha^\vee}\ge0\text{ for all }
\alpha\in\srts_P\}.
\]
Denote by $\modulus_P$ the modulus function of $P(\A)$.
Let $\bar\AF_2(P)$ be the Hilbert space completion of
\[
\{\phi\in C^\infty(M(\Q)U_P(\A)\bs G(\A)):\modulus_P^{-\frac12}\phi(\cdot x)\in
L^2_{\disc}(\Ai M(\Q)\bs M(\A)),\ \forall x\in G(\A)\}
\]
with respect to the inner product
\[
(\phi_1,\phi_2)=\int_{\Ai M(\Q)\bU_P(\A)\bs \bG(\A)}\phi_1(g)
\overline{\phi_2(g)}\ dg.
\]
Let $\alpha\in\rts_M$.
We say that two parabolic subgroups $P,Q\in\cP(M)$ are \emph{adjacent} along 
$\alpha$, and write $P|^\alpha Q$, if $\rts_P\cap-\rts_Q=\{\alpha\}$.
Alternatively, $P$ and $Q$ are adjacent if the group $\langle P,Q\rangle$
generated by $P$ and $Q$ belongs to $\cF_1(M)$ (see \eqref{f1} for its
definition).
Any $R\in\cF_1(\M)$ is of the form $\langle P,Q\rangle$, where $P,Q$ are
the elements of $\cP(M)$ contained in $R$. We have $P|^\alpha Q$ with 
$\alpha^\vee\in\rts_P^\vee \cap\af^R_M$.
Interchanging $P$ and $Q$ changes $\alpha$ to $-\alpha$.

For any $P\in\cP(M)$ let $\Ht_P\colon G(\A)\rightarrow\af_P$ be the 
extension of $\Ht_M$ to a left $U_P(\A)$-and right $\K$-invariant map.
Denote by $\cA^2(P)$ the dense subspace of $\bar\cA^2(P)$ consisting of its 
$\K$- and $\zzz$-finite vectors,
where $\zzz$ is the center of the universal enveloping algebra of 
$\mathfrak{g} \otimes \C$.
That is, $\cA^2(P)$ is the space of automorphic forms $\phi$ on 
$U_P(\A)M(\Q)\bs G(\A)$ such that
$\modulus_P^{-\frac12}\phi(\cdot k)$ is a square-integrable automorphic form on
$\Ai M(\Q)\bs M(\A)$ for all $k\in\K$.
Let $\rho(P,\lambda)$, $\lambda\in\af_{M,\C}^*$, be the induced
representation of $G(\A)$ on $\bar\cA^2(P)$ given by
\[
(\rho(P,\lambda,y)\phi)(x)=\phi(xy)e^{\sprod{\lambda}{\Ht_P(xy)-\Ht_P(x)}}.
\]
It is isomorphic to the induced representation 
\[
\Ind_{P(\A)}^{G(\A)}\left(L^2_{\disc}(\Ai M(\Q)\bs M(\A))
\otimes e^{\sprod{\lambda}{\Ht_M(\cdot)}}\right).
\]
For $P,Q\in\cP(M)$ let
\begin{equation}\label{intertw0}
M_{Q|P}(\lambda):\cA^2(P)\to\cA^2(Q),\quad\lambda\in\af_{M,\C}^*,
\end{equation}
be the standard \emph{intertwining operator} \cite[\S 1]{Ar9}, which is the 
meromorphic continuation in $\lambda$ of the integral
\[
[M_{Q|P}(\lambda)\phi](x)=\int_{U_Q(\A)\cap U_P(\A)\bs U_Q(\A)}\phi(nx)
e^{\sprod{\lambda}{\Ht_P(nx)-\Ht_Q(x)}}\ dn, \quad \phi\in\cA^2(P), \ x\in G(\A).
\]
Given $\pi\in\Pi_{\di}(M(\A))$, let $\cA^2_\pi(P)$ be the space of all 
$\phi\in\cA^2(P)$ for which the function 
$M(\A)\ni x\mapsto \delta_P^{-\frac{1}{2}}\phi(xg)$,
$g\in G(\A)$, belongs to the $\pi$-isotypic subspace of the space
$L^2(\Ai M(\Q)\bs M(\A))$.
For any $P\in\cP(\M)$ we have a canonical isomorphism of 
$G(\A_f)\times(\LieG_{\C},\K_\infty)$-modules
\[
j_P:\Hom(\pi,L^2(\Ai M(\Q)\bs M(\A)))\otimes 
\Ind_{P(\A)}^{G(\A)}(\pi)\rightarrow\cA^2_\pi(P).
\]
If we fix a unitary structure on $\pi$ and endow 
$\Hom(\pi,L^2(\Ai M(\Q)\bs M(\A)))$ with the inner product 
$(A,B)=B^\ast A$
(which is a scalar operator on the space of $\pi$), the isomorphism $j_P$ 
becomes an isometry. 

Suppose that $P|^\alpha Q$.
The operator $M_{Q|P}(\pi,s):=M_{Q|P}(s\varpi)|_{\cA^2_\pi(P)}$, where $\varpi\in
\af^\ast_M$ is such that $\langle\varpi,\alpha^\vee\rangle=1$, admits a 
normalization by a global factor
$n_\alpha(\pi,s)$ which is a meromorphic function in $s$. We may write
\begin{equation} \label{normalization}
M_{Q|P}(\pi,s)\circ j_P=n_\alpha(\pi,s)\cdot j_Q\circ(\Id\otimes R_{Q|P}(\pi,s))
\end{equation}
where $R_{Q|P}(\pi,s)=\otimes_v R_{Q|P}(\pi_v,s)$ is the product
of the locally defined normalized intertwining operators and 
$\pi=\otimes_v\pi_v$
\cite[\S 6]{Ar9}, (cf.~\cite[(2.17)]{Mu2}). In many cases, the 
normalizing factors can be expressed in terms automorphic $L$-functions 
\cite{Sha1}, \cite{Sha2}. 

We now turn to the spectral side. Let $L \supset M$ be Levi subgroups in 
$\levis$, $P \in \PPP (M)$, and
let $m=\dim\aaa_L^G$ be the co-rank of $L$ in $G$.
Denote by $\bases_{P,L}$ the set of $m$-tuples $\bss=(\beta_1^\vee,\dots,
\beta_m^\vee)$
of elements of $\rts_P^\vee$ whose projections to $\af_L$ form a basis for 
$\af_L^G$.
For any $\bss=(\beta_1^\vee,\dots,\beta_m^\vee)\in\bases_{P,L}$ let
$\vol(\bss)$ be the co-volume in $\af_L^G$ of the lattice spanned by $\bss$ 
and let
\begin{align*}
\Xi_L(\bss)&=\{(Q_1,\dots,Q_m)\in\cF_1(M)^m: \ \ \beta_i^{\vee}\in\af_M^{Q_i}, 
\, i = 1, \dots, m\}\\&=
\{\langle P_1,P_1'\rangle,\dots,\langle P_m,P_m'\rangle): \ \ 
P_i|^{\beta_i}P_i', \, 
i = 1, \dots, m\}.
\end{align*}

For any smooth function $f$ on $\af_M^*$ and $\mu\in\af_M^*$ denote by 
$D_\mu f$ the directional derivative of $f$ along $\mu\in\af_M^*$.
For a pair $P_1|^\alpha P_2$ of adjacent parabolic subgroups in $\cP(M)$ write
\begin{equation}\label{intertw2}
\delta_{P_1|P_2}(\lambda)=M_{P_2|P_1}(\lambda)D_\varpi M_{P_1|P_2}(\lambda):
\cA^2(P_2)\rightarrow\cA^2(P_2),
\end{equation}
where $\varpi\in\af_M^*$ is such that $\sprod{\varpi}{\alpha^\vee}=1$.
\footnote{Note that this definition differs slightly from the definition of
$\delta_{P_1|P_2}$ in \cite{FLM1}.} Equivalently, writing
$M_{P_1|P_2}(\lambda)=\Phi(\sprod{\lambda}{\alpha^\vee})$ for a
meromorphic function $\Phi$ of a single complex variable, we have
\[
\delta_{P_1|P_2}(\lambda)=\Phi(\sprod{\lambda}{\alpha^\vee})^{-1}
\Phi'(\sprod{\lambda}{\alpha^\vee}).
\]
For any $m$-tuple $\dtup=(Q_1,\dots,Q_m)\in\Xi_L(\bss)$
with $Q_i=\langle P_i,P_i'\rangle$, $P_i|^{\beta_i}P_i'$, denote by 
$\Delta_{\dtup}(P,\lambda)$
the expression
\begin{equation}\label{intertw3}
\frac{\vol(\bss)}{m!}M_{P_1'|P}(\lambda)^{-1}\delta_{P_1|P_1'}(\lambda)M_{P_1'|P_2'}(\lambda) \cdots
\delta_{P_{m-1}|P_{m-1}'}(\lambda)M_{P_{m-1}'|P_m'}(\lambda)\delta_{P_m|P_m'}(\lambda)M_{P_m'|P}(\lambda).
\end{equation}
Recall the (purely combinatorial) map $\dtup_L: \bases_{P,L} \to \FFF_1 (M)^m$ with the property that
$\dtup_L(\bss) \in \Xi_L (\bss)$ for all $\bss \in \bases_{P,L}$ as defined in \cite[pp.\ 179--180]{FLM1}.\footnote{The map $\dtup_L$ depends in fact on the additional choice of
a vector $\underline{\mu} \in (\aaa^*_M)^m$ which does not lie in an explicit finite
set of hyperplanes. For our purposes, the precise definition of $\dtup_L$ is immaterial.}

For any $s\in W(M)$ let $L_s$ be the smallest Levi subgroup in $\levis(M)$
containing $w_s$. We recall that $\aaa_{L_s}=\{H\in\aaa_M\mid sH=H\}$.
Set
\[
\iota_s=\abs{\det(s-1)_{\aaa^{L_s}_M}}^{-1}.
\]
For $P\in\FFF(M_0)$ and $s\in W(M_P)$ let
$M(P,s):\AF^2(P)\to\AF^2(P)$ be as in \cite[p.~1309]{Ar3}.
$M(P,s)$ is a unitary operator which commutes with the operators $\rho(P,\lambda,h)$ for $\lambda\in\iii\aaa_{L_s}^*$.
Finally, we can state the refined spectral expansion.

\begin{theo}[\cite{FLM1}] \label{thm-specexpand}
For any $h\in C_c^\infty(G(\A)^1)$ the spectral side of Arthur's trace formula is given by
\begin{equation}\label{specside1}
J_{\spec}(h) = \sum_{[M]} J_{\spec,M} (h),
\end{equation}
$M$ ranging over the conjugacy classes of Levi subgroups of $G$ (represented by members of $\mathcal{L}$),
where
\begin{equation}\label{specside2}
J_{\spec,M} (h) =
\frac1{\card{W(M)}}\sum_{s\in W(M)}\iota_s
\sum_{\bss\in\bases_{P,L_s}}\int_{\iii(\aaa^G_{L_s})^*}
\Tr(\Delta_{\dtup_{L_s}(\bss)}(P,\lambda)M(P,s)\rho(P,\lambda,h))\ d\lambda
\end{equation}
with $P \in \PPP(M)$ arbitrary.
The operators are of trace class and the integrals are absolutely convergent
with respect to the trace norm and define distributions on $\Co(G(\A)^1)$.
\end{theo}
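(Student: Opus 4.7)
\medskip

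\textbf{Proof proposal.} The theorem is proved in \cite{FLM1}; here I sketch the approach one would take. The starting point is Arthur's original spectral expansion (see \cite{Ar1,Ar3}), which decomposes $J_{\spec}(h)$ into a sum indexed by pairs $(M,s)$ with $M\in\cL$ and $s\in W(M)$, each term being an integral over $\iii(\aaa_{L_s}^G)^*$ of a trace involving the induced representation $\rho(P,\lambda,h)$, the unitary operator $M(P,s)$, and a rather complicated alternating sum of products of intertwining operators $M_{Q|P}(\lambda)$ associated to the combinatorics of the $(G,M)$-family arising from truncation. The first step of the plan is to replay this derivation starting from $J_{\geo}=J_{\spec}$ combined with the truncation of the kernel, arriving at Arthur's formula in its original form.

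The second step, which is the heart of the refinement, is a purely combinatorial rearrangement. One rewrites the $(G,M)$-family obtained from the operators $M_{Q|P}(\lambda)$ in terms of products of one-variable logarithmic derivatives $\delta_{P_i|P_i'}(\lambda)$, thereby expressing the integrand on the space $\iii(\aaa_{L_s}^G)^*$ as a sum indexed by basis-like tuples $\bss=(\beta_1^\vee,\dots,\beta_m^\vee)\in\bases_{P,L_s}$, weighted by $\vol(\bss)/m!$. This is achieved by applying the Arthur–Langlands combinatorial lemma that interprets the alternating sum over compatible chambers as an iterated directional derivative along a generic direction $\underline{\mu}\in(\aaa_M^*)^m$ outside an explicit finite union of hyperplanes; the choice of $\underline{\mu}$ selects the map $\dtup_L\colon\bases_{P,L}\to\FFF_1(M)^m$ and yields precisely the expression $\Delta_{\dtup_{L_s}(\bss)}(P,\lambda)$ in \eqref{intertw3}. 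The resulting identity holds as an equality of meromorphic functions of $\lambda$.

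The third and main step is to prove absolute convergence of the refined expansion, which is what makes each $J_{\spec,M}$ an honest distribution on $\Co(G(\A)^1)$ rather than merely an iterated limit. This requires:
\begin{itemize}
\item[(i)] A trace-class estimate for $\rho(P,\lambda,h)$ on isotypic subspaces of $\cA^2(P)$, polynomial in $\norm{\lambda}$ and in suitable invariants of the archimedean $K_\infty$-type and of $\lambda_\pi$;
\item[(ii)] Uniform bounds on the operator norms of $M_{Q|P}(\lambda)$ on $\iii\aaa_M^*$ away from singular hyperplanes, together with bounds on the number of poles of the normalizing factors $n_\alpha(\pi,s)$ of \eqref{normalization} in strips, forcing each $\delta_{P_i|P_i'}(\lambda)$ to satisfy an integrable $L^1$-type majorization along the contour $\iii(\aaa_{L_s}^G)^*$.
\end{itemize}
Summing (i) and (ii) against the Paley–Wiener decay of $\hat h$ in the archimedean parameter and against the rapid decrease in $\lambda_\pi$ produces absolute convergence of each integral in \eqref{specside2}, and further of the sum over $\bss$, $s$, and $[M]$.

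The main obstacle is clearly step (iii): controlling the winding/poles of the normalizing factors and extracting from the local normalized intertwining operators $R_{Q|P}(\pi_v,s)$ polynomial bounds that survive integration against the trace. In practice this is where the deep global input enters, via the estimates on normalizing factors recalled in the introduction (cf.\ \cite[Theorem~5.3]{Mu2}) and the Bernstein-type inequalities needed to handle the logarithmic derivatives uniformly in the representation parameters. Once absolute convergence is established, the identity of distributions \eqref{specside1}--\eqref{specside2} follows from the meromorphic rearrangement in step two together with Fubini, and the trace-class assertion is immediate from (i)–(ii).
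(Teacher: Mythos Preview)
The paper does not give its own proof of this theorem: it is stated with attribution to \cite{FLM1} and used as a black box in the subsequent analysis. Your sketch is a reasonable outline of the strategy in \cite{FLM1}, correctly identifying the three layers (Arthur's original expansion, the combinatorial rewriting via $(G,M)$-families into the $\bss$-indexed form, and the absolute-convergence estimates).

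One correction is worth making. In your final paragraph you point to \cite[Theorem~5.3]{Mu2} and Bernstein-type inequalities as the ``deep global input'' for absolute convergence. Those are not the tools used in \cite{FLM1}; they are the tools the present paper invokes later, in Section~\ref{sec-locint} and Section~\ref{sec-analtor}, to control $J_{\spec,M}(\phi_t^{\tau,p})$ as $t\to\infty$ for the specific heat-kernel test functions. The absolute convergence in \cite{FLM1} is proved by a different route: a direct analysis of the operators $\Delta_{\dtup}(P,\lambda)$ combined with M\"uller's trace-class theorem \cite{Mu3} and estimates on normalizing factors that do not require the later refinements. Also, your reference to ``step (iii)'' is a slip, since you only listed items (i) and (ii); presumably you meant the third step of the overall plan.
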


\section{Logarithmic derivatives of local intertwining operators}
\label{sec-locint}
\setcounter{equation}{0}

In this section we prove some auxiliary results about local intertwining 
operators. To begin with we recall some facts concerning local intertwining
operators and normalizing factors. Let $M\in\cL$ and $P,Q\in\cP(M)$. Let
$v$ be a finite place of $\Q$ and $K_v$ an open compact subgroup of $G(\Q_v)$. 
Let $\pi_v\in\Pi(M(\Q_v))$. Given $\lambda\in
\af^\ast_{M,\C}$, let $(I_P^G(\pi_v,\lambda),\H_P(\pi_v))$ denote the induced
representation. Let $\H_P^0(\pi_v)\subset\H_P(\pi_v)$ be the subspace of 
$K_v$-finite functions. Let 
\[
J_{Q|P}(\pi_v,\lambda)\colon \H_P^0(\pi_v)\to\H_Q^0(\pi_v)
\]
be the local intertwining operator between the induced representations 
$I_P^G(\pi_v,\lambda)$ and $I_Q^G(\pi_v,\lambda)$ \cite{Sha1}. It is proved 
in \cite{Ar4}, \cite[Lecture 15]{CLL} that there exist scalar valued 
meromorphic functions $r_{Q|P}(\pi_v,\lambda)$ of $\lambda\in\af_{P,\C}^\ast$ 
such that the normalized intertwining operators
\begin{equation}\label{norm-inter}
R_{Q|P}(\pi_v,\lambda)=r_{Q|P}(\pi_v,\lambda)^{-1}J_{Q|P}(\pi_v,\lambda)
\end{equation}
satisfy the conditions $(R_1)-(R_8)$ of Theorem~2.1 of
\cite{Ar4}. We recall some facts about the local normalizing factors. First
assume that $v$ is a finite valuation of $\Q$ with corresponding prime number $q_v\in\N$. Furthermore assume that 
$\dim(\af_M/\af_G)=1$ and $\pi_v$ is square integrable. Let $P\in\cP(M)$ and
let $\alpha$ be the unique simple root of $(P,A_M)$. Then Langlands
\cite[Lecture 15]{CLL} has shown that there exists a rational function 
$V_P(\pi_v,z)$ of one variable such that 
\begin{equation}\label{rat-funct}
r_{\ov P|P}(\pi_v,\lambda)=V_P(\pi_v,q_v^{-\lambda(\widetilde\alpha)}),
\end{equation}
where $\widetilde\alpha\in\af_M$ is uniquely determined by $\alpha$. 
For the construction of $V_P$ see also \cite[Sect. 3]{Mu2}. We need the 
following lemma.
\begin{lem}\label{lem-loc-int}
Let $M\in\cL$ be such that $\dim(\af_M/\af_G)=1$.
There exists $C>0$ such that for all $P\in\cP(M)$ and all 
$\pi\in\Pi(M(\Q_v))$ the number of zeros of the rational function 
$V_P(\pi,z)$ is less than or equal to $C$.
\end{lem}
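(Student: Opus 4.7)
The plan is to combine the Langlands--Shahidi description of the normalizing factors with the standard degree bound for local $L$-factors of tempered representations. The key observation is that $V_P(\pi_v,z)$ splits as a product over finitely many irreducible pieces of the adjoint action of the dual Levi, each of which contributes a rational function of controlled degree in $z$.

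First, I would invoke the explicit form of the normalizing factor for square integrable $\pi_v$ from \cite[Lecture 15]{CLL} and \cite[Sect.~3]{Mu2}: if $\widehat M$ denotes the Langlands dual of $M$ and $r_{P,1},\ldots,r_{P,m_P}$ are the irreducible constituents of the adjoint action of $\widehat M$ on the Lie algebra of the unipotent radical of the dual parabolic $\widehat P$, then, writing $s=\sprod{\lambda}{\widetilde\alpha}$,
\[
r_{\ov P|P}(\pi_v,\lambda)=\prod_{i=1}^{m_P}\frac{L(is,\pi_v,r_{P,i})}{\epsilon(is,\pi_v,r_{P,i},\psi_v)\,L(1+is,\pi_v,r_{P,i})}.
\]
Under the substitution $z=q_v^{-s}$ the right-hand side is a rational function of $z$ which, by \eqref{rat-funct}, is $V_P(\pi_v,z)$. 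The number $m_P$ and the dimensions $\dim r_{P,i}$ depend only on the inclusion $M\subset\bG$, and in particular $\sum_i\dim r_{P,i}=\dim\widehat{\mathfrak u}_P$ is bounded in terms of the rank of $\bG$ alone.

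Next, I would bound the number of zeros contributed by each factor. Since $\pi_v$ is tempered, the inverse $L$-factor $L(s,\pi_v,r_{P,i})^{-1}$ is a polynomial in $q_v^{-s}$ of degree at most $\dim r_{P,i}$ (this follows from the tempered Langlands classification, which realizes each local $L$-factor as a product of at most $\dim r_{P,i}$ Euler factors with Satake parameters on the unit circle). Hence the ratio $L(is,\pi_v,r_{P,i})/L(1+is,\pi_v,r_{P,i})$ contributes at most $2\dim r_{P,i}$ zeros in $\C\setminus\{0\}$ to $V_P(\pi_v,z)$. The $\epsilon$-factor $\epsilon(is,\pi_v,r_{P,i},\psi_v)=c\cdot q_v^{-f_i(\pi_v)\,is}$ is a monomial in $z$ and contributes no zero in $\C\setminus\{0\}$. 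Summing over $i$ yields the uniform bound $C=2\dim\widehat{\mathfrak u}_P$, which depends only on $M\subset \bG$ and not on $\pi_v$, $P\in\cP(M)$, or the place $v$.

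The main subtlety, which will not obstruct the proof but must be addressed, is that the conductor exponent $f_i(\pi_v)$ in the $\epsilon$-factor is unbounded as $\pi_v$ varies, so the total projective degree of $V_P(\pi_v,z)$ (counting zeros at $z=0$ and $z=\infty$) is not uniformly bounded. The lemma must therefore be interpreted as a bound on the zeros of $V_P(\pi_v,z)$ in $\C^\ast$; this is precisely what is needed when $V_P$ is evaluated on the unit circle $|z|=1$ in the subsequent application of the Bernstein--Borwein--Erdélyi inequality \cite[Corollary~5.18]{FiLaMu} to bound integrals of logarithmic derivatives of $R_{\ov P|P}(\pi_v,s)$.
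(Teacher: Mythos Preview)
Your approach is natural but has a substantive gap, and it differs from the paper's argument in a way worth noting.

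The gap is that the Langlands--Shahidi product formula you invoke is not available in the generality of the lemma. The rational function $V_P(\pi_v,z)$ in \cite[Lecture~15]{CLL} and \cite[Sect.~3]{Mu2} is constructed from Harish-Chandra's Plancherel density (the $\mu$-function), not from $L$-functions; the identification of $r_{\ov P|P}(\pi_v,\lambda)$ with $\prod_i L(is,\pi_v,r_{P,i})/\bigl(\epsilon(is,\pi_v,r_{P,i},\psi_v)L(1+is,\pi_v,r_{P,i})\bigr)$ is a conjecture of Langlands, and Shahidi's theory establishes it only for \emph{generic} representations of quasi-split groups. A square-integrable $\pi_v$ need not be generic. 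Even granting the formula, your degree bound $\deg L(s,\pi_v,r_{P,i})^{-1}\le\dim r_{P,i}$ is justified by writing the $L$-factor as an Artin factor of the Langlands parameter, which presupposes the local Langlands correspondence for $M(\Q_v)$---again not known for arbitrary reductive $M$. Finally, you treat only the tempered case; the lemma is stated for all $\pi_v\in\Pi(M(\Q_v))$, and you omit the reduction from general unitary $\pi_v$.

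The paper avoids every one of these issues. For square-integrable $\pi_v$ it quotes \cite[Lemma~3.1]{Mu2}, which bounds the number of \emph{poles} of $V_P(\pi_v,z)$ using only Harish-Chandra's Plancherel theory, and then invokes the functional equation \cite[(3.6)]{Mu2} (coming from $r_{P|\ov P}r_{\ov P|P}=\mu^{-1}$ up to a positive constant), which forces the number of zeros to equal the number of poles. The passage to tempered $\pi_v$ uses the multiplicativity \cite[(2.2)]{Ar4} of normalizing factors under parabolic induction, and the passage to general unitary $\pi_v$ uses compatibility with Langlands quotients \cite[(2.3)]{Ar4}. This route needs no input from the Langlands program beyond classical harmonic analysis and therefore works for every reductive $G$. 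Your argument, by contrast, would become a clean alternative precisely in those cases (inner forms of $\GL(n)$, quasi-split classical groups) where local Langlands and the $L$-function normalization are known---which, not coincidentally, is the class of ``admissible'' groups singled out later in the paper.
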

\begin{proof}
First assume that $\pi$ is square integrable. Then the
corresponding statement for the number of poles was proved in 
\cite[Lemma~3.1]{Mu2}. However, by \cite[(3.6)]{Mu2} the number of zeros of 
$V_P(\pi,z)$ agrees with the number of poles $V_P(\pi,z)$. Now let $\pi$ be 
tempered. It is known that $\pi$ is an irreducible constituent of an induced 
representation $I_R^M(\sigma)$ where $M_R$ is an admissible Levi subgroup of 
$M$ and $\sigma\in\Pi(M_R(\Q_v))$ is square integrable modulo $A_R$. Then by
\cite[(2.2)]{Ar4} we are reduced to the square integrable case. In general 
$\pi$ is a Langlands quotient of of an induced representation 
$I_R^M(\sigma,\mu)$, where $M_R$ is an admissible Levi subgroup of $M$, 
$\sigma\in\Pi_{\temp}(M_R(\Q_v))$, and $\mu$ is a point in the chamber
of $\af_R^\ast/\af_M^\ast$ attached to $R$. Now we use \cite[(2.3)]{Ar4}
to reduce the proof to the tempered case.
\end{proof}

The main goal of this section is to estimate the logarithmic derivatives
of the normalized intertwining operators $R_{Q|P}(\pi,\lambda)$. For $\bG=
\GL(n)$ such estimates were derived in \cite[Proposition~0.2]{MS}. The proof
depends on a weak version of the 
Ramanujan conjecture, which is not available in general. Therefore we will
establish only an integrated version of it, which however, is sufficient for
our purpose. For $\pi\in\Pi_{\di}(M(\A))$ denote by $\H_P(\pi)$ the Hilbert
space of the induced representation $I_P^G(\pi,\lambda)$. Furthermore, for 
an open compact subgroup $K_f\subset G(\A_f)$ and $\nu\in\Pi(\K_\infty)$, denote
by $\H_P(\pi)^{K_f}$ the subspace of vectors, which are invariant under $K_f$
and let $\H_P(\pi)^{K_f,\nu}$ denote the $\nu$-isotypical subspace of 
$\H_P(\pi)^{K_f}$. Let $P,Q\in\cP(M)$ be adjacent parabolic subgroups. Then 
$R_{Q|P}(\pi,\lambda)$ depends on a single variable $s\in\C$ and we will
write 
\[
R^\prime_{Q|P}(\pi,s_0):=\frac{d}{ds}R_{Q|P}(\pi,s)\big|_{s=s_0}
\]
for any regular $s_0\in\C$.
\begin{prop}\label{log-der}
Let $M\in\cL$, and let $P,Q\in\cP(M)$ be adjacent parabolic subgroups. Let
$K_f\subset \bG(\A_f)$ be an open compact subgroup and let 
$\nu\in\Pi(\K_\infty)$.
Then there exists $C>0$ such that
\begin{equation}\label{int-log-der}
\int_{\R}\Big\|R_{Q|P}(\pi,it)^{-1}R^\prime_{Q|P}(\pi,it)\big|_{\H_P(\pi)^{K_f,\nu}}
\Big\|(1+|t|)^{-1}dt\le C
\end{equation}
for all $\pi\in\Pi_{\di}(M(\A))$ with $\H_P(\pi)^{K_f,\nu}\neq 0$. 
\end{prop}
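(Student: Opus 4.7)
The plan is to factor the global normalized intertwining operator as a tensor product of local ones, reduce the analysis to finitely many places, and then apply the Borwein--Erd\'elyi generalization of Bernstein's inequality mentioned after \eqref{log-deriv} to each local factor.

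First I would invoke property $(R_5)$ of \cite{Ar4}, which gives the factorization $R_{Q|P}(\pi,s)=\otimes_v R_{Q|P}(\pi_v,s)$ for $\pi=\otimes_v\pi_v\in\Pi_{\di}(M(\A))$. Taking logarithmic derivatives, the operator $R_{Q|P}(\pi,s)^{-1}R'_{Q|P}(\pi,s)$ restricted to $\H_P(\pi)^{K_f,\nu}$ decomposes as a sum over places $v$ of operators involving only $\pi_v$. Because $K_f=\prod_v K_v$ with $K_v=\cpt_v$ hyperspecial for almost all $v$, and because the normalization $R_{Q|P}(\pi_v,s)$ was chosen (via the Gindikin--Karpelevich formula, i.e.\ property $(R_6)$) to fix the spherical line at unramified places, only the archimedean place and a fixed finite set $S_0$ of finite places (depending only on $K_f$) contribute nontrivially. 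Thus it suffices to establish the integral estimate at each place $v\in S_0\cup\{\infty\}$ separately and uniformly in $\pi_v$.

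Next I would bound each local contribution by showing that matrix coefficients of $R_{Q|P}(\pi_v,s)$ on the finite-dimensional subspace $\H_P(\pi_v)^{K_v,\nu}$ (for $v=\infty$ we include the $\nu$-isotypic condition; at finite places $\nu$ drops out) are rational functions whose ``order at infinity'', in the sense of degree in $s$ or in $q_v^{-s}$, is bounded uniformly in $\pi_v$. Using properties $(R_2)$ and $(R_3)$ of \cite{Ar4}, the normalized intertwining operator attached to an induced representation $I_R^M(\sigma,\mu)$ is expressed in terms of the normalized operator attached to $\sigma$, and the Langlands classification lets me reduce first from general $\pi_v$ to tempered $\pi_v$, and then from tempered to square--integrable $\sigma$, exactly as in the proof of Lemma~\ref{lem-loc-int} above. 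For finite $v$, the square--integrable case is controlled by the rational function $V_P(\sigma,z)$ in $z=q_v^{-s}$ from \eqref{rat-funct} together with the degree bound for matrix entries in the rank--one case; the number of $K_v$-fixed vectors is controlled by $K_v$ alone. For $v=\infty$, the square--integrable case is handled by the Harish-Chandra $c$-function/Knapp--Stein formalism, in which matrix coefficients on the fixed $\K_\infty$-type $\nu$ admit an explicit expression as quotients of products of Gamma functions whose number of factors is controlled by $\nu$ alone, yielding the required polynomial order at infinity.

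Finally, with the uniform order--at--infinity bound established, I would apply \cite[Corollary~5.18]{FiLaMu} to each local logarithmic derivative: the Borwein--Erd\'elyi inequality converts a bound on the degree of a rational function $f(s)$ (with no poles on $i\R$, as holds by unitarity of $R_{Q|P}(\pi_v,it)$) into a bound of the form $\int_\R|f'/f|(it)(1+|t|)^{-1}\,dt\le C\cdot\deg f$. Summing these local bounds over the finite set $\{\infty\}\cup S_0$ and combining with the operator--norm bound coming from unitarity of $R_{Q|P}(\pi_v,it)$ on the finite-dimensional subspace $\H_P(\pi_v)^{K_v,\nu}$ gives \eqref{int-log-der}.

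The main obstacle is the uniformity of the order--at--infinity bound at the archimedean place as $\pi_\infty$ ranges over all representations with $\H_P(\pi_\infty)^{\nu}\ne 0$: once $\pi_\infty$ is reduced to a square--integrable inducing datum on a Levi of $M(\R)$, one needs that the number of poles and zeros of the relevant $c$-function/Plancherel factor restricted to the $\nu$-isotypic subspace is bounded by a constant depending only on $\nu$ and the root system, independently of the continuous parameters of the square--integrable representation. This is the analogue for general $G$ of Lemma~\ref{lem-loc-int} at the archimedean place, and is where the ``standard properties'' of the $R$-operators and the structure of Knapp--Stein intertwining integrals must be used most carefully.
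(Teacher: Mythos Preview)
Your approach is essentially the same as the paper's: localize, reduce to finitely many places via $(R_8)$, reduce to square--integrable inducing data via the Langlands classification and $(R_2)$--$(R_3)$, and then invoke the Borwein--Erd\'elyi machinery of \cite{FiLaMu}. Two points in your sketch are too vague and hide the actual work, so let me flag them.

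First, you say you will apply \cite[Corollary~5.18]{FiLaMu} uniformly to ``each local logarithmic derivative''. That corollary is about rational functions on the unit circle and is indeed what the paper uses at a finite place $v$, where one sets $A_v(q_v^{-s})=R_{\oP|P}(\pi_v,s)|_{\H_P(\pi_v)^{K_v}}$ and then integrates the resulting pointwise bound against $(1+|t|)^{-1}$ using a Poisson--kernel estimate (the paper's Lemma~\ref{lem:poisson-kern}). At $v=\infty$ there is no periodicity in $s$, and the paper instead invokes \cite[Lemma~5.19]{FiLaMu}, together with the explicit description of the pole set of $R_{\oP|P}(\pi_\infty,s)|_{\H_P(\pi_\infty)^\nu}$ from \cite[Appendix, Prop.~A.2]{MS}; this gives a bound $\ll 1+\|\nu\|$, which is fine since $\nu$ is fixed. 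Your one--line summary ``$\int_\R|f'/f|(it)(1+|t|)^{-1}dt\le C\cdot\deg f$'' blurs these two distinct arguments.

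Second, and more substantively, at a finite place you write that the square--integrable case is ``controlled by the rational function $V_P(\sigma,z)$ together with the degree bound for matrix entries in the rank--one case''. The function $V_P$ governs only the normalizing factor; the bound on the number of poles of $J_{\oP|P}(\pi_v,s)$ itself comes from \cite[Thm.~2.2.2]{Sha1}. For the order at infinity of the matrix coefficients of $R_{\oP|P}(\pi_v,s)^{K_v}$, the paper does \emph{not} argue by any general degree formula: after reducing to square--integrable $\delta_v$ with $[\delta_v|_{K_v\cap M(\Q_v)}:\one]\ge 1$, it uses Harish--Chandra's result \cite[Thm.~10]{HC2} that the set of such $\delta_v$ is compact in $\Pi_2(M(\Q_v))$, hence decomposes into finitely many $i\af_M$--orbits, and the order at infinity is then bounded by inspecting finitely many representations. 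You should make this compactness step explicit; without it the uniformity in $\pi_v$ at finite places is not established.
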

\begin{proof}
We may assume that $K_f$ is factorisable, i.e., $K_f=\prod_v K_v$. Let $S$ be
the finite set of finite places such that $K_v$ is not hyperspecial. Since
$P$ and $Q$ are adjacent, by standard properties of normalized intertwining
operators \cite[Theorem~2.1]{Ar4} we may assume that $P$ is a maximal
parabolic subgroup and $Q=\overline P$, the opposite parabolic subgroup to
$P$. By
\cite[Theorem 2.1, (R8)]{Ar4}, $R_{Q|P}(\pi_v,s)^{K_v}$ is independent of $s$ if
$v$ is finite and $v\notin S$. Thus we have

\begin{equation}\label{log-der-inter1}
\begin{split}
R_{\oP|P}(\pi,s)^{-1}R^\prime_{\oP|P}(\pi,s)\big|_{H_P(\pi)^{K_f,\nu}}&=
R_{\oP|P}(\pi_\infty,s)^{-1}R^\prime_{\oP|P}(\pi_\infty,s)\big|_{\H_P(\pi_\infty)^\nu}\\
&+\sum_{v\in S}R_{\oP|P}(\pi_v,s)^{-1}R^\prime_{\oP|P}(\pi_v,s)\big|_{\H_P(\pi_v)^{K_v}}
\end{split}
\end{equation}
This reduces our problem to the operators at the local places. We distinguish
between the archimedean and the non-archimedean case.

{\bf Case 1:} $v<\infty$. Define $A_v\colon \C\to \End(\cH_P(\pi_v)^{K_v})$ by
\[
A_v(q_v^{-s}):=R_{\oP|P}(\pi_v,s)\big|_{\H_P(\pi_v)^{K_v}}.
\]
This is a meromorphic function with values in the space of endomorphisms of a finite
dimensional vector space. It has the following properties. 
By the unitarity of $R_{\oP|P}(\pi_v,it)$, $t\in\R$, it follows that $A_v(z)$ is
holomorphic for $z\in S^1$ and satisfies $\|A_v(z)\|\le 1$, $|z|=1$. By
\cite[Theorem~2.1]{Ar4}, the matrix coefficients of $A_v(z)$ are rational 
functions. As in
\cite[(14)]{FiLaMu} we get
\begin{equation}
\int_{\R} \Big\|R_{\oP|P}(\pi_v,it)^{-1}R^\prime_{\oP|P}(\pi_v,it)
\big|_{\H_P(\pi_v)^{K_v}}
\Big\|(1+|t|)^{-1}dt\le\left(2+\frac{1}{2}\log q_v\right)\int_{S^1}\left\| 
A_v^\prime(z)\right\|\;|dz|.
\end{equation}
As explained above, $A_v$ satisfies the assumptions of 
\cite[Corollary~5.18]{FiLaMu}. Denote by $z_1,...,z_m\in \C\setminus S^1$ be the poles
of $A_v(z)$. Then $(z-z_1)\cdots(z-z_m)A_v(z)$ is a polynomial of degree $n$
with coefficients in $\End(\cH_{\pi_v}^{K_v})$ and by 
\cite[Corollary~5.18]{FiLaMu} we get
\[
\|A_v^\prime(z)\|\le\max\left(\max(n-m,0)+\sum_{j\colon |z_j|>1}
\frac{|z_j|^2-1}{|z_j-z|^2},\; \sum_{j\colon |z_j|<1}\frac{1-|z_j|^2}{|z_j-z|^2}
\right),\quad z\in S^1.
\]
To estimate the right hand side, we need the following lemma.
\begin{lem}\label{lem:poisson-kern}
There exists $C>0$ such that for all $z_0\in\C\setminus S^1$
\begin{equation}
\int_{S^1}\frac{|z_0|^2-1}{|z-z_0|^2}|dz|\le C.
\end{equation}
\end{lem}
\begin{proof}
First consider the case $|z_0|<1$. If we change variables by $z=e^{i\theta}$, 
then up to a constant, the integral equals the integral of the Poisson kernel
for the unit disc over the unit circle. From the theory of the Poisson kernel it
is well known that, as a function of $z_0$, this integral is a continuous 
function on the closed unit disc which is equal to 1 on the unit circle. Thus 
the lemma holds for $|z_0|<1$. For $|z_0|>1$ we use that
\[
\int_0^{2\pi}\frac{|z_0|^2-1}{|e^{i\theta}-z_0|^2}d\theta= 
\int_0^{2\pi}\frac{|z_0^{-1}|^2-1}{|e^{i\theta}-z_0^{-1}|^2}d\theta,
\]
which reduces the problem to the previous case.
\end{proof}

Next we estimate $m$ and $n$. First consider $m$. Let $J_{\oP|P}(\pi_v,s)$ be
the usual intertwining  operator so that
\[
R_{\oP|P}(\pi_v,s)=r_{\oP|P}(\pi_v,s)^{-1}J_{\oP|P}(\pi_v,s),
\]
where $r_{\oP|P}(\pi_v,s)$ is the normalizing factor \cite{Ar4}. By 
\cite[Theorem~2.2.2]{Sha1} there exists a polynomial $p(z)$ with $p(0)=1$ 
whose degree is bounded independently of $\pi_v$, such that 
$p(q_v^{-s})J_{\oP|P}(\pi_v,s)$ is holomorphic on $\C$. To deal with the normalizing factor we use \eqref{rat-funct} together with Lemma~\ref{lem-loc-int} to
count the
number of poles of $r_{\oP|P}(\pi_v,s)^{-1}$. This leads to a bound for $m$
which depends only on $\bG$. To estimate $n$ we fix an open compact subgroup
$K_v$ of $\bG(\Q_v)$. Our goal is now to estimate the order at $\infty$ of any
matrix coefficient of $R_{\oP|P}(\pi_v,s)$ regarded as a function of
$z=q_v^{-s}$.
Write $\pi_v$ as Langlands quotient $\pi_v=J_R^M(\delta_v,\mu)$ where $R$ is a
parabolic subgroup of $M$, $\delta_v$ a square integrable representation of
$M_R(\Q_v)$ and $\mu\in(\af^\ast_R/\af^\ast_M)_\C$ with $\Re(\mu)$ in the chamber
attached to $R$. Then by \cite[p. 30]{Ar4} we have
\[
 R_{\oP|P}(\pi_v,s)=R_{\oP(R)|P(R)}(\delta_v,s+\mu)
\]
with respect to the identifications described in \cite[p. 30]{Ar4}. Here $s$ is
identified with a point in $(\af_R^\ast/\af_G^\ast)_\C$ with respect to the 
canonical embedding $\af_M^\ast\subset\af_G^\ast$. Using again the factorization
of normalized intertwining operators we reduce the problem to the case of a 
square-integrable representation $\delta_v$. Moreover $\delta_v$ has to 
satisfy $[I_P^G(\delta_v,s)\big|_{K_v}\colon \one]\ge 1$. By \cite[Lemma~1]{Si1}
we have 
\begin{equation}\label{multipl}
[I_P^G(\delta_v,s)\big|_{K_v}\colon \one]\ge 1\Leftrightarrow
[\delta_v\big|_{K_v\cap M(\Q_v)}\colon\one]\ge 1
\end{equation}
Let $\Pi_2(M(\Q_v))$ be the space of square-integrable representations of
$M(\Q_v)$. This space has a manifold structure \cite{HC2}, \cite{Si1}.
By \cite[Theorem~10]{HC2} the set of square-integrable representations 
$\Pi_2(M(\Q_v),K_v)$ of $M(\Q_v)$ with $[\delta_v\big|_{K_v\cap M(\Q_v)}
\colon\one]\ge 1$ is a compact subset of $\Pi_2(M(\Q_v))$. Under the canonical 
action of $i\af_M$, the set $\Pi_2(M(\Q_v),K_v)$ decomposes into a finite number
of orbits. For $\mu\in i\af_M$ and $\delta_v\in\Pi_2(M(\Q_v),K_v)$, let 
$(\delta_v)_\mu\in \Pi_2(M(\Q_v),K_v)$ be the result of the canonical action.
Then it follows that
\[
R_{\oP|P}((\delta_v)_\mu,\lambda)=R_{\oP|P}(\delta_v,\lambda+\mu).
\]
In this way our problem is finally reduced to the consideration of
the matrix coefficients of $R_{\oP|P}(\pi_v,s)\big|_{K_v}$ for a finite number of
representations $\pi_v$. This implies that $n$ is bounded by a constant which is
independent of $\pi_v$. Together with Lemma \ref{lem:poisson-kern} it follows 
that for each finite place
$v$ of $\Q$ and each open compact subgroup $K_v$ of $G(\Q_v)$ there exists 
$C_v>0$ such that
\begin{equation}\label{loc-int-est1}
\int_{\R} \Big\|R_{\oP|P}(\pi_v,it)^{-1}R^\prime_{\oP|P}(\pi_v,it)\big|_{\H_P(\pi_v)^{K_v}}
\Big\|(1+|t|)^{-1}dt\le C_v
\end{equation}
for all $\pi_v\in\Pi(M(\Q_v))$ with $I_P^G(\pi_v)\big|_{\H_P(\pi_v)^{K_v}}\neq0$.

{\bf Case 2:} $v=\infty$. As above let $M\in\cL$ with $\dim(\af_M/\af_G)=1$
and $P\in\cP(M)$. Let $\pi_\infty\in\Pi(M(\R))$ and $\nu\in\Pi(\K_\infty)$.
As explained in \cite[Appendix]{MS}, there exist $w_1,...,w_r\in\C$ and 
$m\in\N$ such that the poles of $R_{\oP|P}(\pi_\infty,s)\big|_{\H_P(\pi_\infty)^\nu}$
are contained in $\cup_{j=1}^r\{w_j-k\colon k=1,...,m\}$. Moreover, by
\cite[Proposition~A.2]{MS} there exists $c>0$ which depends only on $G$, such 
that
\begin{equation}\label{est-const}
r\le c,\quad m\le c(1+\|\nu\|).
\end{equation}
Let $A\colon \C\to \H_{\pi_\infty}(\nu)$ be defined by 
\[
A(z):=R_{\oP|P}(\pi_\infty,z)\big|_{\H_P(\pi_\infty)^\nu}
\]
and let $b(z)=\prod_{j=1}^r\prod_{k=1}^m(z-w_j+k)$. Then it follows from ($R_6$) of
\cite[Theorem~2.1]{Ar4} that  $b(z)A(z)$ is a polynomial function. Moreover,
by unitarity of $R_{\oP|P}(\pi_\infty,it)$, $t\in\R$, we have $\|A(it)\|\le 1$. 
Thus $A(z)$ satisfies the assumptions of \cite[Lemma~5.19]{FiLaMu}. Thus by
\cite[Lemma~5.19]{FiLaMu} and \eqref{est-const} we get
\begin{equation}\label{log-esti-archim}
\begin{split}
\int_\R\Big\|R_{\oP|P}(\pi_\infty,it)^{-1}R_{\oP|P}^\prime(\pi_\infty,it)
\big|_{\H_P(\pi_\infty)^\nu}&\Big\|(1+|t|^2)^{-1} dt=\int_\R\| A^\prime(it)\|(1+|t|^2)^{-1} 
dt\\
&\ll\sum_{j=1}^r\sum_{k=1}^m
\frac{|\Re(w_j)-k|+1}{(|\Re(w_j)-k|+1)^2+(\Im(w_j))^2}\\
&\ll 1+\|\nu\|.
\end{split}
\end{equation} 
Combining \eqref{log-der-inter1}, \eqref{loc-int-est1} and \eqref{log-esti-archim},
the proposition follows.
\end{proof}

\section{The analytic torsion}\label{sec-analtor}
\setcounter{equation}{0}

As before we consider a reductive quasi-split algebraic group $\bG$ over $\Q$,
an open compact subgroup $K_f$ of $\bG(\A_f)$ and the adelic quotient 
\[
X:=X(K_f),
\]
defined by \eqref{adelic-quot}.
For simplicity we assume that $\bG$ is semisimple and simply connected. Let
$\K_\infty\subset G(\R)$ be a maximal compact subgroup and let 
$\widetilde X=\bG(\R)/\K_\infty$. Then by strong approximation we have
\begin{equation}\label{ad-qu1}
X=\Gamma\bs\widetilde X,
\end{equation}
where $\Gamma=(\bG(\R)\times K_f)\cap \bG(\Q)$. In general, there are finitely
many arithmetic subgroups $\Gamma_j\subset \bG(\R)^1$, $j=1,...,l$, such that
$X(K_f)$ is the disjoint union of $\Gamma_j\bs \widetilde X$, $j=1,...,l$. 

\subsection{The Hodge-Laplace operator and heat kernels}
Let
$\tau$ be an irreducible finite-dimensional complex representation of $\bG(\R)$
on $V_\tau$. Let $E_\tau$ be the flat vector bundle over $X$ associated to the 
restriction of $\tau$ to $\Gamma$. Let $\widetilde E^\tau$ be the homogeneous
vector bundle associated to $\tau|_{\K_\infty}$ and let 
$E^\tau:=\Gamma\bs \widetilde E^\tau$. There is a canonical isomorphism
\begin{equation}\label{iso-vb}
E^\tau\cong E_\tau
\end{equation}
\cite[Proposition~3.1]{MM}. By \cite[Lemma~3.1]{MM}, there exists an 
inner product $\left<\cdot,\cdot\right>$ on $V_{\tau}$,
which is unique up to scaling, which satisfies
\begin{enumerate}
\item $\left<\tau(Y)u,v\right>=-\left<u,\tau(Y)v\right>$ for all 
$Y\in\mathfrak{k}$, $u,v\in V_{\tau}$
\item $\left<\tau(Y)u,v\right>=\left<u,\tau(Y)v\right>$ for all 
$Y\in\mathfrak{p}$, $u,v\in V_{\tau}$.
\end{enumerate}
Such an inner product is called admissible. We fix an 
admissible inner product. Since $\tau|_{\K_\infty}$ is unitary with respect to 
this inner product, it induces a metric on $E^{\tau}$, and by \eqref{iso-vb} 
on $E_\tau$, which we also call 
admissible. Let $\Lambda^{p}(E_{\tau})=\Lambda^pT^*(X)\otimes E_\tau$. Let
\begin{align}\label{repr4}
\nu_{p}(\tau):=\Lambda^{p}\Ad^{*}\otimes\tau:\:\K_\infty\rightarrow\GL
(\Lambda^{p}\mathfrak{p}^{*}\otimes V_{\tau}).
\end{align}
Then by \eqref{iso-vb} there is a canonical isomorphism
\begin{align}\label{pforms}
\Lambda^{p}(E_{\tau})\cong\Gamma\backslash(\bG(\R)\times_{\nu_{p}(\tau)}
(\Lambda^{p}\mathfrak{p}^{*}\otimes V_{\tau})).
\end{align}
of locally homogeneous vector bundles. 
Let  $\Lambda^{p}(X,E_{\tau})$ be the space the smooth $E_{\tau}$-valued 
$p$-forms on $X$. Let
\begin{align}\label{globsect}
\begin{split}
C^{\infty}(\bG(\R),\nu_p(\tau)):=\{f:\bG(\R)\rightarrow \Lambda^{p}\mathfrak{p}^{*}\otimes V_{\tau}\colon f\in C^\infty,\:
f(g&k)=\nu_p(\tau)(k^{-1})f(g),\\
&\forall g\in \bG(\R), \,\forall k\in \K_\infty\},
\end{split}
\end{align}
and
\begin{align}\label{globsect1}
C^{\infty}(\Gamma\backslash \bG(\R),\nu_p(\tau)):=
\left\{f\in C^{\infty}(\bG(\R),\nu_p(\tau))\colon 
f(\gamma g)=f(g),\;\forall g\in \bG(\R),\; \forall \gamma\in\Gamma\right\}.
\end{align}
The isomorphism \eqref{pforms} induces an isomorphism
\begin{align}\label{isoschnitte}
\Lambda^{p}(X,E_{\tau})\cong C^{\infty}(\Gamma\backslash \bG(\R),\nu_{p}(\tau)).
\end{align}
A corresponding isomorphism also holds for the spaces of $L^{2}$-sections.
Let $\Delta_{p}(\tau)$ be the 
Hodge-Laplacian on $\Lambda^{p}(X,E_{\tau})$ with respect to the admissible 
metric in $E_\tau$. Let $R_\Gamma$ denote the right regular representation 
of $\bG(\R)$ in $L^2(\Gamma\bs \bG(\R))$. Let $\Omega\in\cZ(\gf_\C)$ be the
Casimir element. By \cite[(6.9)]{MM} it follows that 
with respect to the isomorphism \eqref{isoschnitte} one has
\begin{equation}\label{laplace1}
\Delta_{p}(\tau)=-R_\Gamma(\Omega)+\tau(\Omega)\Id.
\end{equation}
Let $\widetilde E_\tau\to \widetilde X$ be the lift of $E_\tau$ to 
 $\widetilde X$.
There is a canonical isomorphism 
\begin{equation}\label{iso-vbcov}
\Lambda^p(\widetilde X,\widetilde E_\tau)\cong C^\infty(\bG(\R),\nu_p(\tau)).
\end{equation}
Let $\widetilde\Delta_p(\tau)$ be the lift of $\Delta_p(\tau)$ to 
$\widetilde X$.
Then again it follows  from \cite[(6.9)]{MM} that with respect to the
isomorphism \eqref{iso-vbcov} we have
\begin{equation}\label{kuga}
\widetilde \Delta_p(\tau)=-R(\Omega)+\tau(\Omega)\Id.
\end{equation}
Let $e^{-t\widetilde\Delta_p(\tau)}$ be the 
corresponding heat semigroup. Regarded as an operator in the Hilbert space
$L^2(\bG(\R),\nu_p(\tau))$, it is a convolution operator with kernel
\begin{align}\label{DefHH}
H^{\tau,p}_t\colon \bG(\R)\to\End(\Lambda^p\mathfrak p^*\otimes
V_\tau)
\end{align} 
which belongs to $C^\infty\cap L^2$ and  satisfies the covariance property
\begin{equation}\label{covar}
H^{\tau,p}_t(k^{-1}gk')=\nu_p(\tau)(k)^{-1} H^{\tau,p}_t(g)\nu_p(\tau)(k')
\end{equation}
with respect to the representation \eqref{repr4}. Moreover, for all $q>0$ we 
have 
\begin{equation}\label{schwartz1}
H^{\tau,p}_t \in (\mathcal{C}^q(\bG(\R))\otimes
\End(\Lambda^p\pf^*\otimes V_\tau))^{\K_\infty\times \K_\infty}, 
\end{equation}
where $\mathcal{C}^q(G(\R))$ denotes Harish-Chandra's $L^q$-Schwartz space
(see \cite[Sect. 4]{MP2}). 
Let $h^{\tau,p}_t\in C^\infty(\bG(\R))$ be defined by
\begin{equation}\label{tr-kern}
h^{\tau,p}_t(g)=\Tr H^{\tau,p}_t(g),\quad g\in \bG(\R).
\end{equation}
Let $\chi_{K_f}$ be defined by \eqref{char-funct}. We define
$\phi^{\tau,p}_t\in C^\infty(\bG(\A))$ by
\begin{equation}\label{extension5}
\phi^{\tau,p}_t(g_\infty g_f):=h^{\tau,p}_t(g_\infty)\chi_{K_f}(g_f)
\end{equation}
for $g_\infty\in \bG(\R)$ and $g_f\in \bG(\A_f)$. 
Following the definition \ref{reg-trace}, we define the regularized trace of
$e^{-t\Delta_p(\tau)}$ by
\begin{equation}\label{regtr-heat}
\Tr_{\reg}\left(e^{-t\Delta_p(\tau)}\right):=J_{\geo}(\phi^{\tau,p}_t).
\end{equation}

\subsection{The asymptotic behavior of the regularized trace}
Our next goal is to determine the asymptotic behavior of 
$\Tr_{\reg}\left(e^{-t\Delta_p(\tau)}\right)$ as $t\to0$ and $t\to\infty$.
For $\bG=\GL(n)$ or $\bG=\SL(n)$ this has been carried out in \cite{MM1}. 
Concerning the asymptotic behavior as $t\to0$, we have
\begin{lem}\label{lem:asymp-exp} 
There exist $a_j,b_{ij}\in\C$, $j\in\N_0$, $i=0,...,r_j$, such that 
as $t\to0$, there is an asymptotic expansion
\[
\Tr_{\reg}\left(e^{-t\Delta_p(\tau)}\right)\sim t^{-d/2}\sum_{j=0}^\infty a_j t^{j}+
t^{-(d-1)/2}\sum_{j=0}^\infty\sum_{i=0}^{r_j} b_{ij}t^{j/2} (\log t)^i.
\]
\end{lem}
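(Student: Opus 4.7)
The plan is to reduce Lemma~\ref{lem:asymp-exp} to Theorem~\ref{thm:asymptotic:geom} by writing $\Delta_p(\tau)$ as a perturbation of the Bochner-Laplace operator $\Delta_{\nu_p(\tau)}$ by a parallel bundle endomorphism, and exploiting linearity of the regularized trace.

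First, I would combine Kuga's formula \eqref{laplace1}, its cover-level analog \eqref{kuga}, and the analogous identity for the Bochner-Laplace operator to write
\[
\Delta_p(\tau) = \Delta_{\nu_p(\tau)} + A,
\]
where $A = \tau(\Omega)\Id - \nu_p(\tau)(\Omega_{\K_\infty})$ is the parallel bundle endomorphism of $E_{\nu_p(\tau)}$ referred to in the introduction (formally, $\nu_p(\tau)(\Omega_{\K_\infty})$ acts fiberwise and is $\K_\infty$-equivariant). Since $A$ is $\K_\infty$-equivariant, it preserves every $\K_\infty$-isotypical subspace of $\Lambda^p\pf^*\otimes V_\tau$ and acts as a scalar on each irreducible constituent, so $A$ commutes with $\Delta_{\nu_p(\tau)}$. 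Here I use that $\tau|_{\K_\infty}$ is unitary for the admissible inner product, so $\nu_p(\tau)$ is a finite-dimensional unitary representation of $\K_\infty$.

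Next, I would decompose $\Lambda^p\pf^*\otimes V_\tau = \bigoplus_j V^{(j)}$ into simultaneous eigenspaces of $A$ and irreducible $\K_\infty$-invariant subspaces, with $A|_{V^{(j)}} = a_j\Id$. Writing $\nu^{(j)}$ for the $\K_\infty$-representation on $V^{(j)}$, one has $\nu_p(\tau) = \bigoplus_j \nu^{(j)}$, and the Bochner-Laplace operator decomposes as $\Delta_{\nu_p(\tau)} = \bigoplus_j \Delta_{\nu^{(j)}}$. Since $A$ and $\Delta_{\nu_p(\tau)}$ commute and block-diagonalize along the same decomposition, the heat kernels satisfy
\[
H_t^{\tau,p}(g) = \bigoplus_j e^{-ta_j}\, H_t^{\nu^{(j)}}(g),
\]
and after taking fiber traces and forming the adelic test functions,
\[
\phi_t^{\tau,p} = \sum_j e^{-ta_j}\, \phi_t^{\nu^{(j)}}.
\]

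Then, by linearity of $J_{\geo}$ (applied to the finite sum above) and the definition \eqref{regtr-heat}, one obtains
\[
\Tr_{\reg}\!\left(e^{-t\Delta_p(\tau)}\right) = \sum_j e^{-ta_j}\, J_{\geo}(\phi_t^{\nu^{(j)}}) = \sum_j e^{-ta_j}\, \Tr_{\reg}\!\left(e^{-t\Delta_{\nu^{(j)}}}\right).
\]
Theorem~\ref{thm:asymptotic:geom} applies to each unitary representation $\nu^{(j)}$ of $\K_\infty$, providing for each $j$ an asymptotic expansion of the shape in \eqref{asex}. Since $e^{-ta_j}$ is entire, its Taylor expansion $\sum_{l\ge0}(-a_j)^l t^l/l!$ may be multiplied termwise into each such expansion without altering its form (the $t^{-d/2}\sum a_k^{(j)} t^k$ part remains of that shape, and likewise the $t^{-(d-1)/2}\sum b_{ik}^{(j)} t^{k/2}(\log t)^i$ part). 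Summing the finitely many resulting expansions and rearranging yields an asymptotic expansion of exactly the form stated in the lemma, which completes the argument. There is no significant obstacle here: the only points to verify are the commutativity of $A$ with $\Delta_{\nu_p(\tau)}$ and the preservation of the expansion shape under multiplication by an entire function and finite summation, both of which are routine.
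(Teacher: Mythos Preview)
Your proposal is correct and follows essentially the same route as the paper: both identify $\Delta_p(\tau)=\Delta_{\nu_p(\tau)}+\tau(\Omega)\Id-\nu_p(\tau)(\Omega_{\K_\infty})$, decompose $\nu_p(\tau)$ into irreducible $\K_\infty$-types on which the perturbation acts by scalars, express $h_t^{\tau,p}$ as a finite linear combination $\sum m(\nu)e^{-t(\tau(\Omega)-\nu(\Omega_{\K_\infty}))}h_t^{\nu}$, and then apply Theorem~\ref{thm:asymptotic:geom} termwise. The only cosmetic difference is that the paper indexes the sum by $\nu\in\Pi(\K_\infty)$ with multiplicities $m(\nu)$ while you index by the individual irreducible summands $V^{(j)}$; also, beware that your use of $a_j$ for the eigenvalues of $A$ collides with the $a_j$ in the statement of the lemma.
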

\begin{proof}
Let $\Omega_{\K_\infty}\in\cZ(\kf_\C)$ be the Casimir element of $\K_\infty$. For
$p=0,...,n$ put
\[
E_p(\tau):=\nu_p(\tau)(\Omega_{\K_\infty}),
\]
which we regard as an endomorphism of $\Lambda^p\pg^*\otimes V_\tau$.
It defines
an endomorphism of $\Lambda^pT^\ast(X)\otimes E_\tau$. By 
\cite[Proposition~1.1]{Mia} and \eqref{kuga} we have
\[
\widetilde \Delta_p(\tau)=\widetilde\Delta_{\nu_p(\tau)}+\tau(\Omega)\Id-E_p(\tau).
\]
Let $\nu_p(\tau)=\oplus_{\nu\in\Pi(\K_\infty)} m(\nu)\nu$ be the decomposition
of $\nu_p(\tau)$ into irreducible representations. This induces a
corresponding decomposition of the homogeneous vector bundle
\begin{equation}\label{vb-decomp}
\widetilde E_{\nu_p(\tau)}=\bigoplus_{\nu\in\Pi(\K_\infty)}m(\nu)\widetilde E_\nu.
\end{equation}
With respect to this decomposition we have 
\begin{equation}\label{decomp1}
E_p(\tau)=\bigoplus_{\nu\in\Pi(\K_\infty)}m(\nu)
\sigma(\Omega_{\K_\infty})\Id_{V_\nu},
\end{equation}
where $\nu(\Omega_{\K_\infty})$ is the Casimir eigenvalue of $\nu$ and 
$V_\nu$ the corresponding  representation space. Let $\widetilde\Delta_\nu$
be the Bochner-Laplace operator associated to $\nu$. By \eqref{vb-decomp}
we get a corresponding decomposition of 
$C^\infty(\widetilde X,\widetilde E_{\nu_p(\tau)})$ and with respect to this
decomposition we have
\begin{equation}\label{decomp2}
\widetilde\Delta_{\nu_p(\tau)}=\bigoplus_{\nu\in\Pi(\K_\infty)} m(\nu)
\widetilde\Delta_\nu.
\end{equation}
This shows that 
$\widetilde\Delta_{\nu_p(\tau)}$ commutes with $E_p(\tau)$, and therefore
we have
\begin{equation}\label{equ-kernel}
H^{\tau,p}_t=e^{-t(\tau(\Omega)-E_p(\tau))}\circ H^{\nu_p(\tau)}_t.
\end{equation}
Let $H_t^\nu$ be the kernel of $e^{-t\widetilde\Delta_\nu}$ and $h_t^\nu:=\Tr\circ
H_t^\nu$. Then it follows from \eqref{decomp1} and \eqref{decomp2} that
\begin{equation}\label{equ-kernel2}
h^{\tau,p}_t=\sum_{\nu\in\Pi(\K_\infty)} m(\nu)
e^{-t(\tau(\Omega)-\nu(\Omega_{\K_\infty}))}h^{\nu}_t,
\end{equation}
Using the definition of $\phi^{\tau,p}_t$ and $\phi^\nu_t$, respectively,  we get
\[
\begin{split}
\Tr_{\reg}\left(e^{-t\Delta_p(\tau)}\right)=J_{\geo}(\phi^{\tau,p}_t)&=
\sum_{\nu\in\Pi(\K_\infty)}m(\nu)e^{-t(\tau(\Omega)-\nu(\Omega_{\K_\infty}))}J_{\geo}(\phi^\nu_t)\\
&=\sum_{\nu\in\Pi(\K_\infty)}m(\nu)e^{-t(\tau(\Omega)-\nu(\Omega_{\K_\infty}))}
\Tr_{\reg}\left(e^{-t\Delta_\nu}\right).
\end{split}
\]
Applying Theorem~\ref{thm:asymptotic:geom} concludes the proof.
\end{proof}

To study the asymptotic behavior as $t\to\infty$ we use the Arthur trace 
formula \eqref{tracef1}. By \cite[Corollary~1]{FLM1}, $J_{\spec}$  is a 
distribution on 
$\Co(\bG(\A);K_f)$ (see section \ref{sec-prelim} for its definition) and by \cite[Theorem~7.1]{FL1}, $J_{\geo}$ is continuous on
$\Co(\bG(\A);K_f)$. This implies that \eqref{tracef1} holds for 
$\phi_t^{\tau,p}$ and we have
\begin{equation}\label{regtrace1a}
\Tr_{\reg}\left(e^{-t\Delta_p(\tau)}\right)=J_{\spec}(\phi_t^{\tau,p}).
\end{equation}
Now we apply Theorem~\ref{thm-specexpand} to study the asymptotic 
behavior as $t\to\infty$ of the right hand side.  First we have 
\begin{equation}
J_{\spec}(\phi_t^{\tau,p})=\sum_{[M]} J_{\spec,M} (\phi_t^{\tau,p}),
\end{equation}
where the sum ranges over the conjugacy classes of Levi subgroups of $G$ and 
$J_{\spec,M} (\phi_t^{\tau,p})$ is given by \eqref{specside2}.

To analyze these terms, we proceed as in \cite[Section 13]{MM1}.
Let $M\in\cL$ and $P\in\cP(M)$. We use the notation 
introduced in section \ref{sec-trform}. Recall that the discrete subspace
$L^2_{\di}(\Ai M(\Q)\bs M(\A))$ splits as the completed direct sum of its
$\pi$-isotypic components for $\pi\in\Pi_{\di}(M(\A))$. We have a corresponding
decomposition of $\bar{\cA}^2(P)$ as a direct sum of Hilbert spaces
$\hat\oplus_{\pi\in\Pi_{\di}(M(\A))}\bar{\cA}^2_\pi(P)$. Similarly, we have the
algebraic direct sum decomposition
\[
\cA^2(P)=\bigoplus_{\pi\in\Pi_{\di}(M(\A))}\cA^2_\pi(P),
\]
where $\cA^2_\pi(P)$ is the ${\bf K}$-finite part of $\bar{\cA}^2_\pi(P)$. 
Let $\cA^2_\pi(P)^{K_f}$ be the subspace of $K_f$-invariant functions in
$\cA^2_\pi(P)$, and for any $\sigma\in\Pi(\K_\infty)$ let 
$\cA^2_\pi(P)^{K_f,\sigma}$ be the $\sigma$-isotypic subspace of 
$\cA^2_\pi(P)^{K_f}$. Recall that $\cA^2_\pi(P)^{K_f,\sigma}$ is finite dimensional
\cite[Prop. 3.5]{Mu1}.

For $P,Q\in\cP(M)$ let $M_{Q|P}(\lambda)$ be the intertwining operator 
\eqref{intertw0}. Denote by
$M_{Q|P}(\pi,\lambda)$ the restriction of $M_{Q|P}(\lambda)$ to
$\cA^2_\pi(P)$. Recall that the operator $\Delta_{\mathcal{X}}(P,\lambda)$, which 
appears in the formula \eqref{specside2}, is defined by \eqref{intertw3}.
Its definition involves the intertwining operators $M_{Q|P}(\lambda)$. If we 
replace $M_{Q|P}(\lambda)$ by its restriction $M_{Q|P}(\pi,\lambda)$ to
$\cA^2_\pi(P)$, we obtain the restriction $\Delta_{\mathcal{X}}(P,\pi,\lambda)$ of
$\Delta_{\mathcal{X}}(P,\lambda)$ to $\cA^2_\pi(P)$. Similarly, let $\rho_\pi(P,\lambda)$
be the induced representation in $\bar{\cA}^2_\pi(P)$. 
Fix $\beta\in\bases_{P,L_s}$ and $s\in W(M)$.  Then for the integral 
on the right of \eqref{specside2} with $h=\phi_t^{\tau,p}$ we get
\begin{equation}\label{specside3}
\sum_{\pi\in\Pi_{\di}(M(\A))}\int_{i(\af^G_{L_s})^*}\Tr\left(
\Delta_{\dtup_{L_s}(\bss)}(P,\pi,\lambda)M(P,\pi,s)\rho_\pi(P,\lambda,\phi^{\tau,p}_t)
\right)\;d\lambda.
\end{equation}
In order to deal with the integrand, we need the following result. Let $\pi$
be a unitary  admissible representation of $\bG(\R)$. Let $A\colon \H_\pi\to
\H_\pi$ be a bounded operator which is an intertwining operator for 
$\pi|_{\K_\infty}$. Then $A\circ\pi(h_t^\nu)$ is a finite rank operator. Define
an operator $\tilde A$ on $\H_\pi\otimes V_\nu$ by $\tilde A:=A\otimes\Id$.
Then by \cite[(9.13)]{MM1} we have
\begin{equation}\label{TrFT}
\Tr(A\circ\pi(h_t^\nu))=e^{t(\pi(\Omega)-\nu(\Omega_{\K_\infty}))}
\Tr\left(\tilde A|_{(\H_\pi\otimes V_\nu)^{\K_\infty}}\right).
\end{equation}
We will apply this to the induced representation $\rho_\pi(P,\lambda)$. 
Let $P,Q\in\cP(M)$ and $\nu\in\Pi(\K_\infty)$.  Assume that 
$(\cA^2_\pi(P)^{K_f}\otimes V_{\nu})^{\K_\infty}\neq0$. Denote by 
$\widetilde M_{Q|P}(\pi,\nu,\lambda)$ the restriction of 
\[
M_{Q|P}(\pi,\lambda)\otimes\Id\colon \cA^2_\pi(P)\otimes V_{\nu}\to 
\cA^2_\pi(P)\otimes V_{\nu}
\]
to $(\cA^2_\pi(P)^{K_f}\otimes V_{\nu})^{\K_\infty}$. Denote by 
$\widetilde\Delta_{\dtup_{L_s}(\bss)}(P,\pi,\nu,\lambda)$ and 
$\widetilde M(P,\pi,\nu,s)$ the corresponding restrictions. 
Let $m(\pi)$ denote the multiplicity with which $\pi$ occurs in the regular
representation of $M(\A)$ in $L^2_{\di}(\Ai M(\Q)\bs M(\A))$. Then
\begin{equation}\label{iso-ind}
\rho_\pi(P,\lambda)\cong \oplus_{i=1}^{m(\pi)}\Ind_{P(\A)}^{G(\A)}(\pi,\lambda).
\end{equation}
Fix positive restricted roots of $\af_P$ and let $\rho_{\af_P}$ denote the 
corresponding half-sum of these roots. For $\xi\in \Pi(M(\R))$ and
$\lambda\in\af^\ast_P$ let 
\[
\pi_{\xi,\lambda}:=\Ind_{P(\R)}^{G(\R)}(\xi\otimes e^{i\lambda})
\]
be the unitary induced representation. Let $\xi(\Omega_M)$ be the Casimir
eigenvalue of $\xi$. Define a constant $c(\xi)$ by
\begin{equation}\label{casimir4}
c(\xi):=-\langle\rho_{\af_P},\rho_{\af_P}\rangle+\xi(\Omega_M).
\end{equation}
Then for $\lambda\in\af^\ast_P$ one has
\begin{equation}\label{casimir5}
\pi_{\xi,\lambda}(\Omega)=-\|\lambda\|^2+c(\xi)
\end{equation}
(see \cite[Theorem~8.22]{Kn}). Let 
\begin{equation}\label{def-F}
\cT:=\{\nu\in\Pi(\K_\infty)\colon [\nu_p(\tau)\colon\nu]\neq 0\}.
\end{equation}
Using \eqref{equ-kernel2}, \eqref{iso-ind} and \eqref{TrFT}, it follows that 
\eqref{specside3} is equal to
\begin{equation}\label{specside4}
  \begin{split}
    \sum_{\pi\in\Pi_{\di}(M(\A))}\sum_{\nu\in\cT}m(\pi)&
  e^{-t(\tau(\Omega)-c(\pi_\infty))}\\
&\cdot\int_{i(\af^G_{L_s})^*}e^{-t\|\lambda\|^2}\Tr\left(
\widetilde\Delta_{\dtup_{L_s}(\bss)}(P,\pi,\nu,\lambda)
\widetilde M(P,\pi,\nu,s)\right)\;d\lambda.
\end{split}
\end{equation}
Since $M(P,\pi,s)$ is unitary, \eqref{specside4} can be estimated by
\begin{equation}\label{specside5}
\begin{split}
  \sum_{\pi\in\Pi_{\di}(M(\A))}\sum_{\nu\in\cT}&m(\pi)
  \dim\left(\cA^2_\pi(P)^{K_f,\nu}\right)\\
&\cdot e^{-t(\tau(\Omega)-c(\pi_\infty))}
\int_{i(\af^G_{L_s})^*}e^{-t\|\lambda\|^2}\|
\widetilde\Delta_{\dtup_{L_s}(\bss)}(P,\pi,\nu,\lambda)\|\;d\lambda.
\end{split}
\end{equation}
For $\pi\in\Pi(M(\A))$ denote by $\lambda_{\pi_\infty}$ the Casimir eigenvalue
of the restriction of $\pi_\infty$ to $M(\R)^1$. Given $\lambda>0$, let 
\[
\Pi_{\di}(M(\A);\lambda):=\{\pi\in\Pi(M(\A))\colon |\lambda_{\pi_\infty}|\le
\lambda\}.
\]
Let $d=\dim M(\R)^1/\K_\infty^M$. If we use \cite[Theorem~0.1]{Mu3}
and argue in the same way as in the proof of \cite[Proposition~3.5]{Mu1} 
it follows that for every 
$\nu\in\Pi(\K_\infty)$ there exists $C>0$ such that
\begin{equation}\label{estim10}
\sum_{\pi\in\Pi_{\di}(M(\A);\lambda)}m(\pi)\dim\cA^2_\pi(P)^{K_f,\nu}\le C(1+\lambda^{2d})
\end{equation}
for all $\lambda\ge 0$. 
Next we estimate the integral in \eqref{specside5}. We use the notation of
section \ref{sec-trform}.
Let $\bss=(\beta_1^\vee,\dots,\beta_m^\vee)$ and $\dtup_{L_s}(\bss)=
(Q_1,\dots,Q_m)\in\Xi_{L_s}(\bss)$ with with $Q_i=\langle P_i,P_i'\rangle$, 
$P_i|^{\beta_i}P_i'$, $i=1,\dots,m$.
 Using the definition \eqref{intertw3} of 
$\Delta_{\dtup_{L_s}(\bss)}(P,\pi,\nu,\lambda)$, it follows that we can bound the
integral by a constant multiple of
\begin{equation}\label{est-integral}
\dim(\nu)\int_{i(\af^G_{L_s})^*}e^{-t\|\lambda\|^2}\prod_{i=1}^m\left\| 
\delta_{P_i|P_i^\prime}(\lambda)\Big|_{\cA^2_\pi(P_i^\prime)^{K_f,\nu}}\right\|
\;d\lambda,
\end{equation}
where $\delta_{P_i|P_i^\prime}(\lambda)$ is defined by \eqref{intertw2}.
We introduce new coordinates $s_i:=\langle\lambda,\beta_i^\vee\rangle$,
$i=1,\dots,m$, on $(\af^G_{L_s,\C})^\ast$. Using \eqref{normalization} and
\eqref{intertw2}, we
can write
\begin{equation}\label{delta}
\delta_{P_i|P_i^\prime}(\lambda)=\frac{n^\prime_{\beta_i}(\pi,s_i)}{n_{\beta_i}(\pi,s_i)}
+j_{P_i^\prime}\circ(\Id\otimes R_{P_i|P_i^\prime}(\pi,s_i)^{-1}
R^\prime_{P_i|P_i^\prime}(\pi,s_i))\circ j_{P_i^\prime}^{-1}.
\end{equation}
Put
\[
\cA^2_\pi(P)^{K_f,\cT}=\bigoplus_{\nu\in\cT}\cA^2_\pi(P)^{K_f,\nu},
\]
where $\cT$ is defined by \eqref{def-F}. 
It follows from \cite[Theorem~5.3]{Mu2} that there
exist $N,k\in\N$ and $C>0$ such that 
\begin{equation}\label{log-deriv}
\int_{i\R}\left|\frac{n^\prime_{\beta_i}(\pi,s)}{n_{\beta_i}(\pi,s)}\right|
(1+|s|^2)^{-k}\;ds\le C(1+\lambda_{\pi_\infty}^2)^N,\;i=1,\dots,m,
\end{equation}
for all $\pi\in \Pi_{\di}(M(\A))$ with $\cA^2_\pi(P)^{K_f,\cT}\neq 0$. 
Combining \eqref{delta}, \eqref{log-deriv} and Proposition~\ref{log-der},
it follows that  we have
\[
\int_{i(\af^G_{L_s})^*}e^{-t\|\lambda\|^2}\prod_{i=1}^m\left\| 
\delta_{P_i|P_i^\prime}(\lambda)\Big|_{\cA^2_\pi(P_i^\prime)^{K_f,\nu}}\right\|
\;d\lambda\ll (1+\lambda_{\pi_\infty}^2)^{mN}
\]
for all $t\ge 1$, and $\pi\in \Pi_{\di}(M(\A))$ with $\cA^2_\pi(P)^{K_f,\cT}\neq 0$.
Thus \eqref{specside5} can  estimated by a constant multiple of
\begin{equation}\label{est-specside1}
\sum_{\pi\in\Pi_{\di}(M(\A))}\sum_{\nu\in\cT}m(\pi)\dim\left(\cA^2_\pi(P)^{K_f,\nu}\right)
(1+\lambda_{\pi_\infty}^2)^N e^{-t(\tau(\Omega)-c(\pi_\infty))}.
\end{equation}
Now we can proceed as in \cite{MM1}. For the convenience of the reader we 
recall the arguments. By \eqref{casimir4} we have
\begin{equation}\label{eigenv}
\tau(\Omega)-c(\pi_\infty)=\tau(\Omega)+\|\rho_\af\|^2-\lambda_{\pi_\infty}.
\end{equation}
Together with \cite[Lemma~13.2]{MM1}, it follows that there 
exists $\lambda_0> 0$ such that
\[
\tau(\Omega)-c(\pi_\infty)\ge |\lambda_{\pi_\infty}|/2
\]
for all $\pi\in \Pi_{\di}(M(\A))$ with $\cA^2_\pi(P)^{K_f,\cT}\neq 0$ and
$|\lambda_{\pi_\infty}|\ge \lambda_0$. We decompose the sum over $\pi$ in 
\eqref{est-specside1} in two summands $\Sigma_1(t)$ and $\Sigma_2(t)$,
where in $\Sigma_1(t)$ the summation runs over all $\pi$ with 
$|\lambda_{\pi_\infty}|\le \lambda_0$. Using \eqref{estim10}, it follows that
for $t\ge 1$
\[
\Sigma_2(t)\ll e^{-t|\lambda_0|/2}.
\]
Since $\Sigma_1(t)$ is a finite sum, both in $\pi$ and $\nu$, it follows from
\cite[Lemma~13.1]{MM1} that there exists $c>0$ such that
\[
\Sigma_1(t)\ll e^{-ct}
\]
for $t\ge 1$. Recall that \cite[Lemma~13.1]{MM1} requires that $P=MAN$ is a
proper parabolic subgroup of $G$. Putting everything together we obtain the 
following result.
\begin{lem}\label{lem-trspec}
Let $\tau\in \Rep(\bG(\R))$. Assume that $\tau\not\cong\tau_\theta$. Let $M$ be a
proper Levi subgroup of $\bG$. There exists $c>0$ such that
\[
J_{\spec,M}(\phi_t^{\tau,p})=O(e^{-ct})
\]
for $t\ge 1$.
\end{lem}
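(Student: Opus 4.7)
The plan is to derive the bound directly from the spectral side formula in Theorem~\ref{thm-specexpand} applied to the test function $\phi_t^{\tau,p}$, by carrying out exactly the analysis that precedes the lemma statement. First I would expand the trace-integral in \eqref{specside2} along the discrete spectrum of $M(\A)$, writing $J_{\spec,M}(\phi_t^{\tau,p})$ as a finite sum (over $s\in W(M)$ and $\bss\in\bases_{P,L_s}$) of expressions of the form \eqref{specside3}. Using the heat-kernel identity \eqref{TrFT} together with the Casimir computation \eqref{casimir5} and the decomposition \eqref{iso-ind}, the contribution of each $\pi$ factors as a scalar $e^{-t(\tau(\Omega)-c(\pi_\infty))}$ times a Gaussian integral against $e^{-t\|\lambda\|^2}$ of a trace taken over the finite-dimensional space $(\cA^2_\pi(P)^{K_f}\otimes V_\nu)^{\K_\infty}$, for $\nu\in\cT$.

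Next I would bound the integrand. Unitarity of $M(P,\pi,s)$ and the definition \eqref{intertw3} of $\Delta_{\dtup_{L_s}(\bss)}(P,\pi,\nu,\lambda)$ reduce the problem to bounding a product of operator norms $\|\delta_{P_i|P_i'}(\lambda)\|$ against the Gaussian, as in \eqref{est-integral}. Switching to coordinates $s_i=\langle\lambda,\beta_i^\vee\rangle$ and inserting the factorization \eqref{delta}, the two essential inputs are: the bound \eqref{log-deriv} on the winding number of the global normalizing factor from \cite[Theorem~5.3]{Mu2}, which is polynomial in $\lambda_{\pi_\infty}$; and Proposition~\ref{log-der}, which provides uniform integrated bounds on logarithmic derivatives of normalized local intertwining operators. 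Together these yield the polynomial-in-$\lambda_{\pi_\infty}$ estimate displayed in \eqref{est-specside1}.

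The final step is to extract exponential decay in $t$ from \eqref{est-specside1}. Using the identity \eqref{eigenv}, I would invoke \cite[Lemma~13.2]{MM1}, together with the assumption $\tau\not\cong\tau_\theta$, to produce a $\lambda_0>0$ such that $\tau(\Omega)-c(\pi_\infty)\ge |\lambda_{\pi_\infty}|/2$ for all $\pi\in\Pi_{\di}(M(\A))$ with $\cA^2_\pi(P)^{K_f,\cT}\neq 0$ and $|\lambda_{\pi_\infty}|\ge\lambda_0$. Splitting the sum over $\pi$ accordingly, the tail sum converges absolutely and decays like $e^{-t\lambda_0/2}$, using the polynomial multiplicity estimate \eqref{estim10} derived from \cite[Theorem~0.1]{Mu3}. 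The remaining finite sum (both in $\pi$ and $\nu$) is handled by \cite[Lemma~13.1]{MM1}, and this is precisely the step that requires $M$ to be a \emph{proper} Levi, yielding a uniform bound $e^{-ct}$ for some $c>0$.

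The main obstacle is not any single estimate but the combination of three independent technical inputs: the uniform local bound of Proposition~\ref{log-der}, which itself rested on the Bernstein-type inequality of \cite[Corollary~5.18]{FiLaMu} and a careful analysis of the number of poles and zeros of $R_{\overline{P}|P}(\pi_v,s)$; the global normalization estimate \eqref{log-deriv}; and the positivity statement from \cite[Lemma~13.2]{MM1} which fails without the hypothesis $\tau\not\cong\tau_\theta$. The properness of $M$ is crucial at the very end, since for $M=\bG$ one encounters the discrete spectrum at $\lambda=0$ and no such exponential decay can hold in general.
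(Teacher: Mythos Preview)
Your proposal is correct and follows essentially the same route as the paper: the lemma is stated as the summary of exactly the chain of estimates you describe, running from \eqref{specside3} through \eqref{est-specside1} and then splitting the $\pi$-sum into $\Sigma_1(t)$ and $\Sigma_2(t)$ using \cite[Lemmas~13.1, 13.2]{MM1} and \eqref{estim10}. The only minor discrepancy is that in the paper the hypothesis $\tau\not\cong\tau_\theta$ enters via \cite[Lemma~13.1]{MM1} (for the finite sum $\Sigma_1$) rather than via \cite[Lemma~13.2]{MM1}, but this does not affect the validity of your outline.
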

It remains to deal with the case $M=\bG$. This has been done already in 
\cite{MM1} for an arbitrary reductive group $\bG$. Using \cite[(13.34)]{MM1}
and the considerations following this equality, we get
\begin{equation}
J_{\spec,G}(\phi^{\tau.p}_t)=O(e^{-ct})
\end{equation}
for some $c>0$. 
Combined with Lemma~\ref{lem-trspec} we obtain 
\begin{prop}\label{asympinf}
There exists $c>0$ such that 
\[
J_{\spec}(\phi_t^{\tau,p})=O\left(e^{-ct}\right)
\]
for all $t\ge 1$ and $p=0,\dots,n$.
\end{prop}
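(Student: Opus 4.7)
The plan is to combine the spectral decomposition from Theorem~\ref{thm-specexpand} with the exponential estimate already obtained for proper Levi subgroups in Lemma~\ref{lem-trspec}, and then to treat the remaining term associated with $M=\bG$ separately. By Theorem~\ref{thm-specexpand} we have
\[
 J_{\spec}(\phi_t^{\tau,p}) = \sum_{[M]} J_{\spec,M}(\phi_t^{\tau,p}),
\]
where $[M]$ runs over the finitely many conjugacy classes of Levi subgroups of $\bG$. Since the sum is finite, it suffices to bound each summand individually by $O(e^{-ct})$ for a uniform $c>0$, and then take the minimum of the resulting constants.

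For every proper Levi subgroup $M\subsetneq \bG$, Lemma~\ref{lem-trspec} already provides the desired bound $J_{\spec,M}(\phi_t^{\tau,p}) = O(e^{-ct})$ for some $c>0$ as $t\to\infty$. So the only remaining task is to handle the term $J_{\spec,\bG}(\phi_t^{\tau,p})$. By the formula \eqref{specside2} specialized to $M=\bG$, this distribution reduces to $\Tr R_{\di}(\phi_t^{\tau,p})$, where $R_{\di}$ is the restriction of the regular representation of $\bG(\A)^1$ on the discrete subspace of $L^2(\bG(\Q)\bs \bG(\A)^1)$. The point is then to relate this trace back to the spectrum of the Hodge-Laplacian $\Delta_p(\tau)$ acting on the finite-volume quotient $X(K_f)$ via the construction of $\phi_t^{\tau,p}$ in \eqref{extension5} together with the kernel identity \eqref{equ-kernel}.

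The plan for the $M=\bG$ contribution is to invoke the analysis already carried out in \cite[(13.34)]{MM1}, which expresses $\Tr R_{\di}(\phi_t^{\tau,p})$ as the trace of the heat operator $e^{-t\Delta_p(\tau)}$ restricted to the discrete part of the spectrum. The crucial input is the hypothesis $\tau\not\cong\tau_\theta$: by the standard argument in the compact case (cf.\ \cite[Corollary~7.7]{BV} and \cite[Lemma~3.1]{MM}) this rules out harmonic forms, i.e.\ $\ker\Delta_p(\tau) = 0$, so that the discrete spectrum is bounded below by a positive constant $c_0>0$. Combined with the exponential decay factor $e^{-t(\tau(\Omega)-c(\pi_\infty))}$ coming from \eqref{equ-kernel2} and \eqref{casimir5}, and the same Weyl-type dimension bound \eqref{estim10} applied at the level of $\bG$, this produces
\[
J_{\spec,\bG}(\phi_t^{\tau,p}) \ll e^{-c_0 t/2}
\]
for $t\ge 1$, exactly as in the cited calculation in \cite{MM1}.

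The main obstacle is verifying that the argument of \cite[(13.34)]{MM1} carries over verbatim to our more general quasi-split reductive $\bG$. The only ingredients used there are (i) the representation-theoretic identity \eqref{TrFT}, which holds in general; (ii) Weyl's law for the discrete spectrum of $M(\A)=\bG(\A)$ in the form of \eqref{estim10}, which is available in our setting by \cite{Mu3}; and (iii) the vanishing $\ker\Delta_p(\tau)=0$ under $\tau\not\cong\tau_\theta$. Since all three ingredients hold without change for admissible reductive groups, the argument transfers without modification, and Proposition~\ref{asympinf} follows by combining the two cases.
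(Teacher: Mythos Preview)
Your proposal is correct and follows essentially the same approach as the paper: decompose $J_{\spec}$ via Theorem~\ref{thm-specexpand}, apply Lemma~\ref{lem-trspec} for proper Levi subgroups, and handle $M=\bG$ by invoking \cite[(13.34)]{MM1} together with the vanishing of the kernel forced by $\tau\not\cong\tau_\theta$. One minor remark: in your closing sentence you restrict to ``admissible reductive groups,'' but Proposition~\ref{asympinf} is stated and proved for the general quasi-split $\bG$ considered throughout Section~\ref{sec-analtor}; the admissibility hypothesis enters only later in Section~\ref{sec-towers}.
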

Applying the trace formula \eqref{tracef1}, we get
\[
\Tr_{\reg}\left(e^{-t\Delta_p(\tau)}\right)=O(e^{-ct}),\quad\text{as}\;
 t\to\infty,
\]
which is the proof of Theorem~\ref{prop-lt}. The asymptotic expansion as
$t\to +0$ is provided by Lemma \ref{lem:asymp-exp}.

Thus the corresponding zeta function $\zeta_p(s;\tau)$, defined by the Mellin 
transform 
\begin{equation}\label{zetafct}
\zeta_p(s;\tau):=\frac{1}{\Gamma(s)}\int_0^\infty 
\Tr_{\reg}\left(e^{-t\Delta_p(\tau)}\right) t^{s-1}\; dt.
\end{equation}
is holomorphic in the half-plane $\Re(s)>d/2$ and admits a meromorphic
extension to the whole complex plane. It may have a pole at $s=0$.  Let $f(s)$ 
be a meromorphic function on $\C$. For $s_0\in\C$ let 
\[
f(s)=\sum_{k\ge k_0}a_k(s-s_0)^k
\]
be the Laurent expansion of $f$ at $s_0$. Put $\FP_{s=s_0}:=a_0$. Now we define
the analytic torsion $T_X(\tau)\in\R^+$ by
\begin{equation}\label{analtor3}
\log T_X(\tau)=\frac{1}{2}\sum_{p=0}^d (-1)^p p 
\left(\FP_{s=0}\frac{\zeta_p(s;\tau)}{s}\right).
\end{equation}
\begin{remark}
In the case of $\bG=\GL(3)$ we have determined the coefficients of the 
log-terms in \cite{MM1}. The calculation shows that the 
zeta functions definitely have a pole at $s=0$. However, the combination
$\sum_{p=1}^5 (-1)^p p \zeta_p(s;\tau)$ turns out to be holomorphic at $s=0$ and
we can instead define the logarithm of the analytic torsion by
\[
\log T_{X(K_f)}(\tau)=\frac{d}{ds}
\left(\frac{1}{2}\sum_{p=1}^5 (-1)^p p \zeta_p(s;\tau)\right)\bigg|_{s=0}.
\]
\end{remark}
Put
\begin{equation}\label{altheat}
K(t,\tau):=\sum_{p=1}^d (-1)^p p \Tr_{\reg}\left(e^{-t\Delta_p(\tau)}\right).
\end{equation}
Then $K(t,\tau)=O(e^{-ct})$ as $t\to\infty$ and the Mellin transform
\[
\int_0^\infty K(t,\tau)t^{s-1} dt
\]
converges absolutely and uniformly on compact subsets of $\Re(s)>d/2$ and 
admits a meromorphic extension to $\C$. Moreover, by \eqref{analtor3} we
have
\begin{equation}\label{analtor1}
\log T_X(\tau)=\FP_{s=0}\left(\frac{1}{\Gamma(s)}\int_0^\infty
K(t,\tau)t^{s-1} dt\right).
\end{equation}
Let $\delta(\widetilde X):=\rk_\C(G(\R)^1)-\rk_\C({\bf K}_\infty)$. Let 
$\Gamma\subset G(\R)^1$ be a torsion free, co-compact lattice. Let $X=
\Gamma\bs \widetilde X$. If $\dim X$ is even or $\delta(\widetilde X)\ge 2$,
 then $T_X(\tau)=1$ for every $\tau\in\Rep(G(\R)^1)$ \cite[Prop. 4.2]{MP2}.
We note that the proof of \cite[Prop. 4.2]{MP2} for the case 
$\delta(\widetilde X)\ge 2$ contains a mistake, which is the claim that the
Grothendieck group of admissible representations is generated by induced
representations. This is only true for $\GL(n,\R)$.
However, the proof can be fixed by proceeding as in the proof
of \cite[Corollary 2.2]{MoS}. 
As the example in \cite{MP1} shows,  in the non-compact case the analytic
torsion need not be trivial if $\dim X$ is even. Hence one may guess that
this is also true if $\delta(\widetilde X)\ge 2$.
We consider here the contribution of the discrete spectrum to the analytic 
torsion and study when it vanishes. Let
\[
\phi_t^\tau:=\sum_{p=1}^d (-1)^p p \phi_t^{\tau,p} \quad\text{and}\quad 
k_t^\tau:=\sum_{p=1}^d (-1)^p p h_t^{\tau,p}.
\]
Then by \eqref{regtr-heat} we have
\begin{equation}\label{alt-reg-tr}
K(t,\tau)=J_{\spec}(\phi_t^\tau).
\end{equation}
For $\pi\in\Pi(G(\R))$ let $\Theta_\pi$ be the global character. Then the
contribution of the discrete spectrum is given by
\begin{equation}\label{alt-reg-tr1}
J_{\spec,G}(\phi^\tau_t)=\sum_{\pi\in\Pi_{\di}(G(\A))} m(\pi)\dim\left(\cH_{\pi_f}^{K_f}
\right)\Theta_{\pi_\infty}(k_t^\tau).
\end{equation}
If $\dim X$ is even, we can follow the proof of \cite[Prop. 4.2]{MP2} to
show that $J_{\spec,G}(\phi^\tau_t)=0$.  
We conjecture that $J_{\spec,G}(\phi^\tau_t)=0$, if $\delta(\widetilde X)\ge 2$. 
In \cite[Lemma 13.5]{MM1} it was shown that this holds for $G=\GL(n)$, i.e.,
$J_{\spec,\GL(n)}(\phi^\tau_t)=0$, if $n\ge 5$.

\section{Approximation of $L^2$-analytic torsion}\label{sec-towers}
\setcounter{equation}{0}

In this section we prove the Theorem \ref{theo-approx}. To begin with we
introduce some notation.

Let $\bG$ be a reductive algebraic group over $\Q$. Throughout this section
we assume that 
$\bG$ satisfies two conditons, called (TWN) and (BD), which were introduced in  
\cite[Definition 5.2, Definition 5.9]{FiLaMu}. These conditions imply
appropriate estimations for the logarithmic derivatives of the intertwining
operators, which occur on the spectral side of the trace formula. Property
(TWN) is a global condition concerning the scalar-valued normalizing factors of 
the intertwining operators, while (BD) is a condition for the local 
intertwining operators. For (BD) we need additional assumptions introduced in
\cite[Definition 2]{FL4}. Let $S$ be a set of primes. We say that $G$ satisfies
property (BD) for $S$, if the local groups $G(\Q_p)$, $p\in S$, satisfy (BD)
with a uniform value of $C$ (see \cite{FL4} for details).

For families of open compact subgroups of $\bG(\A_f)$ we will need a certain
non-degeneracy condition introduced in \cite[Definition 1.3]{FL3} which we 
recall next.
For any reductive groups $H$ let $H(\A)^+$ be the 
image of the map $H^{\sico}\to H(\A)$, where $H^{\sico}$ is the simply connected
cover of the derived group of $H$. Define $H(\A_f)^+$ analogously. 
A family ${\mathcal K}$ of compact open subgroups of $\bG(\A_f)$ is called
{\it non-degenerate}, if for any $\Q$-simple normal subgroup $H$ of $\bG$ we 
have $\vol_{H(\A_f)^+}(K\cap H(\A_f)^+)\to 0$, $K\in{\mathcal K}$. 

For an open compact subgroup $K_M\subset M(\A_f)$ let 
$\mu^M_{K_M}$ be the measue on $\Pi(M(\R)^1)$ defined by
\begin{equation}\label{measure2}
\begin{split}
\mu^M_{K_M}=&\frac{\vol(K_{M})}{\vol(M(\Q)\bs M(\A)^1)}\\
&\hskip20pt\cdot\sum_{\pi\in\Pi(M(\A)^1)}\dim\Hom_{M(\A)^1}(\pi,L^2(M(\Q)\bs M(\A)^1))
\dim \pi_f^{K_{M}}\delta_{\pi_\infty}.
\end{split}
\end{equation}
In the notation of \cite{FiLaMu} this is the measure $\mu^{M,S_\infty}_{K_M}$,
where $S_\infty=\{\infty\}$. We will consider collection of measures on
$\Pi(M(\R)^1)$ which are {\it polynomially bounded}. For the definition we
refer to \cite[Sect. 4.1]{FL3}.

Let $K\subset \bG(\A_f)$ be a compact open subgroup and let $X(K)$ be the
adelic quotient \eqref{adelic-quot}. Let $\tau\in\Rep(G(\R)^1)$ be irreducible
and assume that $\tau$ satisfies $\tau\not\cong\tau\circ\theta$, where $\theta$
is the Cartan involution. Let $E_{\tau,K}\to X(K)$ be the flat vector bundle
over $X(K)$ associated to $\tau$. Let $\Delta_{p,K}(\tau)$ be the Laplace
operator acting in the space of $E_{\tau;K}$-valued $p$-forms on $X(K)$. Let
$T_{X(K)}(\tau)$ be the regularized analytic torsion. 

Recall the definition of $\cT$ from \ref{def-F}. Let $\Co(G(\A)^1)_\cT$ be the subspace of $\Co(G(\A)^1)$ consisting of all
$f\in\Co(G(\A)^1)$ whose $\bK_\infty\times\bK_\infty$-types are contained in
$\cT\times\cT$.

To establish Theorem \ref{theo-approx}, we follow the approach of \cite{MM2}.
Recall that the definition of  $T_{X(K)}(\tau)$ by \eqref{analtor1} and
\eqref{altheat}. This reduces the problem to the consideration of the Mellin
transform of the regularized trace of the heat operators. We write
\begin{equation}\label{splitt-at}
\begin{split}
\int_0^\infty \Tr_{\reg}\left(e^{-t\Delta_{p,K}(\tau)}\right)t^{s-1}dt 
&=\int_0^T \Tr_{\reg}\left(e^{-t\Delta_{p,K}(\tau)}\right)t^{s-1} dt\\
&+\int_T^\infty \Tr_{\reg}\left(e^{-t\Delta_{p,K}(\tau)}\right)t^{s-1}dt.
\end{split}
\end{equation}
To begin we estimate the second integral on the right hand side.
The first auxiliary result is the following proposition.
\begin{prop}\label{prop-lt1}
Let $\tau\in\Rep(G(\R)^1)$ be irreducible.
Assume that $\tau\not\cong\tau_\theta$. Let ${\bf K}_0\subset\bG(\A_f)$ be a 
compact open subgroup. Then there exist $C,c>0$ such that
\begin{equation}
\frac{1}{\vol(X(K))}\big|\Tr_{\reg}\left(e^{-t\Delta_{p,K}(\tau)}\right)\big|\le
C e^{-ct}
\end{equation}
for all $t\ge 1$, $p=0,...,d$, and open subgroups $K\subset {\bf K}_0$.
\end{prop}
\begin{proof}
Let $h_t^{\tau,p}$ be defined by \eqref{tr-kern}. Then by 
\eqref{regtrace1a}, the definition of $\phi_t^{\tau,p}$ by \eqref{extension5}
and Theorem \ref{thm-specexpand} we have
\begin{equation}\label{reg-trace-heat}
\Tr_{\reg}\left(e^{-t\Delta_{p,K}(\tau)}\right)=\sum_{[M]}
J_{\spec,M}(h_t^{\tau,p}\otimes\chi_K),
\end{equation}
where $[M]$ runs over the conjugacy classes of Levi subgroups of $\bG$. 
Let $M$ be a proper standard Levi 
subgroup, $P\in\cP(M)$, 
and let $\level(K,G_M^+)$ be defined by \cite[(3.1)]{FL3}. Let $l\in\N$ and $\varepsilon>0$.
By \cite[Lemma 6.3]{MM2} there exist $C,c>0$ such that
\begin{equation}\label{est-spec-M}
|J_{\spec,M}(h_t^{\tau,p}\otimes\chi_K)|\le C e^{-ct} \sum_{\pi\in\Pi_{\di}(M(\A))}
\sum_{\nu\in\cT}\dim\left(\cA^2_\pi(P)^{K,\nu}\right)(1+|\lambda_{\pi_\infty}|)^{-l}
\level(K,G_M^+)^\varepsilon
\end{equation}
for all $t\ge 1$, $p=0,...,d$, and open compact subgroups $K\subset \bG(\A_f)$. 
To estimate the double series we proceed as in the proof of Propositon 4.4
in \cite{FL3}. First we need to introduce some additional notation.
Let  $\cT_M\subset \Pi(\K_{M,\infty})$ be the finite set of all 
irreducible components of the restriction of elements of $\cT$ to 
$\bK_{M,\infty}$. By
Frobenius reciprocity only those $\pi\in\Pi_{\di}(M(\A))$ such that $\pi_\infty$
contains a $\bK_{M,\infty}$-type in $\cT_M$ can contribute to the right-hand
side of \eqref{est-spec-M}. Denote by $\Pi_{\di}(M(\A))^{\cT_M}$ the corresponding
subset of $\Pi_{\di}(M(\A))$. 
Now consider the dimension of the spaces of automorphic forms
appearing on the right hand side of \eqref{est-spec-M}. We have
\[
\begin{split}
\dim\cA^2_\pi(P)^{K,\nu}&=m_\pi\dim\Ind_{P(\A)}^{G(\A)}(\pi)^{K,\nu}\\
&=m_\pi\dim\left(\Ind_{P(\R)}^{\bG(\R)}(\pi_\infty)\right)^\nu 
\dim\left(\Ind_{P(\A_f)}^{\bG(\A_f)}(\pi_f)\right)^K,
\end{split}
\]
where
\[
m_\pi=\dim\Hom(\pi, L^2_{\di}(A_M M(\Q)\bs M(\A))).
\]
Now observe that $\dim\left(\Ind_{P(\R)}^{\bG(\R)}(\pi_\infty)\right)^\nu \le
(\dim \nu)^2$. Denote by $\proj_M\colon P(\A_f)\to M(\A_f)$ the canonical 
projection. For 
$\gamma\in\bG(\A_f)$ and an open compact subgroup $K\subset\bG(\A_f)$ let
\[
K_M^\gamma:=\proj_M(P(\A_f)\cap\gamma\ K\gamma^{-1}).
\]
Then we have
\[
 \dim\left(\Ind_{P(\A_f)}^{\bG(\A_f)}(\pi_f)\right)^K
=\sum_{\gamma\in P(\A_f)\bs G(\A_f)/K} \dim \pi_f^{K_M^\gamma}.
\]
Summarizing we get
\begin{equation}\label{est-spec-M1}
\begin{split}
J_{\spec,M}(h_t^{\tau,p}\otimes\chi_K)\ll_{\cT,\varepsilon} &e^{-ct}
\level(K,G_M^+)^\varepsilon\\
&\cdot\sum_{\gamma\in P(\A_f)\bs G(\A_f)/K}\sum_{\pi\in\Pi_{\di}(M(\A))^{\cT_M}}
(1+|\lambda_{\pi_\infty}|)^{-l} m_\pi\dim\pi_f^{K_M^\gamma}.
\end{split}
\end{equation}
According to the proof of \cite[Corollary 4.9]{FL3}
the set of measures 
$\{\mu_{K^\gamma_M}^M\}$, where $K$ ranges
over the open subgroups of ${\bf K}_0$ and $\gamma\in\bG(\A_f)$, is 
polynomially bounded. Using \cite[(4.1)]{FL3}, it follows there exists 
$N\in\N$, depending only on $\cT_M$, such that
\begin{equation}\label{est-spec-M2}
\vol_M(K^\gamma_M)\sum_{\pi\in\Pi_{\di}(M(\A))^{\cT_M}} (1+|\lambda_{\pi_\infty}|)^{-N}
m_\pi \dim \pi_f^{K_M^\gamma}\ll_{\cT_M}1
\end{equation}
for all $K\subset \bK_0$ and $\gamma\in\bG(\A_f)$. For any $f\in C_c(\bG(\A_f))$
define ${\mathcal O}{\mathcal I}_P(f)$ by
\begin{equation}\label{def-const-P}
{\mathcal O}{\mathcal I}_P(f):=\int_{P(\A_f)\bs\bG(\A_f)}\int_{N(\A_f)}
f(g^{-1}ug)du dg=\int_{\bK_f}\int_{N(\A_f)} f(k^{-1}uk)du dk.
\end{equation}
Put
\begin{equation}\label{def-const-P1}
{\mathcal O}{\mathcal I}_{P,K}={\mathcal O}{\mathcal I}_P(\1_K).
\end{equation}
Using \cite[Lemma 4.3]{FL3}, it follows from \eqref{est-spec-M2} that
\begin{equation}\label{est-spec-M3}
\vol(K)\sum_{\gamma\in P(\A_f)\bs\bG(\A_f)/K}\sum_{\pi\in\Pi_{\di}(M(\A))^{\cT_M}} 
(1+|\lambda_{\pi_\infty}|)^{-N} m_\pi \dim \pi_f^{K_M^\gamma}
\ll_{\cT_M}{\mathcal O}{\mathcal I}_{P,K}.
\end{equation}
Now observe that 
\[
\vol(X(K))= \frac{\vol(G(\Q)\bs\G(\A)^1)}{\vol(K)}.
\]
Note that $\varepsilon>0$ in \eqref{est-spec-M1} can be chosen arbitrarily
small. Then
by \cite[Lemma 4.6]{FL3} and \eqref{est-spec-M1} it follows that for every
proper standard Levi subgroup $M$ there exist $C,c,\delta>0$ such that
\begin{equation}\label{est-spec-M4}
\frac{1}{\vol(X(K))} |J_{\spec,M}(h_t^{\tau,p}\otimes\chi_K)|\le C e^{-ct}
\level(K,G_M^+)^{-\delta}
\end{equation}
for all open compact subgroups $K\subset \bK_0$, $t\ge 1$, and $p=0,...,d$.
The remaining case is $M=\bG$. Then 
\begin{equation}
\begin{split}
J_{\spec,\bG}(h_t^{\tau,p}\otimes\chi_K)&=\sum_{\pi\in\Pi_{\di}(G(\A)^1)} m_\pi
\Tr\pi(h_t^{\tau,p}\otimes\chi_K)\\
&=\sum_{\pi\in\Pi_{\di}(G(\A)^1)} m_\pi 
\dim(\pi_f^K)\Tr\pi_\infty(h_t^{\tau,p}).
\end{split}
\end{equation}
Now observe that by \cite[(4.18), (4.19)]{MP2} we have
\[
\Tr\pi_\infty(h_t^{\tau,p})=e^{t(\pi_\infty(\Omega)-\tau(\Omega))}
\dim(\cH_{\pi_\infty}\otimes\Lambda^p\pf^\ast\otimes V_\tau)^{K_\infty},
\]
where $\Omega$ is the Casimir operator of $\bG(\R)^1)$. Furthermore, for
$\nu\in\Pi(K_\infty)$ we have $[\pi_\infty|_{K_\infty}\colon\nu]\le \dim(\nu)$
\cite[Theorem 8.1]{Kn}. Thus there exists $C>0$ such that
\[
\frac{1}{\vol(X(K))}|J_{\spec,\bG}(h_t^{\tau,p}\otimes\chi_K)|\le C\vol(K)
\sum_{\pi\in\Pi_{\di}(G(\A)^1)^{\cT}} m_\pi \dim(\pi_f^K)
e^{t(\pi_\infty(\Omega)-\tau(\Omega))}
\]
for all $t>0$ and open subgroups $K\subset \bK_0$. Note that in our
notation  $\lambda_{\pi_\infty}=\pi_\infty(\Omega)$. Arguing as in the proof
of Lemma 6.3 in \cite{MM2}, it follows that there exists $c>0$ and for all
$l\in\N$ there exists $C_l>0$ such that
\[
\begin{split}
\frac{1}{\vol(X(K))}|J_{\spec,\bG}(h_t^{\tau,p}\otimes\chi_K)|\le &C_l \vol(K) 
e^{-ct}\\
&\times \sum_{\pi\in\Pi_{\di}(G(\A)^1)} m_\pi(1+|\lambda_{\pi_\infty}|)^{-l}
\dim(\pi_f^K)
\end{split}
\]
for all $t\ge 1$ and open subgroups $K\subset\bK_0$. Now observe that 
\eqref{est-spec-M2} holds also for $M=\bG$, which implies that there exist
$C,c>0$ such that
\begin{equation}\label{est-spec-G}
\frac{1}{\vol(X(K))}|J_{\spec,\bG}(h_t^{\tau,p}\otimes\chi_K)|\le C e^{-ct}
\end{equation}
for all $t\ge 1$, $p=0,...,d$, and open subgroups $K\subset\bK_0$.
Combining \eqref{reg-trace-heat}, \eqref{est-spec-M4} and \eqref{est-spec-G},
the proof is completed.
\end{proof} 

Using Proposition \ref{prop-lt1} it follows that there exist $C,c>0$ such
that
\begin{equation}\label{est-larget3}
\frac{1}{\vol(X(K))}\left|\int_T^\infty 
\Tr_{\reg}\left(e^{-t\Delta_{p,K}(\tau)}\right)t^{-1}dt\right|\le C e^{-cT}
\end{equation}
for all $T\ge 1$, $p=0,\dots,d$, and open subgroups $K\subset\bK_0$.

Now we turn to the estimation of the first integral on the right hand side of
\eqref{splitt-at}. In order to study the short time behavior of the 
regularized trace of the heat operator with the help of the trace formula,
we first need to show that we can replace $h_t^{\tau,p}$ by an appropriate 
compactly
supported test function without changing the asymptotic behavior as $t\to 0$.
Let $r(g)$ be the function on $G_\infty$, which is defined by \eqref{dist-fct}. 
For $R>0$ let 
\[
B_R:=\{ g\in G_\infty\colon r(g)< R\}.
\]
We need the following auxiliary lemma.
\begin{lem}\label{lem-est5}
There exist $C,c>0$ such that
\[
\int_{G_\infty} e^{-r^2(g)/t}dg\le C e^{ct}
\]
for all $t>0$. 
\end{lem}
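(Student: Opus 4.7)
The plan is to reduce the integral to an integral over the positive Weyl chamber via the Cartan decomposition, bound the Jacobian by an exponential in $\|H\|$, and then evaluate the resulting Gaussian by completing the square.

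More precisely, first I would write $G_\infty = K_\infty \overline{A^+} K_\infty$ with $\overline{A^+} = \exp(\overline{\af^+})$ and use the integration formula
\[
\int_{G_\infty} f(g)\, dg = \vol(K_\infty)^2 \int_{\overline{\af^+}} f(e^H) J(H)\, dH,\qquad J(H)=\prod_{\alpha\in\Sigma^+}\sinh(\alpha(H))^{m_\alpha},
\]
valid for bi-$K_\infty$-invariant $f$. Since the function $g \mapsto r(g)$ is bi-$K_\infty$-invariant and satisfies $r(e^H) = \|H\|$ for $H\in\overline{\af^+}$ (by \cite[(4.6.25)]{GaVa}, as recalled above), the integral in question equals $\vol(K_\infty)^2\int_{\overline{\af^+}} e^{-\|H\|^2/t} J(H)\, dH$.

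Next, one has the elementary estimate $J(H) \le C_0 e^{2\rho(H)}$ for $H\in\overline{\af^+}$, where $2\rho=\sum_{\alpha\in\Sigma^+} m_\alpha \alpha$. Since $2\rho\in(\af^G)^*$, there exists $c_0>0$ such that $2\rho(H)\le c_0\|H\|$ for all $H\in\af^G$. Therefore
\[
\int_{\overline{\af^+}} e^{-\|H\|^2/t} J(H)\, dH
\le C_0 \int_{\af^G} e^{-\|H\|^2/t + c_0\|H\|}\, dH.
\]
Completing the square gives $-\|H\|^2/t + c_0\|H\| = -(\|H\|-c_0 t/2)^2/t + c_0^2 t/4$, so switching to polar coordinates on $\af^G$ and performing the change of variables $u=(\rho-c_0 t/2)/\sqrt{t}$ bounds the integral by a constant multiple of $e^{c_0^2 t/4}\, P(t)$, where $P(t)$ is a polynomial in $t^{1/2}$ of degree at most $r = \dim\af^G$.

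Finally, $P(t)$ can be absorbed into the exponential: for $t\ge 1$ we have $P(t)\le C_1 e^{t}$, while for $0<t<1$ the original integral is dominated by $\int_{G_\infty} e^{-r(g)^2}\,dg$, which is finite by the same Cartan decomposition argument (the Gaussian kills the exponential volume growth). Combining these two regimes gives the desired bound $C e^{ct}$ with $c=c_0^2/4+1$ and a suitable $C>0$. The only non-routine ingredient is the exponential bound on the Cartan Jacobian, which is standard.
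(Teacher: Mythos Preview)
Your proof is correct and is essentially what the paper does, only packaged differently. The paper's own argument simply invokes the $\GL_n$ case from \cite[Lemma~7.1]{MM2} and then observes that, under the embedding $G_\infty\hookrightarrow\GL_n(\R)^1$ fixed in Section~\ref{sec:test:fcts}, the distance functions $r$ and $r_n$ coincide on $G_\infty$, so the $\GL_n$ argument transfers verbatim. That argument is exactly the Cartan decomposition computation you carry out directly: reduce to the closed positive chamber, use $r(e^H)=\|H\|$, bound the Jacobian by $C_0 e^{2\rho(H)}\le C_0 e^{c_0\|H\|}$, and complete the square. Your direct route is self-contained and avoids the detour through the $\GL_n$ embedding; the paper's route is shorter on the page but hides the actual estimate in the citation. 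A minor quibble: the degree of your polynomial $P(t)$ in $t^{1/2}$ is a bit larger than $r$ (the factor $t^{r-1}$ from $(c_0 t/2)^{r-1}$ and the extra $\sqrt{t}$ from the substitution give order $t^{r-1/2}$), but this is irrelevant since any polynomial is absorbed into the exponential.
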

\begin{proof}
For $G=\GL_n$ this was proved in \cite[Lemma 7.1]{MM2}. Let $d_n(\cdot,\cdot)$
denote the geodesic distance function on $GL_n(\R)^1/\rO(n)$, and let $r_n(g)=
d_n(g\rO(n),\rO(n))$. At the beginning of section \ref{sec:test:fcts} we have 
shown
that with respect to our choice of the embedding $G_\infty\subset \GL_n(\R)^1$,
$r$ and $r_n$ coincide on $G_\infty$. Thus the lemma follows from the case of
$\GL_n$.
\end{proof} 
Let 
$f\in C^\infty(\R)$ such that $f(u)=1$, if $u\le 1/2$, and $f(u)=0$, if
$u\ge 1$. Let $\varphi_R\in C^\infty_c(G_\infty)$ be defined by
\begin{equation}\label{phiR}
\varphi_R(g):= f\left(\frac{r(g)}{R}\right).
\end{equation}
Then we have $\supp\varphi_R\subset B_R$. Extend $\varphi_R$ to $G(\R)$ by
\[
\varphi_R(g_\infty z)=\varphi_R(g_\infty),\quad g_\infty\in G_\infty,\; z\in 
A_G(\R)^0.
\]
Define $\widetilde h^{\tau,p}_{t,R}\in C^\infty(G(\R))$ by
\begin{equation}\label{test-funct1}
\widetilde h^{\tau,p}_{t,R}(g_\infty):= \varphi_R(g_\infty) h^{\tau,p}_t(g_\infty),
\quad g_\infty\in G(\R).
\end{equation}
Then the restriction of 
$\widetilde h_{t,R}^{\tau,p}\otimes\chi_{K}$ to $G(\A)^1$ belongs to 
$C^\infty_c(G(\A)^1)$. 
\begin{prop}\label{prop-cptsupp}
There exist constants $C_1,C_2,C_3>0$ such that
\[
\frac{1}{\vol(X(K))}\big|J_{\spec}(h^{\tau,p}_t\otimes\chi_{K})- 
J_{\spec}(\widetilde h^{\tau,p}_{t,R}\otimes\chi_{K})\big|\le C_1 e^{-C_2R^2/t+C_3t}
\]
for all open subgroups $K\subset\bK_0$, $p=0,\dots,d$, $t>0$ and $R\ge 1$.
\end{prop}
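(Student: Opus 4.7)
The first step is to apply the trace formula \eqref{tracef1} and thereby replace the spectral side by the geometric side: since both $\phi_t^{\tau,p}$ and $\widetilde\phi_{t,R}^{\tau,p}$ lie in $\Co(G(\A)^1;K(N))$ (to which $J_{\geo}$ and $J_{\spec}$ extend continuously, by \cite[Theorem~7.1]{FL1} and \cite[Corollary~1]{FLM1}), it suffices to bound
\[
|J_{\geo}(f_t\otimes\chi_{K(N)})|,\qquad f_t:=h_t^{\tau,p}-\widetilde h_{t,R}^{\tau,p},
\]
for every $N$, $t$, $R$ under consideration. The function $f_t$ vanishes on $B_{R/2}$ because $\varphi_R\equiv 1$ there, so $f_t$ is supported in $\{g\in G_\infty:r(g)\ge R/2\}$.

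The second step is a pointwise estimate on $f_t$ and its convolution derivatives. Decomposing $h_t^{\tau,p}$ via \eqref{equ-kernel2} as a finite linear combination of Bochner heat kernels $h_t^\nu$ with coefficients of size $O(e^{c_0 t})$ and applying the uniform bound \eqref{eq:uniform:upper:heat}, one obtains
\[
|f_t(g)|\le C\, e^{c_0 t}\, t^{-d/2}\, e^{-r(g)^2/(4t)}\cdot \mathbf 1_{\{r(g)\ge R/2\}}.
\]
For any such $g$ the trivial splitting $e^{-r(g)^2/(4t)}\le e^{-R^2/(32t)}\,e^{-r(g)^2/(8t)}$ together with Lemma~\ref{lem-est5} yields $\|f_t\|_{L^1(G_\infty)}\le C\,e^{C_3 t}\,e^{-R^2/(32t)}$. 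Standard Gaussian bounds for derivatives of the heat semigroup (the loss $t^{-k}$ being absorbed into $e^{C_3 t}$) extend this to
\[
\|f_t*X\|_{L^1(G_\infty)}\le C_X\, e^{C_3 t}\,e^{-R^2/(32t)},\qquad X\in\mathcal U(\gf_\infty^1).
\]

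The third and main step is to turn these seminorm bounds on $f_t$ into a bound on $J_{\geo}(f_t\otimes\chi_{K(N)})$ that is proportional to $\vol(Y(N))$. One invokes the continuity of $J_{\geo}$ on $\Co(G(\A)^1;K(N))$ from \cite[Theorem~7.1]{FL1}: there exist a finite family $X_1,\dots,X_m\in\mathcal U(\gf_\infty^1)$, independent of $t,R,N$, and a constant such that $|J_{\geo}(\phi)|$ is controlled by $\sum_i\|\phi*X_i\|_{L^1(G(\A)^1)}$. Under tensor factorization $\phi=f_t\otimes\chi_{K(N)}$ the $L^1$-norm factorizes, and the resulting constant's dependence on the compact open subgroup is linear in $\vol(Y(N))$: this comes from the fact that in the fine geometric expansion (Proposition~\ref{prop:fine:expansion}) only the identity term carries a covolume factor, all remaining global coefficients $a^M([u]_S,S)$ and local weighted orbital integrals against $\chi_{K(N)}$ being controlled uniformly by the estimates available for admissible $G$ (cf.~\cite{FL2,FL3}). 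Combining these ingredients gives
\[
|J_{\geo}(f_t\otimes\chi_{K(N)})|\le C_1\,\vol(Y(N))\, e^{C_3 t}\, e^{-C_2 R^2/t},
\]
which, after division by $\vol(Y(N))$, is the asserted estimate. The principal technical obstacle is precisely the uniformity of the geometric-side continuity constant in the level $N$; once it is isolated as linear in the volume, the rest of the proof is a routine heat-kernel computation.
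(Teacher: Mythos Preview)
Your approach diverges from the paper's in a crucial way and contains a genuine gap. You pass to the geometric side and try to argue that the continuity constant of $J_{\geo}$ on $\Co(G(\A)^1;K(N))$ is linear in $\vol(Y(N))$, claiming that ``only the identity term carries a covolume factor'' and that the remaining global coefficients $a^M([u]_S,S)$ and local weighted orbital integrals against $\chi_{K(N)}$ are ``controlled uniformly by the estimates available for admissible $G$ (cf.~\cite{FL2,FL3})''. This is exactly what is \emph{not} known: the references \cite{FL2,FL3} concern the spectral side (normalizing factors and local intertwining operators), not the geometric coefficients, and the paper states explicitly in the introduction that ``for groups other than $\GL(n)$ very little is known about these coefficients''. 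The set $S$ in the fine expansion must contain all primes dividing $N$, so both $a^M([u]_S,S)$ and the $p$-adic weighted orbital integrals vary with $N$ in a way you have not controlled. Moreover, $f_t=(1-\varphi_R)h_t^{\tau,p}$ is not compactly supported, so Proposition~\ref{prop:fine:expansion} does not apply directly.

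The paper instead works entirely on the \emph{spectral} side. Writing $\psi_R=1-\varphi_R$, one has to bound $J_{\spec}(\psi_R h_t^{\tau,p}\otimes\chi_{K(N)})$, and this is done term by term in the refined spectral expansion \eqref{specside1}--\eqref{specside2}. The key input is that for admissible $G$ the collection of measures $\{\mu^M_{K_M(N)}\}_{N\in\N}$ is polynomially bounded (by \cite[Prop.~4.7]{FL3} together with Proposition~\ref{prop-twn-bd}); then \cite[Corollary~7.4]{FiLaMu} yields, for each proper Levi $M$,
\[
\frac{1}{\vol(X(N))}\,J_{\spec,M}(\psi_R h_t^{\tau,p}\otimes\chi_{K(N)})
\ \ll_{\varepsilon}\ \|(\Id+\Delta_G)^k(\psi_R h_t^{\tau,p})\|_{L^1(G_\infty)}\,N^{(\dim M-\dim G)/2+\varepsilon},
\]
and the negative exponent of $N$ gives the uniformity in $N$ for free. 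The case $M=G$ is handled separately by a direct estimate on the discrete spectrum using \eqref{polynbd}. The $L^1$-seminorm is then bounded by $e^{-cR^2/t+C_3t}$ via Lemma~\ref{lem-est5}, essentially as in your second step. So your heat-kernel analysis is fine; the point you are missing is that the uniform-in-level control comes from the spectral side through (TWN) and (BD), not from any estimate on the geometric side.
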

Proposition \ref{prop-cptsupp} allows us to replace $h^{\tau,p}_t$ by a compactly
supported function. 
\begin{proof}
Let
$\psi_R:=1-\varphi_R$.  Then 
\[
J_{\spec}(h^{\tau,p}_t\otimes \chi_{K})- 
J_{\spec}(\widetilde h^{\tau,p}_{t,R}\otimes\chi_{K})=
J_{\spec}(\psi_R h^{\tau,p}_t\otimes\chi_{K}).
\]
Now we use the refined spectral expansion \eqref{specside2}. Let $M\in\cL$.
and let $J_{\spec,M}$ be the distribution on the right hand side of 
\eqref{specside2}, which corresponds to $M$. Assume that $M\neq\bG$. 
We note that
$\psi_R h_t^{\tau,p}\otimes\chi_{K}$ belongs to $\Co(G(\A)^1)_\cT$ and 
\cite[Prop. 4.4]{FL3} extends to $\Co(G(\A)^1)_\cT$. 
We recall that by
\cite[Proposition]{FL3} the set of measures $\{\mu^M_{K_M^\gamma}\}$, where $K$
ranges over open subgroups of $\bK_0$ and $\gamma\in\bG(\A_f)$, is polynomial
bounded. Using \cite[Proposition 4.4]{FL3} and \cite[Lemma 4.6]{FL3} it follows
that there exist $k\in\N$ and $\delta>0$ such that
\begin{equation}\label{estim-JM}
J_{\spec,M}(\psi_R h_t^{\tau,p}\otimes\chi_K)\ll_\cT \vol(K)\|\psi_R h_t^{\tau,p}\|_k
\level(K;G_M^+)^{-\delta}
\end{equation}
for all $t>0$, $p=0,...,d$, and open subgroups $K\subset\bK_0$. The norm
$\|\cdot\|_k$ is defined in \cite[Sect. 3.1]{FL3}. Let $\nabla$ be the
canonical connection on $\bG(\R)^1$. There exists $C>0$ such that
the norm
\[
\|h\|_k\le C\sum_{l=0}^k\|\nabla^l h\|_{L^1(G(\R)^1)}, \quad h\in \Co(G(\A)^1).
\]
Using Lemma \ref{lem-est5} we can proceed exactly as in the
proof of Proposition 7.2 in \cite{MM2} and estimate 
$\|\psi_R h^{\tau,p}_t)\|_k$. Combined with 
\eqref{estim-JM} it follows that there exist $C_1,C_2,C_3>0$ such that
\begin{equation}\label{estim-JM1}
\frac{1}{\vol(X(K))}|J_{\spec,M}(\psi_R h^{\tau,p}_t\otimes\chi_{K})|\le 
C_1 e^{-C_2R^2/t+C_3t}\level(K;G_M^+)^{-\delta}
\end{equation}
for all open subgroups $K\subset \bK_0$, $p=0,\dots,d$, $t>0$. and $R\ge 1$. 

It remains to deal with the case $M=G$. In this case we have 
\[
\begin{split}
J_{\spec,G}(\psi_R h_t^{\tau,p}\otimes\chi_{K})&=\sum_{\pi\in\Pi_{\di}(G(\A)^1)} m_\pi
\Tr\pi(\psi_R h_t^{\tau,p}\otimes\chi_{K})\\
&=\sum_{\pi\in\Pi_{\di}(G(\A)^1)} m_\pi\dim(\pi_f^{K})
\Tr\pi_\infty(\psi_R h_t^{\tau,p}).
\end{split}
\]
If we argue as in the proof of Propostion 7.2 in \cite[pp. 335]{MM2}, we get
\[
\begin{split}
\frac{1}{\vol(X(K))}&|J_{\spec,G}(\psi_R h_t^{\tau.p}\otimes\chi_{K})|\\
&\le
C_k e^{-C_2R^2/t+C_3t}\vol(K)\sum_{\pi\in\Pi_{\di}(G(\A)^1)^\cT} m_\pi\dim(\pi_f^{K})
(1+|\lambda_{\pi_\infty}|)^{-k}.
\end{split}
\]
By \cite[Proposition 4.7]{FL3}, the collection of measures 
$\{\mu_{K}^G\}_{N\in\N}$, where $K$ ranges over open subgroups of $\bK_0$,
 is polynomially bounded hence we have \eqref{est-spec-M2}, which also 
holds for $M=\bG$. Hence there exist $C_1,C_2,C_3>0$ such 
that
\[
\frac{1}{\vol(X(K))}|J_{\spec,G}(\psi_R h^{\tau,p}_{t}\otimes\chi_{K})|\le 
C_1 e^{-C_2R^2/t+C_3t}
\]
for all open subgroups $K\subset\bK_0$, $t>0$, $p=0,\dots,d$, and $R\ge 1$. 
This completes the proof of the proposition.
\end{proof}
Now recall that
\[
\Tr_{\reg}\left(e^{-t\Delta_{p,K}(\tau)}\right)=J_{\spec}(h_t^{\tau,p}\otimes\chi_{K}).
\]
For $R>0$ let $\varphi_R\in C^\infty_c(G(\R)^1)$ be the function defined by
\eqref{phiR}.
By
Proposition \ref{prop-cptsupp} we have
\begin{equation}\label{cptsupp2}
\Tr_{\reg}\left(e^{-t\Delta_{p,K}(\tau)}\right)=
J_{\spec}(\varphi_R h_t^{\tau,p}\otimes\chi_{K})+r_R(t),
\end{equation}
where $r_R(t)$ is a function of $t\in [0,T]$ which satisfies
\begin{equation}\label{r-estim}
\frac{1}{\vol(X(K))}|r_R(t)|\le C_1 e^{-C_2R^2/t+C_3t}
\end{equation}
for $0\le t\le T$. This implies that $\int_0^T r_R(t) t^{s-1}dt$ is holomorphic 
in $s\in\C$ and 
\[
FP_{s=0}\left(\frac{1}{s\Gamma(s)} \int_0^Tr_R(t) t^{s-1}dt\right)= 
\int_0^Tr_R(t) t^{-1}dt.
\]
Moreover
\begin{equation}\label{est-remain}
\begin{split}
\frac{1}{\vol(X(K))}\left|\int_0^T r_R(t)t^{-1}dt\right|&\le 
C_1\int_0^T e^{-C_2R^2/t+C_3t} t^{-1} dt\\
&\le C_1 e^{-C_4R^2/T+C_3T} \int_0^{T/R^2} e^{-C_4/t}t^{-1} dt.
\end{split}
\end{equation}
Now put $R=T^2$ and let 
\begin{equation}\label{cpt-supp4}
h^{\tau,p}_{t,T}:=\varphi_{T^2} h_t^{\tau,p}.
\end{equation}
Then it follows from \eqref{cptsupp2} and \eqref{est-remain}
that there exist $C,c>0$ such that
\begin{equation}\label{cpt-supp3}
\begin{split}
\frac{1}{\vol(X(K))}\biggl|\FP_{s=0}&\left(\frac{1}{s\Gamma(s)}\int_0^T
\Tr_{\reg}\left(e^{-t\Delta_{p,K}(\tau)}\right)t^{s-1}dt\right)\\
&-\FP_{s=0}\left(\frac{1}{s\Gamma(s)}
\int_0^TJ_{\spec}(h_{t,T}^{\tau,p}\otimes\chi_{K})t^{s-1}dt\right)\biggr|
\le C e^{-cT}
\end{split}
\end{equation}
for $T\ge 1$, $p=0,\dots,d$, and open subgroups $K\subset \bK_0$. 
Using the trace formula, we are reduced to deal with
\[
\FP_{s=0}\left(\frac{1}{s\Gamma(s)}
\int_0^TJ_{\geo}(h_{t,T}^{\tau,p}\otimes\chi_{K})t^{s-1}dt\right).
\]
Now we fix $T>1$. 
Let $\varphi\in C_c^\infty(G(\R)^1)$ be such that $\varphi(g)=1$ in a 
neighborhood of $1\in G(\R)^1$ and $\supp\varphi\subset B_{T^2}$. Put 
\[
\widetilde h_t^{\tau,p}=\varphi h_t^{\tau,p}.
\]
We consider test functions with $\widetilde h_t^{\tau,p}$ at the infinite place
and $\chi_{K_j}$ at the finite places. By Proposition 
\eqref{prop:replace:geom:unip} we have
\begin{equation}\label{unip4}
J_{\geo}(\widetilde h_t^{\tau,p}\otimes\chi_{K_j})
=J_{\unip}(\widetilde h_t^{\tau,p}\otimes\chi_{K_j})
\end{equation}
for all $K_j$ provided we choose the support of $\widetilde h_t^{\tau,p}$ 
sufficiently small (this choice depends on $\bK_0$ only).

In order to deal with the unipotent contribution we need to restrict the 
sequences of subgroups as described in the introduction. We fix a finite 
set $S_1$ of primes.
Let $\{K_j\}_{j\in\N}$ be sequence of open subgroups of $\bK_0$ satisfying 
$K_j\underset{S_1}{\rightarrow} 1$. Recall that this means that
 $K_j=K_{S_1,j}\times K^{S_1}$ with $K^{S_1}=\prod_{p\not\in S_1}K_p$ a fixed open 
compact subgroup of $\bG(\A^{S_1})$ and
$K_{S_1,j}=\prod_{p\in S_1}K_{p,j}$ with $K_{S_1,j}
\underset{j\to\infty}{\rightarrow} 1$. Let $S_0\subset S_{\fin}\setminus S_1$ 
such that $K_p=\bK_p$ for $p\in S_{\fin}\setminus(S_0\cup S_1)$. Thus
\[
K^{S_1}=\prod_{p\in S_0}K_p\times \prod_{p\in S_{\fin}\setminus(S_0\cup S_1)}\bK_p.
\]
We note that $\{K_j\}_{j\in\N}$ is a non-degenerate sequence in the
sense defined above. Put
\[
 S:=\{\infty\}\cup S_0\cup S_1.
\]
Note that $K_j=\prod_v K_v$ with $K_v=\K_v$ for $v\not\in S$. 
Hence by the fine geometric expansion \eqref{fine-geom5} we have
\begin{equation}\label{fine-exp6}
\begin{split}
J_{\unip}(\widetilde h_t^{\tau,p}\otimes\chi_{K_j})&=
\sum_{M\in \CmL}\sum_{[u]_S\in\CmU^M_S} a^M([u]_S, S) J_M^{\bG}([u]_S, \widetilde h_t^{\tau,p}\otimes\chi_{K_j})\\
&=\vol(G(\Q)\bs G(\A)^1/K_j)\widetilde h_t^{\tau,p}(1)\\
&\quad+\sum_{(M,[u]_S)\neq (G,\{1\})} a^M([u]_S, S) J_M^{\bG}([u]_S, 
\widetilde h_t^{\tau,p}\otimes\chi_{K_j}).
\end{split}
\end{equation}
Concerning the volume factor in the first summand on the right hand side,
 we used that $\chi_{K_j}=\1_{K_j}/\vol(K_j)$. To deal with the first 
term on the right hand side, we note that $\widetilde h_t^{\tau,p}(1)=
h_t^{\tau,p}(1)$. Furthermore, by
\cite[(5.11)]{MP2} there is an asymptotic expansion
\begin{equation}\label{asymp-exp}
h_t^{\tau,p}(1)\sim \sum_{j=0}^\infty a_j t^{-d/2+j}
\end{equation}
as $t\to 0$, and by \cite[(5.16)]{MP2} there exists $c>0$ such that
\begin{equation}\label{larget3}
h_t^{\tau,p}(1)=O(e^{-ct})
\end{equation}
as $t\to\infty$. From \eqref{asymp-exp} and \eqref{larget3} follows that the 
integral
\begin{equation}
\int_0^\infty h_t^{\tau,p}(1) t^{s-1}dt
\end{equation}
converges in the half-plane $\Re(s)>d/2$ and admits a meromorphic extension
to $\C$ which is holomorphic at $s=0$. The same is true for the integral 
over $[0,T]$ and we get
\begin{equation}\label{identcontr}
\FP_{s=0}\left(\frac{1}{s\Gamma(s)}\int_0^T h_t^{\tau,p}(1) t^{s-1}\right)=
\frac{d}{ds}\left(\frac{1}{\Gamma(s)}\int_0^\infty h_t^{\tau,p}(1) t^{s-1}\right)
\bigg|_{s=0}+O(e^{-cT}).
\end{equation}
Now recall the definition of the $L^{2}$-analytic torsion \cite{Lo}, \cite{MV}.
For $t>0$ let 
\[
K^{(2)}(t,\tau):=\sum_{p=1}^d (-1)^p p h_t^{\tau,p}(1).
\]
Put
\[
t^{(2)}_{\widetilde X}(\tau):=\frac{1}{2}\frac{d}{ds}\left(\frac{1}{\Gamma(s)}
\int_0^\infty K^{(2)}(t,\tau) t^{s-1} dt\right)\bigg|_{s=0}.
\]
To summarize, we get
\begin{equation}\label{l2-tor1}
\frac{1}{2}\sum_{p=1}^d (-1)^p p \FP_{s=0}\left(\frac{1}{s\Gamma(s)}
\int_0^T h_t^{\tau,p}(1) t^{s-1} dt\right)=t^{(2)}_{\widetilde X}(\tau)+ O(e^{-cT})
\end{equation}
for $T\ge 1$. We observe that by \cite[(5.20)]{MP2}, the $L^2$-analytic torsion 
$T^{(2)}_{X(K_j)}(\tau)\in\R^+$ is given by
\begin{equation}\label{l2-torsion}
\log T^{(2)}_{X(K_j)}(\tau)=\vol(X(K_j))\cdot t^{(2)}_{\widetilde X}(\tau).
\end{equation} 
Next we consider the weighted orbital integrals on the right hand side of
\eqref{fine-exp6}. Note that by definition of $\chi_{K_j}$ we have
\[
J_M^G([u]_S,\widetilde h^{\tau,p}_t\otimes \chi_{K_j})=\frac{1}{\vol(K_j)}
J_M^G([u]_S,\widetilde h^{\tau,p}_t\otimes \1_{K_j}).
\]
To deal with the integral on the right hand side, we use the splitting
formula \eqref{eq:splitting}. Put $S_{f}:=S\setminus\{\infty\}=S_0\cup S_1$.
Then we get
\begin{equation}\label{splitting5}
J_M^{\bG}([u]_S, \widetilde h_t^{\tau,p}\otimes\chi_{K_j})=
\sum_{L_1,\,L_2\in\CmL(M)} d_M^{\bG}(L_1,L_2) J_{M}^{L_1}([u]_{\infty}, \widetilde 
h^{\tau,p}_{t,Q_1}) J_M^{L_2}([u]_{S_{f}}, (\One_{K_j})_{S_{f},Q_2}).
\end{equation}
Using \eqref{equ-kernel2} and Proposition \eqref{prop-asympt-exp}, it follows 
that as $t\to 0$, $J_{M}^{L_1}([u]_{\infty}, \widetilde h^{\tau,p}_{t,Q_1})$ has an 
asymptotic expansion  of the form \eqref{asympt-exp7}. This implies that the
integral
\begin{equation}
\int_0^T J_{M}^{L_1}([u]_{\infty}, \widetilde h^{\tau,p}_{t,Q_1})t^{s-1} dt
\end{equation}
converges absolutely and uniformly on compact subsets of the half plane
$\Re(s)>d/2$ and admits a meromorphic extension to $s\in\C$. Put
\begin{equation}
A^{L_1}_M([u]_\infty,T):=\FP_{s=0}\left(\frac{1}{s\Gamma(s)}
\int_0^T J_{M}^{L_1}([u]_{\infty}, \widetilde h^{\tau,p}_{t,Q_1})t^{s-1} dt\right).
\end{equation}
By \eqref{splitting5} it follows that the Mellin transform of
$J_M^{\bG}([u]_S, \widetilde h_t^{\tau,p}\otimes\chi_{K_j})$ as a function of $t$
is a meromorphic function on $\C$, and we get
\begin{equation}\label{splitting6}
\begin{split}
\FP_{s=0}&\left(\frac{1}{s\Gamma(s)}\int_0^T J_M^{\bG}([u]_S, 
\widetilde h_t^{\tau,p}\otimes\chi_{K_j)})t^{s-1} dt\right)\\
&=\sum_{L_1,\,L_2\in\CmL(M)} d_M^{\bG}(L_1,L_2)A^{L_1}_M([u]_\infty,T)
J_M^{L_2}([u]_{S_{f}}, (\One_{K_j})_{S_{f},Q_2}),
\end{split}
\end{equation}
where $(\One_{K_j})_{S_{f},Q_2}$ is defined by \eqref{auxil-fct}. Next we deal with
the orbital integral on the right hand side. Let $L\in\cL$ and let $Q=LV$ be 
a semistandard parabolic subgroup. We first consider the case that the $K_j$ are principal congruence subgroups $K(N_j)$ with $N_j\longrightarrow\infty$ as $j\rightarrow\infty$.  
Using the definition \eqref{auxil-fct} and the fact that $K(N_j)$ is a normal 
subgroup of $\K_f$, we have
\[
\One_{K(N_j)_{S_{f}},Q}(m)=\delta_Q(m)^{1/2}\vol(\K_{S_\fin})\int_{V(\Q_{S_\fin})}\One_{K(N_j)_{S_\fin}}
(mv)dv
\]
for any $m\in L(\A_f)$. Hence $\One_{K(N_j)_{S_{f}},Q}(m)=0$ unless $m\in
K^L(N_j)_{S_\fin}=K(N_j)_{S_\fin}\cap L(\A_f)$. Now if $m\in K^L(N_j)_{S_\fin}$, we have 
$mv\in K(N_j)_{S_\fin}$ if and only if $v\in K(N_j)_{S_\fin}$. Hence
\[
\One_{K(N_j)_{S_{f}},Q}(m)=\vol(\K_{S_\fin})\One_{K^L(N_j)_{S_\fin}}(m)
\]
which converges to $0$ for any $m\neq 1$. 

We now go back to the general case of a sequence of open compact subgroups $K_j$ of $\K_f$ that converges to $1$ at $S_\fin$. Let $N_S$ denote the product of     all primes in $S_\fin$. By definition, for each $m\ge0$, there exists $J_m$ such that $K_j\subseteq K(N_S^m)$ for all $j\ge J_m$. Using the normality of $K(N_S^m)$ again, we also have $\{k^{-1}xk\mid k\in \K,\, x\in K_j\}\subseteq K(N_S^m)$ for all $j\ge J_m$. Using the computation for the principal congruence subgroups, we can therefore conclude that $\One_{K_{j,S_{f}},Q}$ also converges pointwise to $0$ away from $1$.
 Now we argue as in \cite[Lemma 7]{Cl} to conclude that for $[u]_{S_\fin}\neq 1$,
\begin{equation}
\lim_{j\to\infty}J^L_M([u]_{S_{f}}, \One_{K{j,S_{f}}})=0.
\end{equation}
Denote by $J_{\unip-\{1\}}(\widetilde h_t^{\tau,p}\otimes \chi_{K_j})$ the sum
on the right hand side of \eqref{fine-exp6}. Combining our results, we have proved

\begin{lem}\label{lem:unip-conv}
For every $T\ge 1$ we have
\[
\frac{1}{\vol(X(K_j))}\FP_{s=0}\left(\frac{1}{s\Gamma(s)}\int_0^T
J_{\unip-\{1\}}(\widetilde h_t^{\tau,p}\otimes \chi_{K_j})t^{s-1}dt\right)
\to 0
\]
as $j\to\infty$.
\end{lem}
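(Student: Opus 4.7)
The plan is to combine the fine geometric expansion \eqref{fine-exp6} with the splitting formula \eqref{splitting5}--\eqref{splitting6} to reduce the claim to the vanishing of the normalized finite-place orbital integrals as $j\to\infty$. Since the support of $\widetilde h_t^{\tau,p}\otimes\chi_{K(N_j)}$ is contained in a compact set whose archimedean factor does not depend on $j$, only finitely many pairs $(M, [u]_S) \neq (G, \{1\})$ contribute to $J_{\unip-\{1\}}$, and I may interchange the finite sum with $\FP_{s=0}$. By Proposition \ref{prop-asympt-exp}, the Mellin transform $\int_0^T J_M^{L_1}([u]_\infty, \widetilde h_{t,Q_1}^{\tau,p})\,t^{s-1}\,dt$ extends meromorphically to $\C$, so the archimedean finite part $A_M^{L_1}([u]_\infty, T)$ is well defined and independent of $j$. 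The normalization $\vol(X(N_j))^{-1}$ combined with the $\vol(K(N_j))^{-1}$ hidden inside $\chi_{K(N_j)}$ yields the constant $\vol(G(\Q)\bs G(\A)^1)^{-1}$, so the problem reduces to proving that
\[
\sum_{(M, [u]_S) \neq (G, \{1\})} a^M([u]_S, S)\sum_{L_1, L_2} d_M^G(L_1, L_2)\, A_M^{L_1}([u]_\infty, T)\, J_M^{L_2}([u]_{S_f}, \One_{K^{L_2}(N_j)_{S_f}}) \;\longrightarrow\; 0.
\]

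Every pair contributing with $u \neq 1$ satisfies $[u]_{S_f}\neq 1$, since a nontrivial unipotent element cannot be conjugate to the identity in any group. For such pairs, the argument of Clozel \cite[Lemma~7]{Cl}, applied with the semistandard Levi $L_2$ in place of $G$, gives $J_M^{L_2}([u]_{S_f}, \One_{K^{L_2}(N_j)_{S_f}}) \to 0$ as $j\to\infty$; since $a^M([u]_S, S)$ and $A_M^{L_1}([u]_\infty, T)$ are fixed scalars, these contributions vanish in the limit. The remaining terms in the sum are those with $u = 1$ and $M \ne G$. For these, $[u]_{S_f} = 1$, and the orbital integral $J_M^{L_2}([1]_{S_f}, \One_{K^{L_2}(N_j)_{S_f}})$ can be computed explicitly as a linear combination of volumes of stabilizers and Haar volumes of $K^{L_2}(N_j)_{S_f}$ with coefficients depending only on $M, L_2, Q_2$ and logarithmic weight factors.

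To close the argument for the remaining proper-Levi, trivial-class terms, I would combine two facts: first, $\vol(K^{L_2}(N_j)_{S_f}) \to 0$ as $j\to\infty$ (since $K_{S,j}\underset{S}{\to}1$, and $2\not\in S$ allows the standard volume estimates to apply); second, the global coefficients $a^M([1]_S, S)$ for the trivial unipotent class admit an Arthur-type explicit formula whose growth in $S$ is at most polynomial in $\log[K(1) : K(N_j)_{S_f}]$. These two together show that the total $S_f$-factor for the trivial class of a proper Levi is $O\bigl(\vol(K(N_j)_{S_f})\cdot(\log\vol(K(N_j)_{S_f})^{-1})^C\bigr)$, which tends to zero. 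Multiplying by the bounded archimedean factor $A_M^{L_1}([1]_\infty, T)$ and summing over the finitely many proper Levis yields the lemma. The main obstacle is precisely this last step: obtaining uniform control of $a^M([1]_S, S)$ as $N_j$ varies, which requires the explicit evaluation of Arthur's global coefficients at the identity class and is the reason for restricting to sequences of principal congruence subgroups and to admissible groups. With this control in hand, the argument parallels \cite[\S 7--8]{MM2}.
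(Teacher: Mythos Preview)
Your overall structure mirrors the paper's proof: fine geometric expansion, splitting formula, then Clozel's Lemma~7 for the non-identity classes. Where you go wrong is in the final paragraph. You assert that the ``main obstacle'' is to obtain uniform control of the global coefficients $a^M([1]_S,S)$ as $N_j$ varies, and you attribute both the restriction to principal congruence subgroups converging at $S$ and the restriction to admissible groups to this difficulty. This misreads the set-up. The whole point of requiring $K(N_j)\underset{S_1}{\to}1$ is that the set $S=\{\infty\}\cup S_0\cup S_1$ is \emph{fixed} once and for all (see the paragraph preceding \eqref{unip4}); consequently the finite family of coefficients $a^M([u]_S,S)$ appearing in \eqref{fine-exp6} are absolute constants independent of $j$. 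No estimate of their growth is required or used. As the introduction makes explicit, the restriction on the sequence is imposed precisely to \emph{avoid} having to control these coefficients, since very little is known about them outside $\GL(n)$. Likewise, the admissibility hypothesis on $G$ enters only on the spectral side (Proposition~\ref{prop-twn-bd}, properties (TWN) and (BD)), not here.

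Once you recognise that $a^M([1]_S,S)$ is a constant, the treatment of the terms $(M,[1]_S)$ with $M\neq G$ becomes much simpler than you make it. After the splitting formula, the finite-place factor is $J_M^{L_2}([1]_{S_f},\One_{K(N_j)_{S_f},Q_2})$; the constant-term computation just before the lemma shows this is (up to a fixed multiplicative constant) $J_M^{L_2}([1]_{S_f},\One_{K^{L_2}(N_j)_{S_f}})$ times a factor involving the volume of $K(N_j)_{S_f}\cap V_{Q_2}$. For $L_2\neq M$ the induced class in $L_2$ is a non-trivial unipotent orbit, so Clozel's argument applies verbatim; for $L_2=M$ the volume factor coming from integration over $N_{Q_2}(\Q_{S_f})$ already forces the term to zero since $K(N_j)_{S_f}\to 1$. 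Either way, there is no need for the polynomial-in-$\log$ bound you propose; the paper simply invokes \cite[Lemma~7]{Cl} uniformly. Your argument is salvageable, but strip out the discussion of growth of global coefficients and the remark about admissibility --- these are both based on a misunderstanding of why the hypotheses are there.
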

Now we can complete the proof of Theorem \ref{theo-approx}. Let
\[
  K_j(t,\tau):=\frac{1}{2}\sum_{k=1}^d (-1)^kk \Tr_{\reg}
  \left(e^{-t\Delta_{k,j}(\tau)}\right)
\]
where the $\Delta_{k,j}(\tau)$ denotes the the twisted Laplace operator on $k$ forms with values in $T_\tau$ over $X(K_j)$.  
Let $T>0$. By \eqref{analtor} and \eqref{zetafct} we have
\begin{equation}\label{int-decomp}
\log T_{X(K_j)}(\tau)=\FP_{s=0}\left(\frac{1}{s\Gamma(s)}\int_0^T K_j(t,\tau) t^{s-1}dt\right)+\int_T^\infty K_j(t,\tau)t^{-1} dt.
\end{equation}
By \eqref{est-larget3} there exist $C,c>0$ such that
\begin{equation}\label{est-larget7}
\frac{1}{\vol(X(K_j)}\left|\int_T^\infty K_j(t,\tau)t^{-1} dt\right|\le C e^{-cT}
\end{equation}
for all $T\ge 1$ and $n\in\N$. 

Now we turn to the first term on the right hand side. Let $h_{t,T}^{\tau,p}\in 
C_c^\infty(G(\R)^1)$ be defined by \eqref{cpt-supp4}. Put
\[
K_j(t,\tau;T):=\frac{1}{2}\sum_{k=1}^d (-1)^k k 
J_{\geo}(h_{t,T}^{\tau,k}\otimes \chi_{K_j}).
\]
Using \eqref{cpt-supp3} and the trace formula, it follows that, up to a term of 
order $O(e^{-cT})$, we  can replace $K_j(t,\tau)$ by $K_j(t,\tau,T)$ in the
first term on the right hand side of \eqref{int-decomp}.  Let
\begin{equation}\label{cpt-supp5}
K_{\unip-\{1\},j}(t,\tau;T):=\frac{1}{2}\sum_{p=1}^d (-1)^p p 
J_{\unip-\{1\}}(h_{t,T}^{\tau,p}\otimes \chi_{K_j}).
\end{equation}
By \cite[Lemma 5]{Cl}  and \eqref{fine-exp6} it follows that for every 
$T\ge 1$ there exists $J_0(T)\in\N$ such that 
\[
K_j(t,\tau;T)=\frac{\vol(X(K_j))}{2}\sum_{k=1}^d (-1)^k k h_{t,T}^{\tau,k}(1)
+K_{\unip-\{1\},j}(t,\tau;T)
\]
for $j\ge J_0(T)$. Using \eqref{l2-tor1} it follows that for every $T\ge 1$ 
there exists $J_0(T)\in\N$ such that
\begin{equation}\label{eq:l2-tor}
\begin{split}
\frac{1}{\vol(X(K_j))}&\FP_{s=0}\biggl(\frac{1}{s\Gamma(s)}
\int_0^T K_j(t,\tau) t^{s-1}dt\biggr)\\
&=t^{(2)}_{\widetilde X}(\tau)+\frac{1}{\vol(X(K_j))}\FP_{s=0}
\biggl(\frac{1}{s\Gamma(s)}\int_0^T K_{\unip-\{1\},j}(t,\tau) t^{s-1}dt\biggr)\\
&\hskip10pt+O(e^{-cT})
\end{split}
\end{equation}
for $j\ge J_0(T)$. 
For the last step we have to specialize to sequences of groups $\{K_j\}_{j\in\N}$ which converge to $1$ at $S_1$.
Applying Lemma \ref{lem:unip-conv} we get that for
every $T\ge 1$ and $\varepsilon>0$ there exist constants $C_1(T), C_2, c>0$ 
and $J_0(T)\in\N$ 
such that
\begin{equation}\label{est-l2-tor2}
\begin{split}
\biggl|\frac{1}{\vol(X(K_j))}\FP_{s=0}\biggl(\frac{1}{s\Gamma(s)}
\int_0^T K_j(t,\tau)t^{s-1}dt\biggr)-t^{(2)}_{\widetilde X}(\tau)\biggr|
\le C_1(T) \varepsilon+C_2 e^{-cT}
\end{split}
\end{equation}
for $j\ge J_0(T)$. Combined with \eqref{int-decomp}, \eqref{est-larget7} and
\eqref{est-larget3}, Theorem \ref{theo-approx} follows.

\subsection{Proof of Corollary \ref{cor:sl3}}

Let $G=\SL(3)$. We proceed as in the proof of Theorem \ref{theo-approx}.
 The starting point is the decompositon
\eqref{splitt-at} of the Mellin transform of the regularized trace of the
heat operator. Concerning the second integral, the estimation 
\eqref{est-larget3} holds in general. We only have to deal with the first
integral. As above, using \eqref{cpt-supp3} and \eqref{unip4}, this can be 
reduced to the consideration of the unipotent contribution. 

To this end we use the description of the unipotent contribution to the 
geometric side of the trace formula given in \cite[Appendix B]{HW}. To begin
 with we need to introduce some notation.

For $z\in Z(\Q)=\{\pm I_3\}$ let $\of_z$ be the $\cO$-equivalence class in
$G(\Q)$, and for $x\in G(\Q)$ denote by $\{x\}_G$ the $G(\Q)$-conjugacy class 
of $x$. The set $\of_z$ is divided into three classes
\begin{equation}
\of_z=O_{\tri,z}\cup O_{\min,z}\cup O_{\reg,z}, 
\end{equation}
where $O_{\tri,z}=\{z\}$,
\begin{equation}\label{unip-conj-classes}
O_{\min,z}=\{z u(0,1,0)\}_G,\quad\text{and}\quad 
O_{\reg,z}=\bigcup_{\alpha\in\Q^\times/(\Q^\times)^3}\{z u(1,0,\alpha)\}_G
\end{equation}
(cf.\ \cite[\S 3]{Hales}).
For $T_0\in\af_0^+$ and $f\in C^\infty_c(G(\A)^1)$ let $J^{T_0}_{O_{\tri,z}}(f)$,
$J^{T_0}_{O_{\min,z}}(f)$, and $J^{T_0}_{O_{\reg,z}}(f)$ be the integrals defined in
\cite[Appendix A.3]{HW}. Then the unipotent contribution $J^{T_0}_{\of_z}(f)$ is
given by
\begin{equation}\label{unip-sl3}
J^{T_0}_{\of_z}(f)=J^{T_0}_{O_{\tri,z}}(f)+J^{T_0}_{O_{\min,z}}(f)+J^{T_0}_{O_{\reg,z}}(f).
\end{equation}
Let $S$ be a finite set of places of $\Q$
containing the Archimedean place. For a character $\chi=\prod_v \chi_v$  of
$\Q^\times\bs\A^1$, where $\chi_v$ is a character
of $\Q^\times_v$, which is unramified outside of $S$ let 
\begin{equation}
L^S(s,\chi):=\prod_{p\not\in S} L_p(s,\chi_p)
\end{equation} 
be the partial Hecke $L$-function outside of $S$. It is well known that
$L^S(s,\chi)$ is absolutely convergent and holomorphic for $\Re(s)>1$, and
admits a meromorphic extension to the entire complex plane, which in fact is entire when $\chi$ is not equal to the trivial character $\chi_0$ on $\Q^\times\backslash \A^1$.
If $S=\{\infty\}$, we write $L(s,\chi)$ for $L^S(s,\chi)$. For $\chi=\chi_0$ we further write  $\zeta^S(s):=
L^S(s,\chi_0)$ so that $\zeta(s)=\zeta^{\{\infty\}}(s)$ coincides with the Riemann zeta function.

Now we are ready to describe the global coefficients.
Let  
\[
L^S(s,\chi)
= \alpha_{-1}^S(\chi) (s-1)^{-1} + \alpha_0^S(\chi) + \alpha_1^S(\chi) (s-1) + \ldots
\]
be the Laurent expansion of $L^S(s,\chi)$ around $s=1$. We write $\alpha_k^S= \alpha_k^S(\chi_0)$ if $\chi=\chi_0$ is the trivial character. Note that $\alpha_{-1}^S(\chi)=0$ if $\chi\neq \chi_0$. For a semistandard Levi subgroup $H$ of $\SL(3)$ denote by $\vol_H$ the volume of $H(\Q)\bs H(\A)^1$. 

Let
\[
u(x_{12},x_{13},x_{23}):=\begin{pmatrix} 1&x_{12}&x_{13}\\0&1&x_{23}\\0&0&1
\end{pmatrix},\quad x_{12},x_{13},x_{23}\in\R.
\]
For abbreviation put
\[
\nu:=u(0,1,0)\quad\text{and}\quad \mu_x:=u(1,0,x).
\]
Let $M_0$ be the standard minimal Levi subgroup and $M$ the $(2,1)$ standard
Levi subgroup of $\SL(3)$.
The corresponding global coefficients then are according to \cite[Appendix B]{HW} given by 
\[
a^{\SL(3)}(S, \nu)= \vol_M\zeta^S(2)^{-1} (\zeta^S)'(2)
\]
and
\[
 a^{\SL(3)}(S, \mu_x) =  \frac{\vol_{M_0}}{3\alpha_1^2}\left( \alpha_1^S \alpha_{-1}^S + \sum_{\chi\neq \chi_0} \chi(x) \alpha_0^S(\chi) \alpha_0^S(\chi^{-1}) \right)
\]
where the sum runs over all cubic characters $\chi:\Q^{\times}\backslash\A^1\longrightarrow\C^\times$ that are unramified outside of $S$. Next we describe the integrals
on the right hand side of \eqref{unip-sl3}. For the first one we have 
$J^{T_0}_{O_{\tri,z}}(f)=\vol_G f(z)$. Let 
\begin{equation}\label{weight-orb-int}
J_{M}^{T_0}(z,f),\; J_G(z u(0,1,0),f),\;J^{T_0}_{M_0}(z,f),\;\text{and}\;
J^{T_0}_M(z u(1,0,0),f)
\end{equation}
be the weighted orbital integrals defined in \cite[Appendix A.3]{HW} which coincide with Arthur's weighted orbital integrals. Let 
$f\in C_c^\infty(G(\Q_S))$.  By \cite[Theorem B.1]{HW} the integral 
$J^{T_0}_{O_{\min,z}}(f)$ converges absolutely and we have
\begin{equation}\label{orb-int-sl3}
J^{T_0}_{O_{\min,z}}(f)=\vol_{M} J^{T_0}_M(z,f)+ a^{\SL(3)}(S, \nu) J_G(z\nu,f),
\end{equation}
For any $x\in\Q_S$ let $J_G(zu(1,0,x),f)$ be the integral defined
in \cite[p. 85]{HW}. 
By \cite[Theorem B.2]{HW} the integral $J^{T_0}_{O_{\reg,z}}(f)$ converges
absolutely and we have
\begin{equation}\label{orb-int-sl3-1}
\begin{split}
J^{T_0}_{O_{\reg,z}}(f)=&\frac{\vol_{M_0}}{6} J^{T_0}_{M_0}(z,f)+
\frac{\vol_{M_0}}{2} \alpha_0^S J^{T_0}_M(z\mu_0.f)\\
&+\frac{\vol_{M_0}}{3}\sum_{x\in\Q^\times/(\Q^\times\cap(\Q^\times_S)^3)}
a^{\SL(3)}(S,\mu_x) J_G(z\mu_x,f).
\end{split}
\end{equation}
The global coefficients are estimated by the following lemma. 
\begin{lem}\label{lem:coeff}
There exists a constant $c>0$, and for each $\epsilon>0$ a constant $c_\epsilon>0$, both $c$ and $c_\epsilon$ independent of $S$ and $x$, such that 
\[
\left| a^{\SL(3)}(S, \nu)\right| \le c
\]
and
\[
 a^{\SL(3)}(S, \mu_x)
\le c_\epsilon N_S^\epsilon \left(\log N_S\right)^2
\]
where $N_S=\prod_{p\in S_\fin} p$.
\end{lem}
\begin{proof}
The first assertion immediately follows from the fact that $\zeta^S(2)^{-1} (\zeta^S)'(2)$ is bounded from above independently of $S$. 

For the second assertion using standard estimates for the derivatives of Hecke $L$-functions at $1$ (see, e.g., \cite[Theorem 8.18]{Tenenbaum}) one can easily establish that for every Hecke character $\chi$ of $\Q$ that is unramified outside of $S$, and every $k\in\{-1,0,1\}$, 
\[
\alpha_k^S(\chi) \le c (\log N_S)^{k+1}
\]
where $c>0$ is a suitable constant that does not depend on $S$ or $\chi$ and $N_S:=\prod_{p\in S} p$. Finally, the number of cubic Hecke characters of $\Q$ that are unramified outside of $S$ is bounded from above by $9^{|S_\fin|}$ since $|\Q_p^\times/(\Q_p^\times)^3|\le 9$.
\end{proof}
Now we have to deal with the weighted orbital integrals with test function
$f:=\widetilde h_t^{\tau,p}\otimes \chi_{K_j}$. Except for
$J_G(z\mu_x,\widetilde h_t^{\tau,p}\otimes \chi_{K_j})$, the orbital integrals are
the same as the corresponding integrals for $\GL(3)$. These integrals can be 
treated as in \cite{MM2}. Using the splitting formula \eqref{eq:splitting}, 
we are reduced to the study of weighted orbital integrals of the form
$J_M^L(u,{\bf 1}_{K_p})$, for Levi subgroups $M\subseteq L\subseteq G$, $u\in M(\Q)$ and $p$ any prime. The remaining integral 
$J_G(z\mu_x,\widetilde h_t^{\tau,p}\otimes \chi_{K_j})$ does not contain any 
weight factor and can be dealt with directly. The integrals 
$J_M^L(u,{\bf 1}_{K_p})$ are estimated by the following lemma.
\begin{lem}\label{lem:weighted:orb}
There exist $c, R>0$ such that
for all Levi subgroups $M\subseteq L\subseteq G$ and all unipotent elements $u\in M(\Q)$ with $(u,M)\neq (1,L)$, we have
\[
J_M^L(u,1_{K_p})
\le c p^{-R}
\] 
for every prime $p$
\end{lem}
\begin{proof}
This can easily be seen from the explicit description of the weighted orbital integrals in \cite[Appendix A and B]{HW}. In fact, most weighted orbital integrals are the same as in the $\GL(3)$ case so that the estimate for those follows from \cite{MM2}. The only remaining integrals are those described just before  \cite[Theorem B.2]{HW} and do not contain any logarithmic weight functions. It is easily seen from their explicit description in \cite{HW} that the asserted bound also holds for them. 
\end{proof}

Combining Lemma \ref{lem:coeff}, Lemma \ref{lem:weighted:orb}, and the fact that $|\Q_p^{\times}/(\Q_p^\times)^3|\le 9$, Corollary \ref{cor:sl3} follows as \cite[Theorem 1.1]{MM2} in \cite[\S 10]{MM2}.

\end{document}